\documentclass{amsart}

\usepackage{mathrsfs}
\usepackage{amssymb}
\usepackage{microtype}
\usepackage{graphicx}
\usepackage{subcaption}

\usepackage{xcolor}
\usepackage{float}
\usepackage{needspace}
\usepackage[hidelinks]{hyperref}
\hypersetup{pdftitle={Weighted normalized curve shortening flow with applications in pseudo-Euclidean spaces},pdfauthor={Baichuan Hu and Xiang Ma}}
\newtheorem{theorem}{Theorem}[section]
\newtheorem{lemma}[theorem]{Lemma}

\newtheorem{corollary}[theorem]{Corollary}
\newtheorem{proposition}[theorem]{Proposition}
\theoremstyle{definition}
\newtheorem{definition}[theorem]{Definition}
\newtheorem{example}[theorem]{Example}

\theoremstyle{remark}
\newtheorem{remark}[theorem]{Remark}

\numberwithin{equation}{section}

\title[Weighted normalized curve shortening flow]{Weighted normalized curve shortening flow with applications in pseudo-Euclidean spaces}

\author{Baichuan Hu}
\address{School of Mathematical Sciences, Peking University, Beijing 100871, P.R. China}
\curraddr{}
\email{2301110016@pku.edu.cn}
\thanks{}

\author{Xiang Ma}
\address{LMAM, School of Mathematical Sciences, Peking University, Beijing 100871, P.R. China}
\curraddr{}
\email{maxiang@math.pku.edu.cn}
\thanks{Corresponding author: Xiang Ma(maxiang@math.pku.edu.cn); partially supported by NSFC grant 11831005.}

\date{}

\begin{document}

%    Abstract is required.
\begin{abstract}
The focus of this paper is the curve shortening flow for closed spacelike curves in pseudo-Euclidean spaces, which has very few results so far. Will they produce singularities where certain tangent line tends to light cone? If not, will such a curve shrink to a circular point? To answer these questions, we establish a dichotomy for planar weighted normalized curve shortening flow with uniformly positive and bounded weights. Applying to closed smooth spacelike curves in pseudo-Euclidean spaces that admit a one-to-one convex projection onto a spacelike plane, at their finite maximal time we will see: either the curve shrinks to a point and becomes asymptotically circular, or the tangent directions subsequentially approach the null cone. Both alternatives occur. In the first case, this proves our previous conjecture that a strong spacelike curve in $\mathbb{R}^{2,q}$ with index 1 will converge to a circular point under the usual CSF. In the latter case, a monotone area-bivector defect is found in $\mathbb R^{2,1}$, which gives a quantitative obstruction to point collapse. Explicit examples of spacelike curves with lightlike tangent limit are given.
\end{abstract}

\maketitle

\tableofcontents

\section{Introduction and Notations}

The curve shortening flow (CSF) has been studied for several decades. In \cite{GH86}, Gage and Hamilton proved that any strictly convex planar closed curve shrinks to a point and becomes asymptotically circular under the curve shortening flow. In \cite{Gra87}, Grayson proved that any embedded planar curve will become convex before developing any singularities under the CSF.

For space CSF in Euclidean spaces, Huisken's Monotonicity Formula (\cite{Hui90}) and Altschuler's singularity classification result (\cite{Alt91}) are important tools. Recently, Sun showed that if the initial curve has a one-to-one convex projection onto a plane, then it develops a Type \uppercase\expandafter{\romannumeral1} singularity and becomes asymptotically circular under the CSF (\cite{Sun26}). Andrews and Zhou \cite{AZ26} proved round-point convergence for compact spacelike-convex submanifolds with one spacelike codimension. In the curve case relevant here, their result applies to closed strong spacelike curves of index 1 in $\mathbb R^{2,k}$. Our convex-projection hypothesis does not require the osculating planes to be spacelike; which allows peculiar examples with null tangent limit in Section~\ref{sec:examples}.

We got interested in this topic several years ago, mainly because in $\mathbb R^{2,1}$, we wanted to establish a Fenchel-type inequality for the total curvature of a closed strong spacelike curve (which means that its tangent vector and curvature vector span a spacelike plane) with index 1. This inequality
\[\int k ds \le 2\pi\]
has a reversed direction (comparing with the Euclidean version), due to the indefinite ambient metric. Notice that the total curvature increases monotonically under the curve shortening flow in this case. If we can prove that under the flow this curve will shrink to a circular point, the reversed Fenchel inequality follows immediately. Yet the pseudo-Euclidean metric as well as the high codimension turned out to be great challenges to the usual research method. Later we proved this inequality in \cite{YMW16} using a Crofton formula, and provided another proof in \cite{YM19} by considering the spanned maximal disk and applying the Gauss-Bonnet formula. For the above results on the reversed Fenchel inequality and a preliminary study of curve shortening flow in $\mathbb R^{2,1}$, see also \cite{Ye16}.

Although the reversed Fenchel inequality is established, we have been wondering about two basic questions for the CSF in the pseudo-Euclidean case: (1) Must the flow forces a strong spacelike curve with index 1 to become circular? (2) Under weaker assumptions, which kind of spacelike curves could deform to a peculiar limit with a null tangent? (This would be a new kind of singularity besides the usual type-1 and type-2 singularities in the Euclidean context.)

This paper is a final affirmative answer to both questions. The turning point is giving up the original idea that the Lorentz metric should play an invariant and important role in the analysis. On the contrary, in this paper we identify $\mathbb{R}^{p,q}$ with the same linear space with a Euclidean metric $\mathbb{R}^{p+q}$ in a fixed coordinate system. And we weaken our assumption to that the spacelike curve $\Gamma$ projects to a spacelike plane which is a one-to-one correspondence to a convex plane curve $\gamma$. The image $P_{\Sigma}(\Gamma)$ will deform simultaneously as a weighted curve shortening flow. By a geometric way to looking at this flow we can derive the crucial dichotomy result.

Let $\Gamma(\theta, t)$ be a solution to the curve shortening flow (CSF) in $\mathbb{R}^{p,q}$
$$
\Gamma_{ t} = \Gamma_{ss}
$$
and the maximal existence time is $T_\Gamma$. We take
$$
\gamma(\theta,{\sigma}) = [2(T_\Gamma- t)]^{-1/2}\Gamma(\theta,t)
$$
where ${\sigma} = -\frac{\ln(T_\Gamma- t)}{2}+\frac{\ln T_\Gamma}{2}$. Then $\gamma$ satisfies the normalized curve shortening flow equation:
$$
\gamma_{{\sigma}} = \gamma_{{s}{s}}+\gamma.
$$
where ${s}$ denotes the arc-length parameter of $\gamma$.

Let $\Sigma$ be a spacelike plane and assume that the orthogonal projection $\hat\gamma_1=P_\Sigma\gamma$ is regular. Then
$$
(\hat \gamma_1)_{{\sigma}} = (\hat \gamma_1)_{{s}{s}}+\hat \gamma_1 = \left(\frac{d\hat {s}}{d{s}}\right)^2(\hat \gamma_1)_{\hat {s}\hat {s}} + \frac{d^2\hat {s}}{d{s}^2}(\hat \gamma_1)_{\hat {s}}+\hat \gamma_1.
$$
where $\hat {s}$ denotes the arc-length parameter of $\hat \gamma_1$.

Under a proper reparameterization (for details, see \cite{Sun24}, Lemma 3.4), we let $\hat \gamma(\delta,{\sigma}) =\hat  \gamma_1(\theta(\delta,{\sigma}),{\sigma})$ such that
$$
(\hat \gamma)_{{\sigma}} =  \left(\frac{d\hat {s}}{d{s}}\right)^2(\hat \gamma)_{\hat {s}\hat {s}}+\hat \gamma.
$$

In general, if an evolving curve $r$ satisfies
\begin{align}
    r_t = r_{ss}+r,
\end{align}
then we say $r$ is a solution to the \emph{normalized flow} (NF). More generally, if
\begin{align}
    r_t = a(\theta,t)r_{ss}+r\,,a(\theta,t)> 0
\end{align}
then we say $r$ is a solution to the \emph{weighted normalized flow} (WNF). The positive function $a(\theta,t)$ is called the \emph{weight}. The above indicates that if $r$ is a solution to the NF, then for a projection $P$, $P\circ r$ is a solution to the WNF, where the weight function is $\left(\frac{d\hat s}{ds}\right)^2$.

In what follows, we establish a dichotomy of solutions to the WNF, where the weights are positively uniformly bounded. Combining with interior estimates of parabolic equations (Theorem~\ref{Nash and Schauder estimate}), this result is pivotal in the study of singularities of the CSF. \\

\noindent\textbf{Basic notations.}

The pseudo-Euclidean space $\mathbb{R}^{p,q}$ is the $(p+q)$-dimensional real linear space endowed with the standard pseudo-Euclidean metric $\langle\cdot ,\cdot\rangle$ with index $q$. In a fixed unit orthogonal coordinate system, we always let
$$
\langle X,Y\rangle=\sum_{i=1}^{p}x_i y_i-\sum_{j=1}^{q}x_{p+j}y_{p+j},\quad X,Y\in\mathbb R^{p+q}.
$$

In any fixed coordinate system in $\mathbb{R}^{p,q}$(where $p\geq2)$, $xy$-plane denotes the spacelike plane spanned by $(1,0,...,0)$ and $(0,1,0,..,0)$, and $P_{xy}$ denotes the orthogonal projection to $xy$-plane.

$\langle\cdot,\cdot\rangle$ denotes the inner product in pseudo-Euclidean spaces, while $w\cdot v$ denotes the Euclidean inner product of vectors $w,v$.

Suppose $A$ and $B$ are two $n$-dimensional real linear spaces, each with a fixed coordinate system. Then the coordinate copy $Q$ from $A$ to $B$ is defined as
$$
Q((x_1,\ldots,x_n)\in A)=(x_1,\ldots,x_n)\in B.
$$

Let $\Sigma$ be a spacelike $2$-dimensional subspace in $\mathbb{R}^{p,q}$. Then $P_{\Sigma}$ denotes the pseudo-Euclidean orthogonal projection to $\Sigma$. Correspondingly,  $P^*_{\Sigma}$ denotes the Euclidean orthogonal projection to $\Sigma$, where $\Sigma$ be a $2$-dimensional subspace in $\mathbb{R}^{p+q}$.

Suppose there is a $2$-subspace $\Sigma$ in a Euclidean space and a vector $v = v_1+v_2$, such that $v_1\in \Sigma$ and $v_2\in \Sigma^\perp$. Then the angle between $v$ and $\Sigma$ is defined as
$$\angle(v,\Sigma) = \arctan \frac{|v_2|}{|v_1|}\in [0,\frac{\pi}{2}].
$$

Let $r = r(u)$ be a $C^2$ regular parametrized curve. If $r'(u)$ and $r''(u)$ are linearly independent, then we can define the osculating subspace of $r$ at $r(u)$ as:
$$
\mathrm{Osc}_r(u) = \mathrm{span}\{r'(u), r''(u)\}.
$$
The osculating subspaces are independent of the choice of $u$.

Unless otherwise specified, $\frac{\partial}{\partial s}$ always denotes the partial derivative with respect to the arc-length parameter of the corresponding curve.

$B_r$ denotes the open Euclidean ball centered at the origin with radius $r$. We write $|\cdot|_E$ for the Euclidean norm in the fixed coordinates and $|v|=\sqrt{\langle v,v\rangle}$ for a spacelike vector. Boundedness of pseudo-Euclidean unit tangents always refers to $|\Gamma_s|_E$, not to their identically unit pseudo-Euclidean norm. Smooth convergence of curves is with respect to the topology induced by the Euclidean norm, and is understood up to smooth reparameterizations.\\

\noindent\textbf{Curvature and strong spacelike condition.}
We say a $C^2$ spacelike curve $\Gamma$ is \emph{strong spacelike} if $\Gamma$ admits a spacelike osculating subspace at each point. For a strong spacelike curve $\Gamma$, its curvature $k$ is defined as
$$
k = \left|\frac{d^2\Gamma}{ds^2}\right| = \sqrt{\left\langle \frac{d^2\Gamma}{ds^2},\frac{d^2\Gamma}{ds^2} \right\rangle}.
$$

For the definition of their index and the properties of strong spacelike curves, the reader may refer to \cite{YMW16} and \cite{YM19}.

\vspace{1cm}

\noindent\textbf{Main results.}
Firstly in Section 2-4, we establish a dichotomy for planar weighted normalized curve shortening flow with weights uniformly positively bounded.
\begin{theorem}\label{main theorem}
Let $\Gamma:S^1\times[0,+\infty)\to \mathbb{R}^2$ be a smooth mapping such that
\begin{align}\label{WNF}
    \Gamma_{\sigma} = a(\theta,\sigma)\Gamma_{ss}+\Gamma
\end{align}
where
$$
\frac{\partial}{\partial s} = \frac{1}{|\Gamma_\theta|}\frac{\partial}{\partial \theta}
$$
denotes the partial derivative with respect to the arc-length parameter. Also, for any ${\sigma}\in [0,+\infty)$, $\theta\mapsto\Gamma(\theta,{\sigma})$ is a smooth convex (curvature$\geq 0$) embedded curve (denoted by $\Gamma({\sigma})$).

Assume that $0<a_0 < a(\theta,{\sigma})<a_1 <+\infty$, and also assume that \textbf{the origin is enclosed by $\Gamma({\sigma})$} for ${\sigma}\in [0,+\infty)$. Then there are only two possible cases:

(1) There exists $A>0$ such that the area enclosed by $\Gamma({\sigma})$ is greater than $Ae^{2{\sigma}}$;

(2) There exists $0<r<R$ such that $\Gamma(\theta,{\sigma})\in B_R\setminus B_r$ for each $(\theta,\sigma)\in S^1\times[0,\infty)$.

\end{theorem}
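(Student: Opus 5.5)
The plan is to compute the area evolution, which forces a trichotomy in which one branch is impossible, one is case (1), and the remaining \emph{critical} branch must be shown to give case (2). Geometric a priori estimates on the support function then handle the critical branch.

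\textbf{Step 1: area.} Let $A(\sigma)$ be the area enclosed by $\Gamma(\sigma)$ and $\kappa\ge 0$ its curvature, and write $\ell$ for the arc-length element. Along the flow the normal speed is $a\kappa$ (inward) plus the normal component of the position vector. By the first variation of area,
\[
A'(\sigma)=2A(\sigma)-c(\sigma),\qquad c(\sigma)=\int_{\Gamma(\sigma)}a\,\kappa\,d\ell .
\]
Since $\int\kappa\,d\ell=2\pi$ for a convex embedded closed curve, we get $2\pi a_0\le c(\sigma)\le 2\pi a_1$. Consequently
\[
\bigl(e^{-2\sigma}A\bigr)'=-c\,e^{-2\sigma},
\]
so $e^{-2\sigma}A(\sigma)$ decreases to the limit
\[
L=A(0)-\int_0^\infty c\,e^{-2\tau}\,d\tau .
\]
There are three possibilities for $L$:
\begin{itemize}
\item If $L>0$, then $A(\sigma)\ge Le^{2\sigma}$, which is case (1).
\item If $L<0$, then $A$ becomes negative in finite time, which is impossible.
\item If $L=0$, then $A(\sigma)=e^{2\sigma}\int_\sigma^\infty c\,e^{-2\tau}\,d\tau$, and hence $\pi a_0\le A(\sigma)\le\pi a_1$ for all $\sigma$.
\end{itemize}
It remains to show that uniformly bounded area forces case (2).

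\textbf{Step 2: support function and width.} Since the origin is enclosed, the support function $h(\varphi,\sigma)>0$ is well defined, with radius of curvature $h_{\varphi\varphi}+h=1/\kappa$. After reparametrizing by the normal angle, it satisfies
\[
h_\sigma=-a\,\kappa+h .
\]
Let $w(\varphi)=h(\varphi)+h(\varphi+\pi)$ be the width and $D=\max_\varphi w$ the diameter. At a maximum point of $w$ we have $w_{\varphi\varphi}\le0$, so $\kappa_1^{-1}+\kappa_2^{-1}\le D$ and hence $\kappa_1+\kappa_2\ge 4/D$. This gives, in the sense of Hamilton's maximum principle,
\[
D'\le D-\frac{4a_0}{D}.
\]
For a convex body we also have $A\ge \tfrac12 D\,w_{\min}$ and $A\le D\,w_{\min}$.

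\textbf{Step 3: upper bound $R$ (main obstacle).} The inequality above only shows that $D$ cannot decay too fast; on its own it does not prevent $D$ from growing. To exclude unbounded diameter I would exploit the fact that the solution exists for all forward time with area trapped in $[\pi a_0,\pi a_1]$. The route I would try first is a contradiction argument: suppose $D(\sigma_k)\to\infty$. Then $w_{\min}\le \pi a_1/D\to0$, so the curve becomes a long thin needle. Near the two far ends, the support function in directions close to the long axis is large while $w_{\min}$ is tiny. I would combine two things:
\begin{itemize}
\item the lower bound $h_\sigma\ge h-a_1\kappa$, applied at the tip directions, where $h$ is large;
\item a comparison argument in the thin direction: along the $w_{\min}$ direction, $w_\sigma=w-a(\kappa_1+\kappa_2)$.
\end{itemize}
The aim is to show that $w_{\min}$ stays bounded below while the length grows, making the area eventually exceed $\pi a_1$; the only other outcome would be the needle collapsing to a segment, which forces area to $0$. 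If this direct approach fails, I would replace it with an entropy or isoperimetric-ratio quantity, in the spirit of Gage's inequality $\int\kappa^2\ge \pi L/A$ adapted to the weight bounds. Such a quantity should show that $L^2/A$ stays bounded when $A$ is bounded, which bounds $D$, so that $R$ can be taken larger than $\sup D$.

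\textbf{Step 4: lower bound $r$.} Once $D\le R$ is known, $w_{\min}\ge A/D\ge\pi a_0/R$. The width bound alone does not control $\min_\varphi h$, because the origin could still approach the curve. At a minimum point of $h$ we have $h_{\varphi\varphi}\ge0$, so $\kappa\le 1/h$ and
\[
(\min h)'\ge \min h-\frac{a_1}{\min h}.
\]
This rules out a small value persisting only forward in time. I would therefore run the argument backwards in time. If $\min h(\sigma_1)=\varepsilon$ is very small, then on an interval of length $O(1)$ before $\sigma_1$ the curve lies within a controlled distance of the tangent line through a point $\varepsilon$-close to the origin. By the forward expansion $h_\sigma\approx h$ for large $h$, together with the bounds $D\le R$ and $A\ge\pi a_0$, a half-plane comparison argument should then show that the origin exits the enclosed region at a later time. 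This contradicts the hypothesis that the origin is enclosed for all $\sigma$, which gives a uniform lower bound $r>0$ on $\min h$ and completes case (2).
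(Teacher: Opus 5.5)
Your Step~1 is correct and matches the paper's reduction: the area identity leaves either exponential area growth or $\pi a_0\le A\le\pi a_1$. The gap is that the bounded-area branch, which is the whole content of the theorem, is never proved. Step~3 only describes what you hope to show, and neither route closes.

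\textbf{The upper bound.} At a direction of minimal width you only get $\kappa_1^{-1}+\kappa_2^{-1}\ge w_{\min}$. This does not make either curvature small, so $w_\sigma=w-a(\kappa_1+\kappa_2)$ gives no lower bound on $w_{\min}$. More fundamentally, nothing in your sketch locates where the needle's curvature sits relative to the region whose area you want to grow.

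The Gage-type fallback is not available either. With only $a_0<a<a_1$ and no control of derivatives of $a$, the perimeter $P$ satisfies
\[
\Bigl(\frac{P^2}{A}\Bigr)'=\frac{P}{A^2}\Bigl(P\int a\kappa\,d\ell-2A\int a\kappa^2\,d\ell\Bigr).
\]
This is positive, for instance, on a slightly perturbed unit circle with large weight where $\kappa<1$ and small weight where $\kappa>1$. Moreover, under an area bound, boundedness of $P^2/A$ is just a restatement of the diameter bound you are trying to prove.

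The missing idea is barrier comparison via Lemma~\ref{strict barrier lemma}, used together with the hypothesis that the origin stays enclosed \emph{for all later times}. In that lemma the outer barrier must carry the smaller weight, so barriers move with weight $a_0$ or $0$. The paper's argument runs as follows.
\begin{enumerate}
\item If $R(\sigma)\ge R_0$ on an interval of fixed length $T$, the area bound traps the curve in a thin strip.
\item A rescaled grim-reaper barrier (Lemma~\ref{small tube barrier}) forces the curve to reach a distance of order $R_0/A_1$ on \emph{both} sides of the origin along the strip. Otherwise the origin would eventually be vacated.
\item Hence the turning inside the central slab $\{|x|<x_1\}$ is $O(A_1^2/R_0^2)$. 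The slab area then satisfies $F'\ge F-a_1\int_{\mathrm{slab}}\kappa\,d\ell\ge F/2$, so it exceeds the area bound after time $T$.
\item Combined with $R(\sigma+\tau)\le e^{\tau}R(\sigma)$ and a last-time argument, this bounds $R$.
\end{enumerate}

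\textbf{The lower bound.} Step~4 has the same defect. The inequality $(\min h)'\ge\min h-a_1/\min h$ carries no information when $\min h$ is small. A parabolic flow gives no backward-in-time control, so running the argument backwards produces nothing.

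A half-plane barrier moves by pure dilation and can never push the curve across the origin. A large enclosing disk also fails. To contain $B_R\cap\{x\ge -r\}$ with its leftmost point at distance of order $r$, its radius must be of order $R^2/r$. Its inward motion over the available time is then only of order $a_0r/R^2$, which cannot cover a distance of order $r$ once $R^2\gtrsim a_0$.

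What is needed is an outer barrier of size controlled by $R$, whose boundary near the origin is flat and detaches by a definite amount. The paper takes the CSF of a convex body $K_R\subset\{x\ge0\}$ with area $>\pi$ and a flat side containing $\{0\}\times[-R,R]$; by $t=1/4$ its flat side has moved to $x\ge c_R>0$. If the curve lies in $B_R$ and meets $B_r$ with $r<c_R/2$, it lies inside $K_R-(r,0)$. The normalized barrier then leaves $\{x\le 0\}$ in finite time, contradicting enclosure (Lemma~\ref{barrier for a small ball inside}).
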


Secondly in Section 5-8, we apply this dichotomy to classify the curve shortening flow singularities of closed spacelike curves in $\mathbb{R}^{n+2,k}$ that admit a one-to-one convex projection onto a spacelike plane.

\begin{theorem}\label{main theorem 2}
Let $\Gamma:S^1\times[0,T)\to\mathbb R^{n+2,k}$ be a maximal smooth spacelike CSF. Suppose $P_{xy}\circ\Gamma(\cdot,0)$ is a one-to-one parametrization of a convex planar curve. Then $T<\infty$, and exactly one of the following alternatives holds:

(1) $\displaystyle\limsup_{t\uparrow T}\max_{\theta\in S^1}|\Gamma_s(\theta,t)|_E=\infty$;

(2) There is a point $a$ to which $\Gamma(t)$ shrinks, and the normalized curve
\[
\gamma(\theta,\sigma)=[2(T-t)]^{-1/2}(\Gamma(\theta,t)-a),
\qquad \sigma=-\frac12\log\frac{T-t}{T},
\]
converges smoothly to a centered unit circle $C_\infty$ in a fixed spacelike plane. More precisely, there are smooth reparametrizations $\varphi_\sigma:S^1\to S^1$ such that, for every integer $m\geq0$,
\begin{equation}\label{main-exponential-rate}
\|\gamma(\varphi_\sigma(\cdot),\sigma)-C_\infty\|_{C^m(S^1;E)}
\leq C_m e^{-2\sigma}
\end{equation}
for all sufficiently large $\sigma$. The norms use a fixed auxiliary Euclidean metric, and the constants may depend on the initial curve and $m$.
\end{theorem}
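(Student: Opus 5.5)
We prove Theorem~\ref{main theorem 2} by reducing to Theorem~\ref{main theorem} through the projection $P_{xy}$, and then upgrading the resulting dichotomy with parabolic estimates.

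\textbf{Step 1: the projection stays convex, one-to-one and regular.} Write $\hat\Gamma=P_{xy}\circ\Gamma$. The $xy$-coordinates of a CSF evolve by $\hat\Gamma_t=\Gamma_{ss}$ projected. After the tangential reparametrization recalled in the introduction (\cite{Sun24}, Lemma 3.4), this becomes
\[
\hat\Gamma_t=(d\hat s/ds)^2\hat\Gamma_{\hat s\hat s}.
\]
This is a weighted planar CSF. Since $\Gamma_s$ is pseudo-unit, $|\Gamma_s|_E\ge1$. Spacelikeness gives $|P_{xy}\Gamma_s|_E\ge 1$ as well, because the spacelike $x,y$ part must dominate the timelike part. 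In particular the projection stays regular.

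For a weighted flow the curvature evolves by a parabolic equation of the form
\[
\hat k_t=a\hat k_{\hat s\hat s}+(\text{first-order terms})+a\hat k^3 .
\]
The maximum principle therefore keeps $\hat k\ge0$. The winding number and embeddedness are preserved by the usual Sturmian/intersection argument. So $\hat\Gamma(t)$ remains a convex embedded curve.

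Finite time: the enclosed area satisfies $A'=-\int a\hat k\,d\hat s\le -2\pi a_0$ as long as the weight is bounded below. The weight $a=(d\hat s/ds)^2$ is always $\ge1$, so $T<\infty$.

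\textbf{Step 2: dichotomy via Theorem~\ref{main theorem}.} Suppose alternative (1) fails, so $|\Gamma_s|_E\le C$ on $[0,T)$. Then $1\le a\le C^2$ uniformly. Also $|\Gamma_t|$ is controlled, and the full curve is a graph over $\hat\Gamma$ with bounded slope.

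We first show that $\hat\Gamma(t)$ shrinks to a point $\hat a$. If the area stayed bounded below, the curvature would stay bounded and the flow would extend, contradicting maximality. With this in hand, the bounded-slope graph property forces all of $\Gamma(t)$ to shrink to a point $a$. Choose $\hat a$ to be enclosed; this needs a short argument that the limit point lies inside every $\hat\Gamma(t)$, which holds by convexity and nesting.

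Now normalize around $a$ to get $\gamma$ and its projection $\hat\gamma$. The projection solves the WNF with weights in $[1,C^2]$ and encloses the origin. Case (1) of Theorem~\ref{main theorem} says the area is at least $Ae^{2\sigma}$. Unnormalized, this means the area is at least $2AT$, which contradicts shrinking to a point. Hence case (2) holds: $\hat\gamma\subset B_R\setminus B_r$. The bounded slope then makes $\gamma$ uniformly bounded as well.

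\textbf{Step 3: smooth convergence and the rate.} With $\gamma$ bounded and $|\gamma_s|_E$ bounded, the coordinates of $\gamma$ satisfy a uniformly parabolic system in the graph parametrization over $\hat\gamma$. Theorem~\ref{Nash and Schauder estimate} then gives uniform $C^m$ bounds for all $\sigma$.

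Next we identify the limit, using monotonicity of Huisken's functional $\int e^{-|\gamma|^2/2}ds$. This functional is computed with the pseudo metric, so we need to check that it is still monotone. The limits along subsequences are then self-shrinkers $\gamma_{ss}+\gamma=0$ that are spacelike, closed, bounded, and project one-to-one onto a convex curve. Solving the ODE shows each such limit is a unit circle in a spacelike plane. Uniqueness of the plane, together with the exponential rate $e^{-2\sigma}$, comes from linearizing at the circle. The spectral gap of the linearized operator gives decay $e^{-2\sigma}$ after modding out the translation/rotation modes, which are absorbed by the choice of $a$ and of $\varphi_\sigma$. Alternatively one can use a \L{}ojasiewicz-type argument, with the gap giving the rate.

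\textbf{Main obstacle.} I expect the main obstacle to be Step 3. The pseudo-Euclidean Huisken monotonicity is not automatic, since the weight involves indefinite norms. Pinning the rate at exactly $e^{-2\sigma}$ also needs an honest spectral computation for the codimension-$(n+k)$ linearization, including the normal modes in the timelike directions.
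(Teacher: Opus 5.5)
Your proposal has two genuine gaps: the weight lower bound in Steps 1–2 fails when $n\geq1$, and the limit identification in Step~3 relies on a monotonicity that does not hold.

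\textbf{Step 1 fails for $n\geq1$.} The claim that spacelikeness gives $|P_{xy}\Gamma_s|_E\geq1$ is true only in $\mathbb R^{2,k}$. In $\mathbb R^{n+2,k}$ the coordinates $z_1,\dots,z_n$ are also spacelike, and
\[
x_s^2+y_s^2=1-\sum_i (z_i)_s^2+\sum_j (w_j)_s^2
\]
can be arbitrarily small; a unit tangent along $z_1$ projects to zero. So the weight $a=(d\hat s/ds)^2$ has no a priori positive lower bound, even under $|\Gamma_s|_E\leq C$. This breaks several later steps:
\begin{itemize}
\item your regularity of the projection;
\item your proof of $T<\infty$, which the statement asserts in both alternatives and so cannot use the tangent bound;
\item the hypothesis $a>a_0>0$ of Theorem~\ref{main theorem};
\item the bounded slopes of $z_i,w_j$ over $\hat\Gamma$ that you use to collapse the whole curve;
\item the uniform parabolicity of the graph system in Step~3.
\end{itemize}
The missing ingredient is inclination monotonicity. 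Angenent's Sturmian theorem, applied to $\langle\Gamma^*_\theta,v\rangle$ for the Euclidean coordinate copy and \emph{every} $2$-plane (Theorem~\ref{convexity keeping}), shows that the set of planes onto which the projection is embedded and strictly convex only grows. Hence the maximal inclination of chords and osculating planes is nonincreasing (Lemma~\ref{three point monotonicity}). This gives $ds/d\hat s\leq\sqrt{1+nc_1^2}$ for $t\geq t_0>0$, and with it both $T<\infty$ and the lower weight bound. A minor point: area $\to0$ alone does not give collapse to a point. You should center at a point of $\bigcap_t D(t)$, apply Theorem~\ref{main theorem}, and only then deduce point-collapse from the annulus bound, as the paper does.

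\textbf{Step 3 has no working limit identification.} Along $\gamma_\sigma=\gamma_{ss}+\gamma$, the Gaussian functional changes by
\[
-\int\langle V,V\rangle e^{-\langle\gamma,\gamma\rangle/2}\,ds,\qquad V=\gamma_{ss}+\gamma^\perp .
\]
For $k\geq1$ the normal space of a spacelike curve has signature $(n+1,k)$, so this integrand has no sign. The functional need not be monotone, and even vanishing dissipation would not force $V=0$. You flag this as the main obstacle but give no replacement, and it is the core of the round-point alternative.

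The paper avoids any pseudo-Euclidean monotone quantity:
\begin{itemize}
\item It proves a uniform positive lower bound for the projected curvature via a log-curvature integral (Theorem~\ref{curvature lower bound}).
\item It observes that the Euclidean copy $Q\gamma$ solves a Euclidean weighted flow with a tangential term, so its total curvature satisfies $K_E'=-\int b\kappa_E\tau_E^2\,d\ell$ (Proposition~\ref{total curvature evolution}).
\item Integrability of the torsion energy plus a bound on its derivative then gives planar, spacelike subsequential limits through the origin (Theorem~\ref{planar subsequential limit}).
\item In rotated coordinates, inclination monotonicity gives $d\hat s/ds\to1$, Gage's inequality drives $\hat L^2/A\to4\pi$, the area equation gives $A\to\pi$, and a circle barrier centers the limit.
\end{itemize}

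\textbf{The rate sketch is incomplete.} It is in the right spirit, but the $e^{-2\sigma}$ bound must also control the neutral tilt modes of the transverse part ($\ker(\partial_\theta^2+1)$), which are not absorbed by $a$ or $\varphi_\sigma$. It must also control the unstable dilation and translation modes of the radial part, whose decay comes only from the already established convergence. Appendix~\ref{sec:exponential-convergence} does this with energies in a fixed-plane gauge and the terminal integral $h(\sigma)=-\int_\sigma^\infty(\alpha L_1h)(\xi)\,d\xi$.
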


Finally in Section 9, we derive a monotonicity formula for an area-bivector in $\mathbb{R}^{2,1}$, yielding an explicit criterion that rules out point collapse. Using this criterion, we construct a smooth spacelike initial curve with a one-to-one strictly convex projection whose flow develops a null-degenerate singularity. We also discuss the singularity behavior of closed strong spacelike curves, proving round-point convergence in the index-one case and exhibiting null degeneration in an index-two example.\\

\noindent\textbf{Methodology.}
Our setting presents two difficulties in relation to existing approaches. In the curve case considered by Andrews and Zhou \cite{AZ26}, spacelike convexity ensures that the curvature vector is everywhere strictly spacelike. Our convex-projection hypothesis does not impose this condition: the curvature vector may be timelike or null, and the usual positive curvature magnitude need not be globally available. Consequently, the spacelike-curvature structure underlying their pinching and noncollapsing estimates cannot be assumed here. Furthermore, the ambient metric is indefinite, so Huisken’s classical Gaussian monotonicity formula, used in Sun’s Euclidean analysis \cite{Sun26}, is not directly applicable with its usual nonnegative dissipation term. These difficulties motivate an approach based on global geometric control through a planar projection.

Under the convex-projection hypothesis, the projected curves evolve by a weighted planar curve shortening flow (up to tangential reparametrization). This reduces the key global estimates to an analysis of the overall behavior of the planar weighted flow, while retaining the influence of the transverse directions through the weight. In the absence of null degeneration, the inclination estimates and the uniform spacelike bound provide positive upper and lower bounds for this weight. We establish a dichotomy for the corresponding weighted normalized flow using only these bounds, without requiring uniform control of derivatives of the weight. Applied to the projected flow, this yields uniform annular bounds, which in turn provide position bounds for the full normalized curves.

Together with the inclination estimates, this geometric control provides local graphical representations on a uniform spatial scale, with bounded slopes and uniformly parabolic evolution equations. Interior estimates then yield bounds for all higher derivatives and smooth subsequential compactness after suitable reparametrization. To identify the limits, we combine a positive lower bound for the projected curvature with the dissipation of the total Euclidean curvature of the coordinate copy, establishing planarity of the limiting curves. An isoperimetric argument and a subsequent analysis of the normalized equations yield convergence to a round circle and the sharp exponential rate. The resulting framework applies to both Euclidean and pseudo-Euclidean spaces without requiring the original curvature vector to be spacelike.
\vspace{1cm}

\noindent\textbf{Table of notations.}
\begin{center}
\begin{tabular}{ll}
$S^1$ & The parameter circle $\mathbb R/(2\pi\mathbb Z)$.\\
$\Gamma,\ t,\ T$ & Unnormalized CSF, its time, and its maximal time.\\
$\gamma,\ \sigma$ & Normalized CSF and normalized time.\\
$s,\ \hat s,\ \ell$ & Pseudo-Euclidean, planar projected, and Euclidean arclength.\\
$Q$ & Euclidean coordinate copy. \\
$P_\Sigma$ & Pseudo-Euclidean spacelike planar orthogonal projection. \\
$P^*_\Sigma$ & Euclidean planar orthogonal projection.\\
$D(t),\ A_{\mathrm{un}}(t)$ & Domain enclosed by $P_{xy}(\Gamma(t))$ and its area (Section~6).\\
$A(\sigma)$ & Area of the domain enclosed by $P_{xy}(\gamma(\sigma))$ in Sections~6--8.\\
$|\cdot|_E$ & Euclidean norm in fixed ambient coordinates.\\
$\hat k,\ \kappa_E$ & Curvature of $P_{xy}(\gamma)$ and curvature of $Q(\gamma)$.\\
$u,\ f,\ M,\ F$ & Log curvature, its extrema and integral (Theorem~\ref{curvature lower bound}).\\
$\hat g,\ L=\hat L$ & Projected metric density and length (Theorem~\ref{curvature lower bound}).\\
$\mathfrak d=\kappa_E^2\tau_E^2$ & Curvature--torsion density (Theorem~\ref{planar subsequential limit}).\\
$K_E,\ E,\ \mathcal B_E$ & Total curvature, torsion energy and torsion vector (Theorem~\ref{planar subsequential limit}).\\
$\mathscr T,\ \mathscr B,\ \Delta$ & Chord/osculating sets, three-point planes, inclination.\\
$\mathcal Q,\ \mathcal E,\ \mathcal D$ & Graph energies and radial dissipation (Appendix~\ref{sec:exponential-convergence}).\\
$\mathcal A$ & Area bivector in Section~\ref{sec:examples}.\\
$\mathcal F_G$ & Dimensionless Gage deficit used in Section 8.\\
\end{tabular}
\end{center}
\vspace{0.4cm}

\noindent\textbf{Acknowledgement.}
The authors thank helpful discussions with Dong Zhang and Nan Ye which clarified many details in the proofs and improved the exposition in many aspects. We are grateful to Qi Sun, who informed us the area-bivector monotonicity and null-tangent-limit example under the help of ChatGPT. All the figures were produced using GeoGebra.\\

\noindent\textbf{Statement on AI tools.}
The area-bivector monotonicity in Section~9.3 was discovered by ChatGPT. The ideas underlying Example~\ref{explicit-bivector-example} and Appendix ~\ref{sec:exponential-convergence} originated from the authors, while most of the detailed arguments in these parts were developed with the assistance of ChatGPT. In the remaining parts of the paper, ChatGPT only assisted with checking details, writing, and typesetting.

\newpage

\section{The correspondence between CSF and NF}

In the Gage-Hamilton-Grayson Theorem, a solution $\Gamma$ to the embedded curve shortening flow could be applied both temporal and spatial rescalings, and the resulting solution $\gamma$ satisfies the normalized flow equation and converges to the unit circle.

The rescaling process could be viewed as solving the NF with the initial curve
$$
\gamma(0) = \frac{1}{\sqrt{2T_\Gamma}}\Gamma(0)
$$
where $T_\Gamma$ denotes the maximal existence time of $\Gamma$. In this process the size of $\gamma(0)$ is restricted (the area of the interior of $\gamma(0)$ must be $\pi$). More generally, we can remove this restriction on size and replace the scaling factor (of the initial curve) $\frac{1}{\sqrt{2T_\Gamma}}$ by $\frac{1}{\sqrt{2T_0}}$, where $T_0>0$ is arbitrarily chosen.

\begin{proposition}
Let $\Gamma: S^1\times [0,T) \to \mathbb{R}^{p,q}$ be a solution to the CSF with maximal existence time $T$. Suppose $\gamma$ is a solution to the NF, with initial curve $\gamma(0) = \lambda \Gamma(0)$. Then
\begin{align}\label{expression of NF}
    \gamma(u,{\sigma}) = e^{{\sigma}}\sqrt{\frac{1}{2T_0}}\Gamma(u,T_0(1-e^{-2{\sigma}}))
\end{align}
where $T_0 = \frac{1}{2\lambda^2}$. Moreover:\\
\indent If $T_0>T$, then $\gamma$ loses smooth spacelike regularity at ${\sigma}_0 = -\frac{\ln(T_0- T)}{2}+\frac{\ln T_0}{2}$; \\
\indent If $T_0<T$, then $\gamma$ cannot remain in a fixed bounded set as $\sigma\to\infty$.
\end{proposition}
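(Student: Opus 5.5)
The plan is to verify formula \eqref{expression of NF} directly and then appeal to uniqueness for the NF; the two regularity statements will then follow by reading off the time change. Set $\tilde\gamma(u,\sigma)=e^{\sigma}(2T_0)^{-1/2}\Gamma(u,t(\sigma))$ with $t(\sigma)=T_0(1-e^{-2\sigma})$, so $t'(\sigma)=2T_0e^{-2\sigma}$. Arclength scales linearly under dilation: $\partial_{\tilde s}=e^{-\sigma}(2T_0)^{1/2}\partial_s$. This holds in the pseudo-Euclidean setting as well, because the spacelike norm is homogeneous of degree one. Therefore
\[
\tilde\gamma_{\tilde s\tilde s}=e^{-\sigma}(2T_0)^{1/2}\Gamma_{ss}.
\]
Differentiating in $\sigma$ gives
\[
\tilde\gamma_\sigma=\tilde\gamma+e^{\sigma}(2T_0)^{-1/2}\,2T_0e^{-2\sigma}\,\Gamma_t=\tilde\gamma+e^{-\sigma}(2T_0)^{1/2}\Gamma_{ss}=\tilde\gamma+\tilde\gamma_{\tilde s\tilde s}.
\]
At $\sigma=0$ we have $\tilde\gamma(0)=(2T_0)^{-1/2}\Gamma(0)=\lambda\Gamma(0)$, since $T_0=1/(2\lambda^2)$. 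Reversing the substitution shows that any NF solution starting from $\lambda\Gamma(0)$ rescales back to a CSF solution starting from $\Gamma(0)$. Uniqueness of smooth solutions of the CSF from given initial data (a quasilinear parabolic system, modulo tangential reparametrization) then gives $\gamma=\tilde\gamma$ on the common interval of existence. I expect the uniqueness invocation to be the only subtle point: the spacelike CSF is strictly parabolic only while the curve stays spacelike. I will quote standard short-time existence and uniqueness for quasilinear parabolic systems with smooth data, applied while $|\Gamma_u|$ is bounded below.

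For the first alternative, suppose $T_0>T$. The map $\sigma\mapsto t(\sigma)$ is an increasing diffeomorphism from $[0,\infty)$ onto $[0,T_0)$. It reaches $t=T$ exactly when $e^{-2\sigma}=(T_0-T)/T_0$, that is, at $\sigma_0=-\tfrac12\ln(T_0-T)+\tfrac12\ln T_0$. On $[0,\sigma_0)$ the correspondence is a smooth bijection with smooth positive scaling factors $e^{\sigma}(2T_0)^{-1/2}$ that stay bounded near $\sigma_0$. Suppose $\gamma$ extended smoothly and spacelike past $\sigma_0$. Applying the inverse correspondence would then extend $\Gamma$ smoothly and spacelike past $T$, contradicting the maximality of $T$. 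Hence $\gamma$ loses smooth spacelike regularity at $\sigma_0$.

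For the second alternative, suppose $T_0<T$. As $\sigma\to\infty$ we have $t(\sigma)\uparrow T_0<T$, so $\Gamma(\cdot,t(\sigma))\to\Gamma(\cdot,T_0)$ smoothly. The limit $\Gamma(\cdot,T_0)$ is a regular closed curve, hence not a single point, so $\max_u|\Gamma(u,T_0)-\Gamma(u',T_0)|_E=:d>0$. The Euclidean diameter of $\gamma(\sigma)$ is then asymptotically $e^{\sigma}(2T_0)^{-1/2}d\to\infty$. Therefore $\gamma(\sigma)$ cannot remain in any fixed bounded set. This step is routine; the only thing to note is that the diameter argument is independent of where the origin lies, so no centering assumption is needed.
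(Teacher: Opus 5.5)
Your proposal is correct and follows the paper's argument: verify the rescaled time-changed formula and invoke CSF uniqueness (the paper runs the substitution from $\gamma$ back to $\Gamma$, you run both directions), then use the same extension-contradiction for $T_0>T$. For $T_0<T$ you use the positive Euclidean diameter of $\Gamma(\cdot,T_0)$ where the paper picks a single point with $\Gamma(u,T_0)\neq 0$; this is an immaterial variant of the same blow-up argument.
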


\begin{proof}
If we let
\begin{align}\label{NF TO CSF}
    \Gamma_1(u,t) = e^{-{\sigma}}\sqrt{2T_0}\gamma(u,{\sigma}).
\end{align}
where $t = T_0(1-e^{-2{\sigma}})$, then simple calculation shows that $\Gamma_1$ is exactly the solution to the CSF with initial data $\Gamma_1(0) = \Gamma(0)$. From the uniqueness result of the curve shortening flow, $\Gamma_1 = \Gamma$ and \ref{expression of NF} follows.

For $T_0>T$, the rescaling and time change are nonsingular at the stated finite $\sigma_0$, so an extension of $\gamma$ would extend $\Gamma$ past $T$. For $T_0<T$, choose $u$ such that $\Gamma(u,T_0)\ne0$, which is possible because $\Gamma(\cdot,T_0)$ is an immersion. Formula~\eqref{expression of NF} then gives $|\gamma(u,\sigma)|_E\to\infty$.
\end{proof}

In some points of view, \textbf{the NF solution with $\lambda$-times initial data is an observation of the CSF solution during the time interval $\left[0,T_0 =\frac{1}{2\lambda^2}\right)$.} (This observation involves both a continuously expanding perspective and a slow-motion process.)

Below are examples evolving under the NF: lines, circles and rescaled grim-reapers.

\begin{example}\label{example line}
(\textbf{The Moving Lines.})
Let $\gamma$ be a solution to the NF with $\Gamma(x,0) = x\vec{v} + \vec{w}$. Then $\gamma(x,{\sigma}) = e^{\sigma}(x\vec{v} + \vec{w})$.
\end{example}

\begin{example}\label{example circle}
(\textbf{The Expanding or Shrinking Circles.}) Let $\Gamma$ be a solution to the CSF with initial data
$$
\Gamma(\theta,0) = (\cos \theta, \sin \theta).
$$
By rotation symmetry, $\Gamma(t)$ is a circle for $t\in [0,T)$.

Let $A(t)$ be the area enclosed by $\Gamma(t)$; then $A'(t)=-\int_{\Gamma(t)}k\,ds=-2\pi$. So $A(t) = \pi-2\pi t$ and we deduce
$$
\Gamma(\theta,t) = \sqrt{1-2t}(\cos \theta, \sin \theta), \,T = 1/2.
$$

Let
\begin{align*}
\gamma(\theta,{\sigma}) = [2(T_0- t)]^{-1/2}\Gamma(\theta,t) &= \sqrt{\frac{1-2t}{2T_0-2t}}(\cos \theta, \sin \theta)
 \\&= \sqrt{\frac{1-2T_0+2T_0e^{-2{\sigma}}}{2T_0e^{-2{\sigma}}}}(\cos \theta, \sin \theta)
\end{align*}

where ${\sigma} = -\frac{\ln(T_0- t)}{2}+\frac{\ln T_0}{2}$.

$\gamma$ is a solution to the NF, with initial data
$$
\gamma(\theta,0) = \sqrt{\frac{1}{2T_0}}(\cos \theta, \sin \theta).
$$

If $T_0< 1/2$, then the initial curve is a circle with radius $\sqrt{\frac{1}{2T_0}}>1$. Under the NF, $\gamma({\sigma})$ becomes infinitely large as ${\sigma} \to +\infty$.

If $T_0= 1/2$, then the initial curve is a circle with radius $1$. Under the NF, $\gamma({\sigma})$ remains fixed.

If $T_0> 1/2$, then the initial curve is a circle with radius $\sqrt{\frac{1}{2T_0}}<1$. Under the NF, $\gamma({\sigma})$ shrinks and finally converges to the origin as ${\sigma} \to -\frac{\ln(T_0- 1/2)}{2}+\frac{\ln T_0}{2}$.

Moreover, we consider solution to
$$
\gamma_{\sigma} =\gamma_{ss} + \gamma
$$
with a generic initial data
$$
\gamma(\theta,0) = \lambda(\Gamma(\theta,0)+v),\,v\neq 0.
$$

The solution can be written as
\begin{align*}
\gamma(\theta,{\sigma}) = [2(T_0- t)]^{-1/2}(\Gamma(\theta,t)+v)
\end{align*}
where $T_0 =\frac{1}{2\lambda^2} $ and ${\sigma} = -\frac{\ln(T_0- t)}{2}+\frac{\ln T_0}{2}$.

So we conclude:

If $\lambda<1$, then $\gamma({\sigma})$ converges to a point $[2\left(\frac{1}{2\lambda^2}- \frac{1}{2}\right)]^{-1/2}v$ as ${\sigma} \to -\frac{\ln\left(\frac{1}{2\lambda^2}- \frac{1}{2}\right)}{2}+\frac{\ln \frac{1}{2\lambda^2}}{2}$.

If $\lambda = 1$, then $\gamma({\sigma})$ moves toward infinity with fixed radius.

If $\lambda > 1$, then $\gamma({\sigma})$ becomes larger and larger while its center moves toward infinity.

Furthermore, the initial size and position of the circle determine whether it will cease to enclose the origin during the evolution. If
\[
\lambda^{-2} = 2T_0>1-|v|^2,
\]
then $(\Gamma(\theta,t)+v)$ does not enclose the origin for $t\in \left(\frac{1-|v|^2}{2}, T_0\right)$, which means $\gamma({\sigma})$ does not enclose the origin as ${\sigma} \to +\infty$. Otherwise, if $\lambda^{-2} \leq 1-|v|^2$, then the origin keeps enclosed by the evolving circle.

By Proposition~\ref{rescaling a flow}, the above conclusions also hold for a more generic solution
$$
\gamma_{\sigma} =a^2\gamma_{ss} + \gamma
$$
where $a$ is a constant and
$$
\gamma(\theta,0) = a\lambda(\Gamma(\theta,0)+v),\,v\neq 0.
$$

\begin{figure}[H]
  \centering

  \begin{subfigure}[b]{0.48\linewidth}
    \centering
    \includegraphics[width=\linewidth,height=0.17\textheight,
  keepaspectratio]{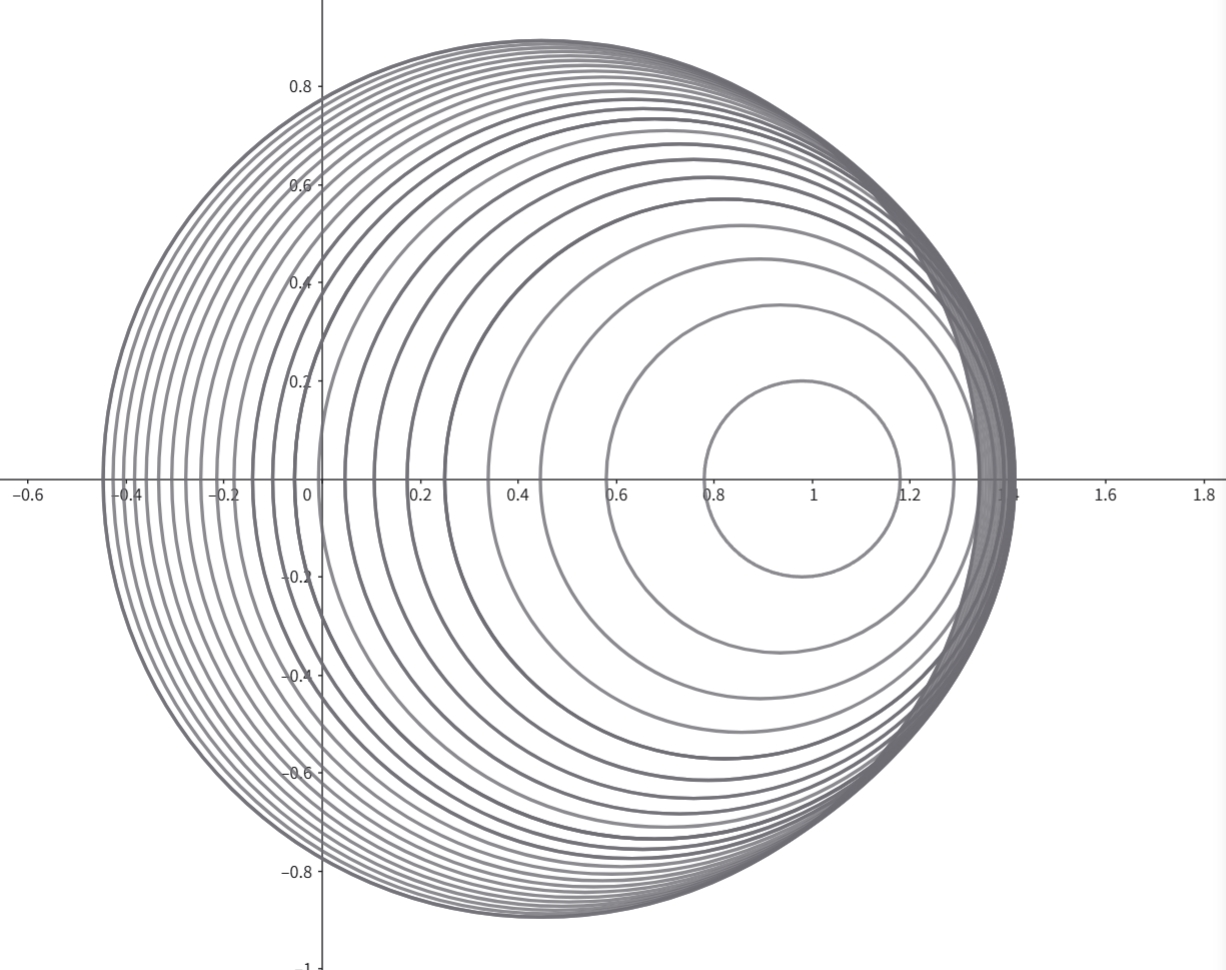}
\caption{The shrinking circles, $|v|=0.5, \lambda = \sqrt{\frac45}$.}
\label{shrinking circle, v = (0,0.5) , lambda = sqrt(frac45)}
  \end{subfigure}
  \hfill
  \begin{subfigure}[b]{0.48\linewidth}
    \centering
    \includegraphics[width=\linewidth,height=0.17\textheight,
  keepaspectratio]{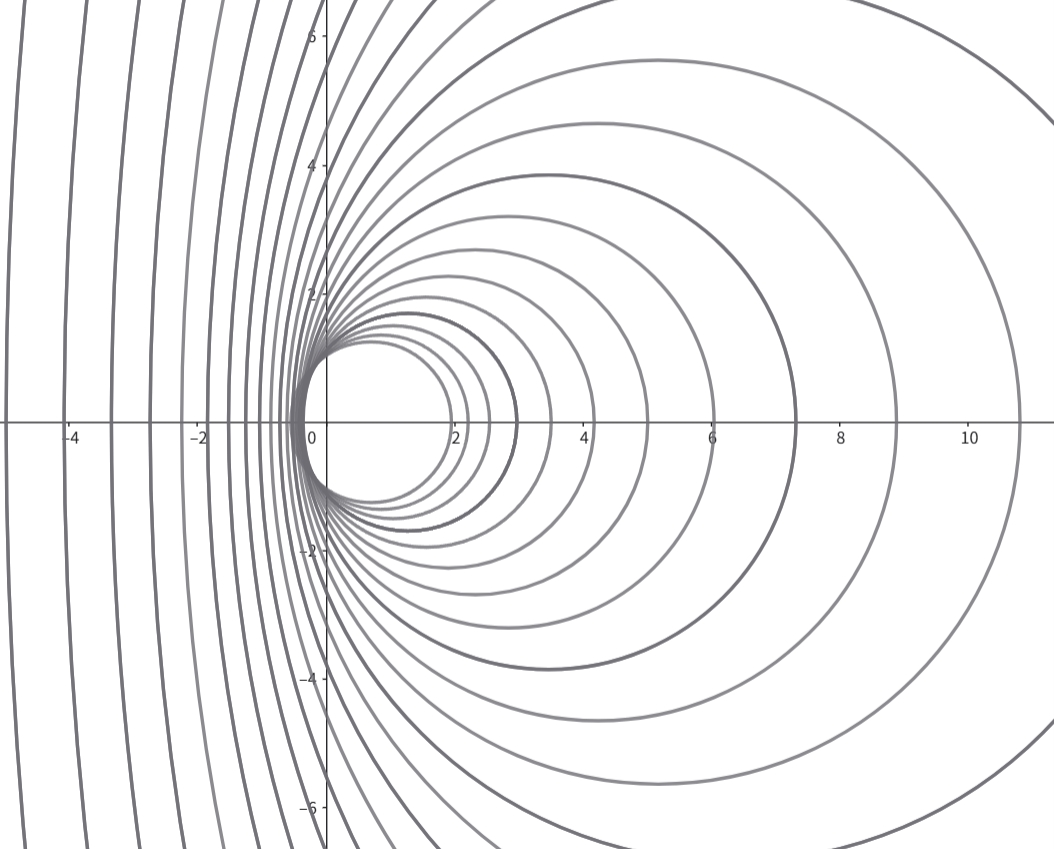}
\caption{The expanding circles, $|v|=0.56, \lambda = 1.25$.}
\label{expanding circle, v = (0,0.56) , lambda = 1.25.jpeg}
  \end{subfigure}

  \par\medskip

  \begin{subfigure}[b]{0.48\linewidth}
    \centering
    \includegraphics[width=\linewidth,height=0.17\textheight,
  keepaspectratio]{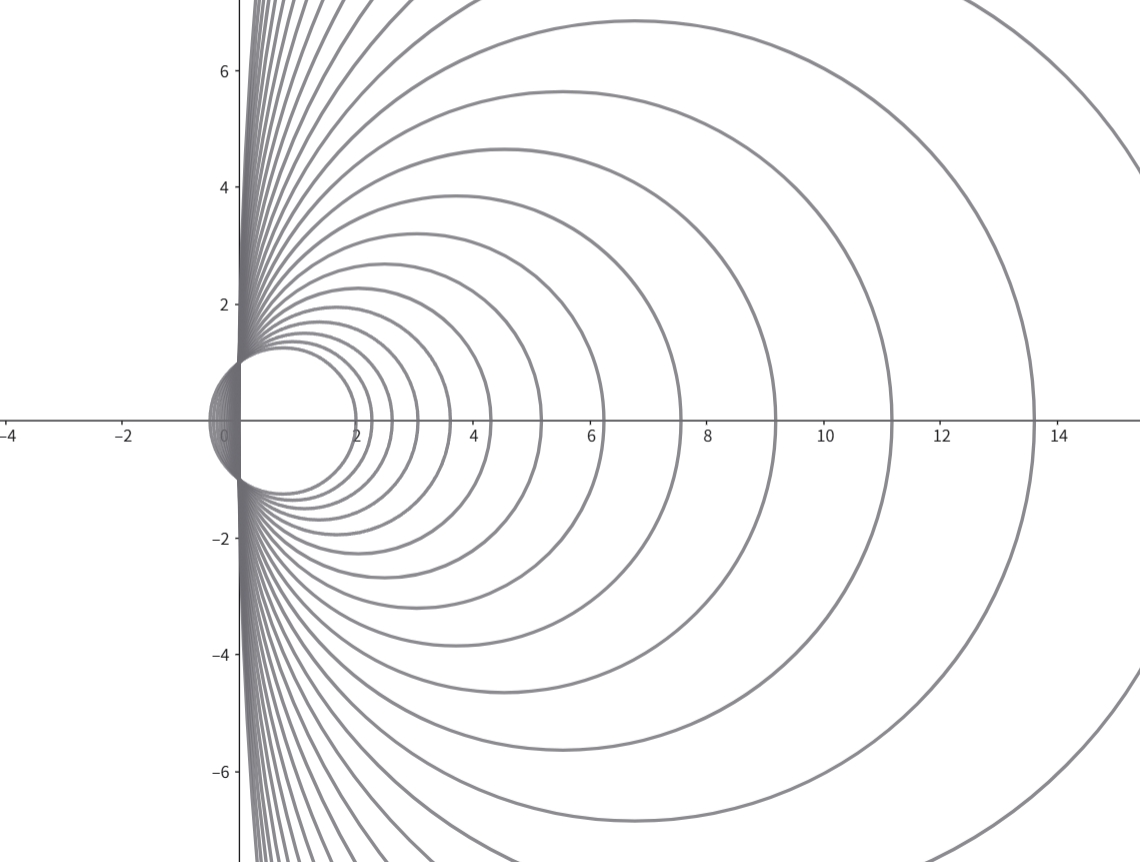}
\caption{The expanding circles, $|v|=0.6, \lambda = 1.25$.}
\label{expanding circle, v = (0,0.6) , lambda = 1.25}
  \end{subfigure}
  \hfill
  \begin{subfigure}[b]{0.48\linewidth}
    \centering
    \includegraphics[width=\linewidth,height=0.17\textheight,
  keepaspectratio]{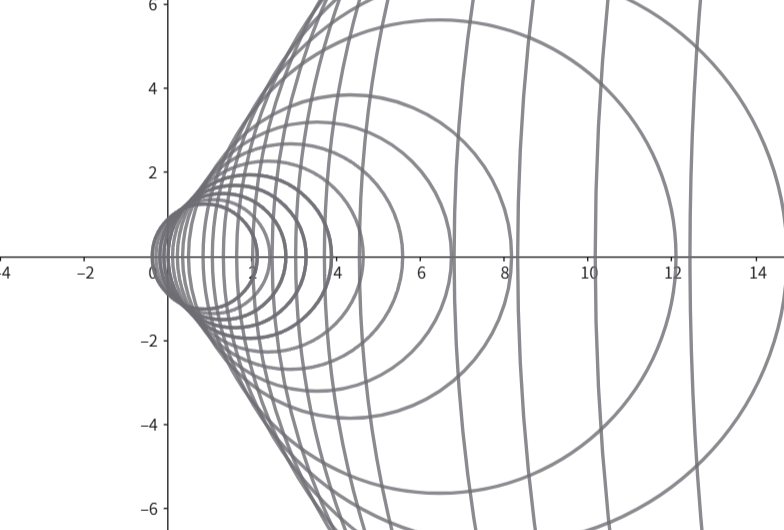}
\caption{The expanding circles, $|v|=0.7, \lambda = 1.25$.}
\label{expanding circle, v = (0,0.7) , lambda = 1.25}
  \end{subfigure}

  \caption{Circles evolving under the normalized flow:
    (a) shrinking circles; (b)--(d) expanding circles.}
  \label{fig:circles}
\end{figure}

\end{example}

\begin{example}\label{example grim reaper}
(\textbf{The Expanding Grim Reapers.}) Let $C$ be any constant and
$$
\Gamma_0(x,t) = (x, -\ln\cos x + t - C),\,x\in (-\pi/2,\pi/2),\,t\in R,
$$
where $C$ is a positive constant. $\Gamma_0$ satisfies
$$
(\Gamma_0)_t = (\Gamma_0)_{ss} + \mu (\Gamma_0)_s
$$
So there exists a reparameterization $x = x(u,t)$, such that
$$
\Gamma(u,t) = \Gamma_0(x(u,t),t)
$$
satisfies
$$
\Gamma_t = \Gamma_{ss}.
$$

Let $T_0> 0$ and
\begin{align*}
\gamma(u,{\sigma}) = [2(T_0- t)]^{-1/2}\Gamma(u,t) = e^{{\sigma}}\sqrt{\frac{1}{2T_0}}\Gamma(u,T_0(1-e^{-2{\sigma}})).
\end{align*}
Then
$$
\gamma_{{\sigma}} = \gamma_{{s}{s}}+\gamma
$$
where ${s}$ is the arc-length parameter of $\gamma$.

The initial data of $\gamma$ is
$$
\gamma(u, 0 ) = \sqrt{\frac{1}{2T_0}}\Gamma(u,0),
$$
which is a rescaling of the grim-reaper by factor $\sqrt{\frac{1}{2T_0}}$.

We shall notice that under the NF, such rescaling of a Grim Reaper expands at the rate $e^{{\sigma}}$.

Now suppose $C>0$. If $T_0> C$, then $\Gamma(u,T_0(1-e^{-2{\sigma}}))$ sweeps through the origin, so $\gamma({\sigma})$ sweeps through the origin.

\begin{figure}[H]
  \centering

  \begin{subfigure}[b]{0.32\linewidth}
    \centering
    \includegraphics[width=\linewidth]{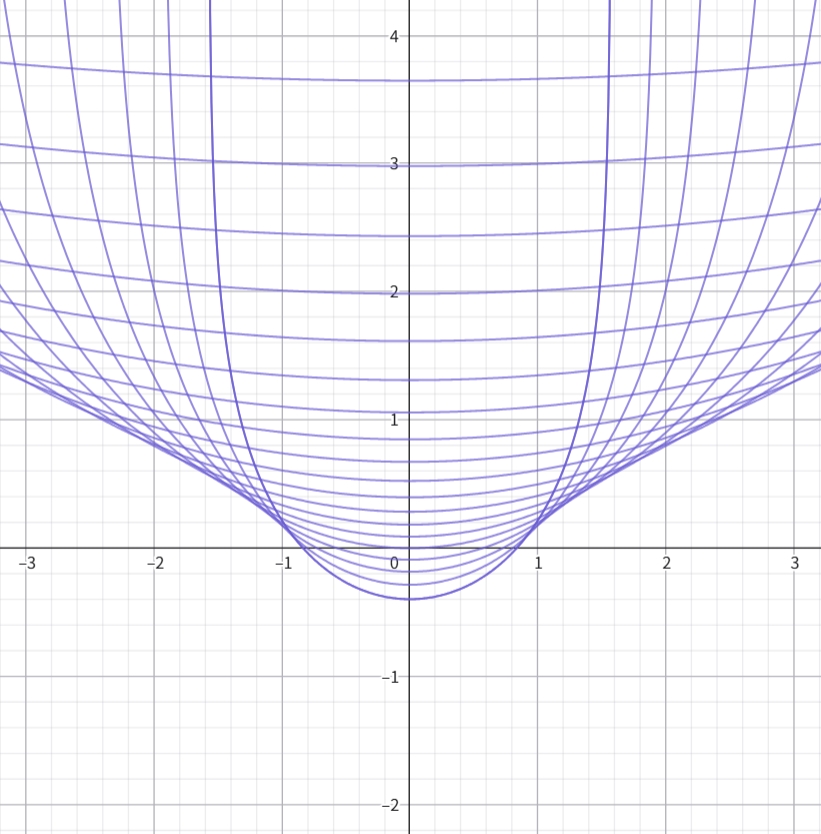}
\caption{The expanding Grim Reapers, $\lambda=1,C=0.4$.}
\label{moving grim reaper, lambda=1,C=0.4}
  \end{subfigure}
  \hfill
  \begin{subfigure}[b]{0.32\linewidth}
    \centering
    \includegraphics[width=\linewidth]{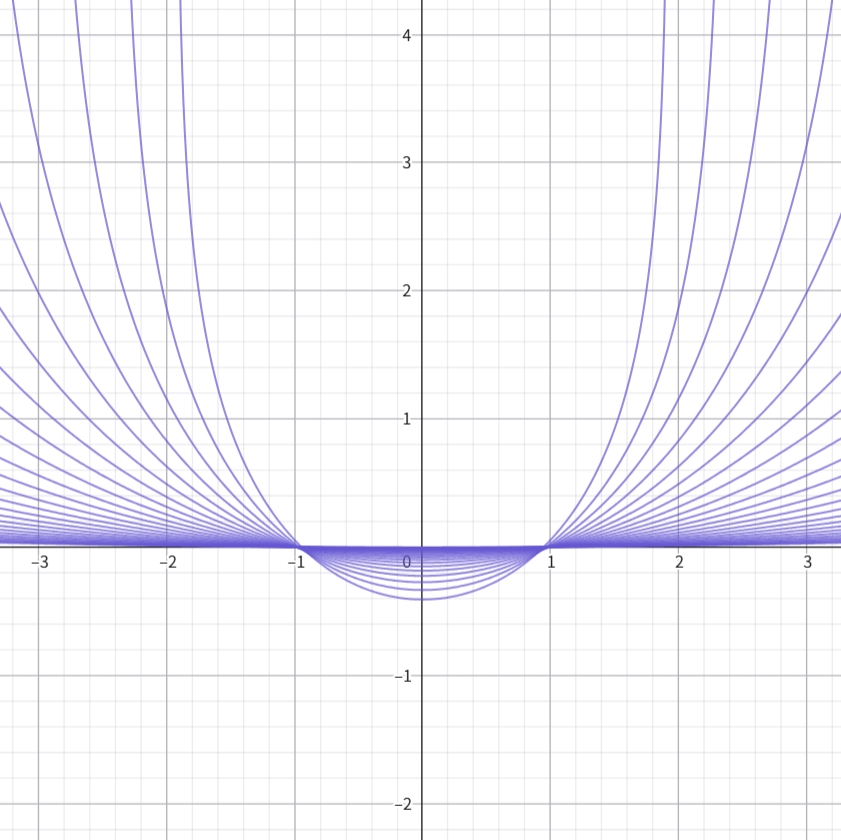}
\caption{The expanding Grim Reapers, $\lambda=1,C=0.5$.}
\label{moving grim reaper, lambda=1,C=0.5}
  \end{subfigure}
  \hfill
  \begin{subfigure}[b]{0.32\linewidth}
    \centering
    \includegraphics[width=\linewidth]{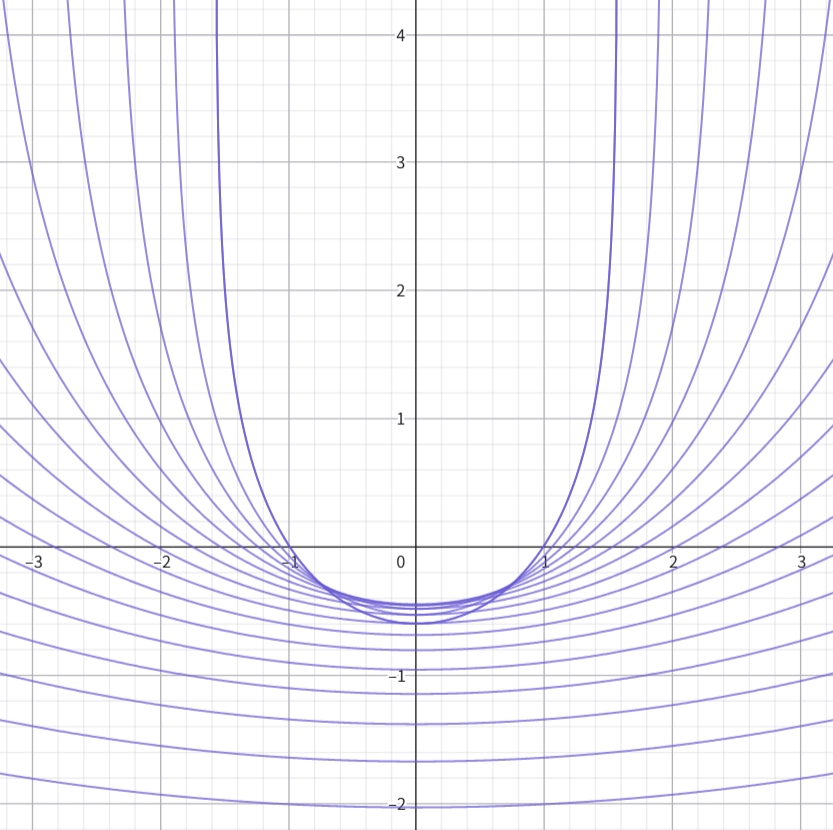}
\caption{The expanding Grim Reapers, $\lambda=1,C=0.6$.}
\label{moving grim reaper, lambda=1,C=0.6}
  \end{subfigure}

  \caption{Rescaled grim reapers evolving under the
    normalized flow with $\lambda=1$.}
  \label{fig:grim-reapers}
\end{figure}

\end{example}

\section{Avoidance and Barriers}
Our proof strongly relies on constructing barriers. So we provide the following avoidance principle for weighted normalized flows.

\begin{lemma}\label{strict barrier lemma}
Let $u(\theta,\sigma),v(\theta,\sigma)$ be two embedded convex planar $C^2$ curves evolving according to the following equations:
\[
u_\sigma=\eta(\theta,\sigma)k(u)N(u)+u,\qquad
v_\sigma=\mu(\theta,\sigma)k(v)N(v)+v,
\]
where $N$ is the inward unit normal and $k\geq0$.
Suppose there exists $C$ such that $0\leq\eta\leq C\leq\mu$, uniformly over the respective parameter domains and times. Moreover suppose $v$ is closed ($u$ can be not closed). For a noncompact outer curve, assume it is proper and bounds a convex region. Denote the corresponding convex closed regions by $K_u(\sigma)$ and $K_v(\sigma)$. If
\[
K_v(0)\subset\operatorname{int}K_u(0),
\]
then
\[
K_v(\sigma)\subset\operatorname{int}K_u(\sigma)
\]
for all common smooth existence times. In particular the two curves remain disjoint. No strict positivity of curvature or strict inequality between the weights is required.
\end{lemma}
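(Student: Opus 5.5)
The plan is to reduce to an unnormalized weighted curvature flow, encode the inclusion by a support-function gap, and show that gap is nondecreasing by a Hamilton-type minimum principle.

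\emph{Step 1: remove the $+u$ term.} Set $\tilde u=e^{-\sigma}u$ and $\tilde v=e^{-\sigma}v$. Curvature scales as $k(u)=e^{-\sigma}k(\tilde u)$, and the inward normal is unchanged. Hence
\[
\tilde u_\sigma=e^{-2\sigma}\eta\, k(\tilde u)N,\qquad \tilde v_\sigma=e^{-2\sigma}\mu\, k(\tilde v)N .
\]
With the new time $\tau=\int_0^\sigma e^{-2s}\,ds$ both become $\tilde u_\tau=\eta k N$ and $\tilde v_\tau=\mu kN$. At each fixed time the same homothety is applied to both regions, so $K_v(\sigma)\subset\operatorname{int}K_u(\sigma)$ is equivalent to the same statement for $\tilde K_v,\tilde K_u$. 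Tangential reparametrizations do not affect the sets, so only the normal speeds $\eta k_u$ and $\mu k_v$ matter.

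\emph{Step 2: encode inclusion by support functions.} For a unit vector $n=(\cos\phi,\sin\phi)$, let $h_u(\phi)=\sup_{x\in\tilde K_u}x\cdot n\in(-\infty,+\infty]$ and $h_v(\phi)=\max_{x\in\tilde K_v}x\cdot n$, which is finite because $v$ is closed. For closed convex sets, $\tilde K_v\subset\operatorname{int}\tilde K_u$ with $\tilde K_v$ compact is equivalent to $d(\tau):=\inf_\phi\,(h_u-h_v)(\phi)>0$. For a noncompact outer curve, properness and convexity make $h_u=+\infty$ outside a closed arc of directions and continuous in the extended sense. Together with boundedness of $h_v$, this shows the infimum is attained at some $\phi_0$ whenever $d$ is finite.

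\emph{Step 3: monotonicity of $d$.} I will show the lower Dini derivative satisfies $\underline{D}^+d(\tau)\ge 0$ at every time, which gives $d(\tau)\ge d(0)>0$. Fix a minimizing direction $\phi_0$, and take support points $x\in\partial\tilde K_v$ and $y\in\partial\tilde K_u$ with outward normal $n(\phi_0)$. Along the flow, $\partial_\tau h(\phi_0)=-(\text{weight})\cdot k$ evaluated at the support point, so
\[
\partial_\tau (h_u-h_v)(\phi_0)=-\eta k_u(y)+\mu k_v(x)\ \ge\ C\bigl(k_v(x)-k_u(y)\bigr),
\]
using $\eta\le C\le\mu$ and $k\ge0$. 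Minimality of $\phi_0$ gives $(h_u-h_v)''\ge0$ there. Since $h''+h=\rho=1/k$, this yields $\rho_u(y)\ge\rho_v(x)$, i.e. $k_v(x)\ge k_u(y)$. Hence the right-hand side is $\ge0$. Hamilton's lemma on derivatives of a minimum over a compact family, applied on the compact set of near-minimizing directions, then gives $\underline D^+d\ge0$. This uses only nonnegative curvature and the nonstrict inequality $\eta\le C\le\mu$, as the statement claims.

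\emph{Main obstacle: degenerate configurations.} Support functions need not be $C^2$ when the support point is not unique (flat segments) or when $k=0$ ($\rho=\infty$). I would avoid differentiating $h$ altogether and work directly with the curves. At a minimum of the gap, translate $\tilde K_v$ by $d\,n(\phi_0)$ so that it touches $\tilde K_u$ from inside at a common point. By the local graph comparison, two $C^2$ convex arcs tangent at a point with one inside the other, the inner graph has curvature $\ge$ the outer one at that point, so $k_v(x)\ge k_u(y)$ holds pointwise. If the support sets are segments, compute the one-sided derivative by choosing, among the support points, those that realize the infimum of the speed difference. Finally, for noncompact $u$, I must check that directions with $h_u=+\infty$ stay away from the minimizer, so that compactness of the minimizing set persists on short time intervals. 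Properness and continuity of the flow in $C^2$ on compact sets should give this.
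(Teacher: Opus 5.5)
Your proof is correct, but it takes a genuinely different route from the paper.

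\textbf{How the two arguments differ.} The paper argues by contradiction at a first contact time. At contact, the comparison $\mu k(v)\ge Ck(v)\ge Ck(u)\ge\eta k(u)$ only makes the relative inward speed nonnegative, and that does not contradict first contact. So the paper dilates the inner curve by $\lambda=1+\varepsilon e^{-(\sigma-\sigma_0)}$ about an interior point with $p'=p$. The extra term $-\frac{\lambda'}{\lambda}h_\varepsilon>0$ then supplies the strict sign. You instead show that the gap is nondecreasing after removing the homothety. Since $d(0)>0$, the weak inequality $-\eta k_u+\mu k_v\ge0$ already suffices and no perturbation is needed. You also get a quantitative bound for free: $\operatorname{dist}(u(\sigma),v(\sigma))\ge e^{\sigma}\operatorname{dist}(u(0),v(0))$. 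The cost is differentiating a minimum function, which the paper replaces by a sign check at a single contact point.

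\textbf{Making it rigorous.} The cleanest version is the one you hint at. While $K_v\subset\operatorname{int}K_u$, the support gap equals $\min_{\theta,\theta'}|u(\theta)-v(\theta')|$. This is a minimum of functions that are $C^1$ in time near the minimizing pairs, so Hamilton's trick applies directly and flat faces cause no trouble.

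\textbf{A claim to correct.} $h_u$ need not be continuous in the extended sense. Take the grim-reaper region $\{|x|<\pi/2,\ y\ge-\ln\cos x\}$, which the paper uses as a barrier. There $h_u=\pi/2$ at $n=(\pm1,0)$, and this value is not attained; $h_u=+\infty$ in directions just beyond. Lower semicontinuity still gives a minimizing direction. At that direction the support of $K_u$ is attained, because the compact set $K_v+\bar B_d\subset K_u$ meets the supporting line.

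Finally, keeping the minimizers in a compact parameter set needs locally uniform properness in time. The paper's first-contact argument also uses this implicitly.
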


\begin{proof}
Suppose that the original curves have a first contact at $\sigma_*>0$. Choose $p_*\in\operatorname{int}K_v(\sigma_*)$ and put
\[
p(\sigma)=e^{\sigma-\sigma_*}p_*.
\]
Since $v$ is closed and depends smoothly on time, there exist $0\leq\sigma_0<\sigma_*$ and $\rho>0$ such that
\[
B_\rho(p(\sigma))\subset K_v(\sigma)
\qquad(\sigma_0\leq\sigma\leq\sigma_*).
\]
Thus $(v-p)\cdot\nu\geq\rho$ for every outward unit normal $\nu$ of $v$.

For $\varepsilon>0$, define the $(1+\varepsilon)$-perturbation
\[
\lambda(\sigma)=1+\varepsilon e^{-(\sigma-\sigma_0)},
\qquad
v_\varepsilon=p+\lambda(v-p)=v+\varepsilon e^{-(\sigma-\sigma_0)}(v-p).
\]
Choose $\varepsilon$ sufficiently small that
$K_{v_\varepsilon}(\sigma_0)\subset\operatorname{int}K_u(\sigma_0)$.
Since $p$ lies in the interior of $K_v$ and $\lambda>1$,
\[
K_v(\sigma)\subset\operatorname{int}K_{v_\varepsilon}(\sigma).
\]
At $\sigma_*$, a contact point of $v$ and $u$ therefore lies in the interior of $K_{v_\varepsilon}$ and on $\partial K_u$. Consequently $K_{v_\varepsilon}(\sigma_*)$ is not contained in $K_u(\sigma_*)$. Hence $v_\varepsilon$ and $u$ have a first contact at some $\tau\in(\sigma_0,\sigma_*]$.

Using $p'=p$ and $k(v_\varepsilon)=\lambda^{-1}k(v)$, we calculate
\[
(v_\varepsilon)_\sigma
=\lambda^2\mu k(v_\varepsilon)N(v_\varepsilon)
+v_\varepsilon+\frac{\lambda'}{\lambda}(v_\varepsilon-p).
\]
At this first contact, the inward normals agree and interior tangency gives
\[
k(v_\varepsilon)\geq k(u)\geq0.
\]
Let $\nu=-N$ be the common outward normal and set
\[
h_\varepsilon=(v_\varepsilon-p)\cdot\nu\geq\lambda\rho>0.
\]
The position terms cancel at contact, so
\begin{align*}
\bigl((v_\varepsilon)_\sigma-u_\sigma\bigr)\cdot N
&=\lambda^2\mu k(v_\varepsilon)-\eta k(u)
-\frac{\lambda'}{\lambda}h_\varepsilon\\
&\geq-\frac{\lambda'}{\lambda}h_\varepsilon>0,
\end{align*}
because $\lambda^2\mu\geq\mu\geq\eta$ and $\lambda'=-(\lambda-1)<0$.
The weight comparison is valid even though the contact parameters on the two curves may differ, because of the uniform bound through $C$. However, first contact from the interior requires the displayed normal relative velocity to be nonpositive, a contradiction. The time dependence of $\lambda$ makes the inequality strict even when both contact curvatures vanish.
\end{proof}

\begin{proposition}\label{rescaling a flow}
Suppose $\Gamma$ is a solution to
$$
\Gamma_{\sigma} = a(\theta,{\sigma})\Gamma_{ss}+\Gamma.
$$
Then $\lambda\Gamma$ satisfies
$$
(\lambda\Gamma)_{\sigma} = \lambda ^2a(\theta,{\sigma})(\lambda\Gamma)_{ss}+\lambda\Gamma.
$$
\end{proposition}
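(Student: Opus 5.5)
The plan is a direct verification, done by tracking how each term of the equation scales when the curve is multiplied by the constant $\lambda>0$. The only real content is how the arc-length derivative changes under dilation. Set $\tilde\Gamma=\lambda\Gamma$, keeping the same parameter $\theta$ and the same time $\sigma$.

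\textbf{Step 1: the arc-length derivative.} Since $|\tilde\Gamma_\theta|=\lambda|\Gamma_\theta|$, the arc-length derivative of the rescaled curve is
\[
\frac{\partial}{\partial\tilde s}=\frac{1}{\lambda|\Gamma_\theta|}\frac{\partial}{\partial\theta}=\frac{1}{\lambda}\frac{\partial}{\partial s}.
\]
Applying this twice gives
\[
\tilde\Gamma_{\tilde s\tilde s}=\lambda^{-2}\,\lambda\,\Gamma_{ss}=\lambda^{-1}\Gamma_{ss}.
\]

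\textbf{Step 2: the time derivative.} Because $\lambda$ is constant,
\[
\tilde\Gamma_\sigma=\lambda\Gamma_\sigma=\lambda a\Gamma_{ss}+\lambda\Gamma .
\]
Substituting $\Gamma_{ss}=\lambda\tilde\Gamma_{\tilde s\tilde s}$ from Step 1 turns this into
\[
\tilde\Gamma_\sigma=\lambda^2a\,\tilde\Gamma_{\tilde s\tilde s}+\tilde\Gamma,
\]
which is the claimed equation.

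The computation is routine, so there is no real obstacle. The one point needing care is to state explicitly that $(\lambda\Gamma)_{ss}$ in the conclusion means differentiation with respect to the arc length of $\lambda\Gamma$, not that of $\Gamma$. It is also worth noting that the positivity and bounds on the weight are preserved up to the factor $\lambda^2$, which is how the result is used in Example~\ref{example circle}.
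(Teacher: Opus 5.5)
Your verification is correct and is exactly the routine computation the paper means when it says the proof is straightforward: dilation by $\lambda$ scales arclength by $\lambda$, so $(\lambda\Gamma)_{ss}=\lambda^{-1}\Gamma_{ss}$, and this produces the factor $\lambda^2$. The same computation works for any $\lambda\neq0$, since arclength scales by $|\lambda|$ and only $|\lambda|^2=\lambda^2$ enters.
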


\begin{proof}
The proof is straightforward.
\end{proof}

\begin{lemma}\label{small tube barrier}
Suppose $0<h\leq d,d\cdot h<\frac{\pi}{16}$ and $Q_{d,h} = \{y\geq -d,|x|\leq h\}$. If a family of simple closed convex curve $\gamma:S^1\times[0,+\infty)\to \mathbb{R}^2$ evolves according to
\begin{align*}
    \gamma_{\sigma} = a(\theta,{\sigma})\gamma_{ss}+\gamma
\end{align*}
where $a(\theta,{\sigma})>1$, and $\gamma(0)\subset Q_{d,h}$, then there exists ${\sigma}_0$ such that $\gamma({\sigma})$ does not enclose the origin as ${\sigma} > {\sigma}_0$.
\end{lemma}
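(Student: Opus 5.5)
The plan is to trap $\gamma(\sigma)$ inside a single explicit barrier built from a rescaled grim reaper, and then apply the avoidance Lemma~\ref{strict barrier lemma}. The barrier will be the \emph{outer} curve $u$, evolving by the unweighted NF, so $\eta\equiv1$. Our curve $\gamma$ will be the \emph{inner}, closed curve $v$, with weight $\mu=a>1$. Then the hypothesis $0\le\eta\le C\le\mu$ holds with $C=1$, and this is exactly why the assumption $a>1$ is needed.

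\textbf{Step 1: build the barrier.} Fix a width $w>2h$ and a depth $d'>d$, both to be chosen close to $2h$ and $d$. Consider the upward-opening grim reaper
\[
\Gamma_0(x,t)=\Bigl(x,\;-d'+\tfrac{w}{\pi}\bigl(-\ln\cos\tfrac{\pi x}{w}\bigr)+\tfrac{\pi}{w}t\Bigr),\qquad |x|<\tfrac w2 .
\]
Up to tangential reparametrization, it solves the CSF by translating upward with speed $\pi/w$. It is proper and bounds the convex region above it. Since its asymptotes are $x=\pm w/2$ with $w/2>h$, and its tip is at height $-d'<-d$, its convex region at $t=0$ contains $Q_{d,h}$ in its interior. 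In particular $K_\gamma(0)\subset\operatorname{int}K_{\Gamma_0}(0)$.

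\textbf{Step 2: pass to the normalized flow.} Following Example~\ref{example grim reaper} with $\lambda=1$, i.e.\ $T_0=\tfrac12$, we define
\[
u(\sigma)=e^{\sigma}\,\Gamma_0\bigl(\cdot,\tfrac12(1-e^{-2\sigma})\bigr).
\]
This solves the NF with $u(0)=\Gamma_0(0)$, and it exists for all $\sigma\ge0$. Lemma~\ref{strict barrier lemma}, with $u$ as the outer curve and $\gamma$ as the inner closed curve, then gives $K_\gamma(\sigma)\subset\operatorname{int}K_u(\sigma)$ for all $\sigma$.

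\textbf{Step 3: the origin leaves the barrier region.} The tip of $\Gamma_0(t)$ is at height $-d'+\tfrac{\pi}{w}t$. As $\sigma\to\infty$ the time $t$ tends to $\tfrac12$, so the tip eventually lies strictly above the origin provided
\[
\frac{\pi}{2w}>d',\qquad\text{i.e.}\qquad d'w<\frac{\pi}{2}.
\]
Since $dh<\pi/16$, the choice $w=2h(1+\varepsilon)$ and $d'=d(1+\varepsilon)$ with $\varepsilon$ small gives $d'w\approx2dh<\pi/8$, which satisfies this with room to spare. Let $\sigma_0$ be the normalized time at which the unnormalized tip reaches $0$, that is, $\tfrac12(1-e^{-2\sigma_0})=d'w/\pi$.

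For $\sigma>\sigma_0$, scaling by $e^\sigma$ preserves the fact that the origin lies outside the convex set. So $0\notin K_u(\sigma)\supset K_\gamma(\sigma)$, and $\gamma(\sigma)$ does not enclose the origin.

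\textbf{Expected main obstacle.} The delicate point is justifying Lemma~\ref{strict barrier lemma} for a noncompact outer barrier. I need to check that a first contact cannot happen "at infinity", and that the $(1+\varepsilon)$-perturbation of the closed inner curve $\gamma$ is compatible with the proper outer curve. Because $\gamma$ is compact and the reaper's region is a convex set whose boundary moves continuously, contact can only occur on a compact piece of the barrier, so the lemma's argument applies locally. A secondary point is the existence of the tangential reparametrization for the reaper, which is taken from Example~\ref{example grim reaper}.
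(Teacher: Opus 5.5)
Your architecture matches the paper's: an unweighted NF grim reaper as the outer barrier, $\gamma$ with weight $a>1$ as the closed inner curve, and then Lemma~\ref{strict barrier lemma}. However, Step~1 contains a false claim, and it is exactly the step where the hypotheses are needed.

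A grim reaper whose asymptotes $x=\pm w/2$ lie only slightly outside the strip $|x|\le h$ is already very high at $|x|=h$. Its height there is $-d'+\frac{w}{\pi}\ln\sec\frac{\pi h}{w}$. With $w=2h(1+\varepsilon)$ this is about $-d'+\frac{2h}{\pi}\ln\frac{2}{\pi\varepsilon}$, which tends to $+\infty$ as $\varepsilon\to0$. So the bottom corners $(\pm h,-d)\in Q_{d,h}$ lie below the reaper, and its convex region does not contain $Q_{d,h}$. Closed convex curves in $Q_{d,h}$ near these corners are therefore not enclosed by your barrier. Containment forces, roughly, $d'-d>\frac{w}{\pi}\ln\sec\frac{\pi h}{w}$. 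This competes with the sweeping condition $d'w<\pi/2$, and for $w$ close to $2h$ the two are incompatible. A symptom of the gap is that your argument never uses the hypothesis $h\le d$.

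The repair is to spend the factor $4$ that you called room to spare: take $w=4h$ and $d'=2d$. This is the paper's barrier $\frac{4h}{\pi}G(\cdot,0)$ with $C=\frac{\pi d}{2h}$. For $|x|\le h$ one has $\frac{\pi|x|}{w}\le\frac{\pi}{4}$, so the reaper's height is at most $-2d+\frac{4h}{\pi}\ln\sqrt2=-2d+\frac{2\ln 2}{\pi}h<-d$, because $h\le d$. Hence $Q_{d,h}$ lies strictly inside its convex region. The sweeping condition becomes $d'w=8dh<\pi/2$, which is exactly $dh<\pi/16$.

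With these parameters your Steps~2--3 go through as written. Your worry about the noncompact outer curve is already covered: Lemma~\ref{strict barrier lemma} is stated for proper noncompact outer curves bounding a convex region.
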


\begin{proof}
Let
$$
G_0(x,t)=\left(x,-\ln \cos x+t-\frac{\pi d}{2h}\right).
$$
and by a reparameterization, let
$$
G(u,t) = G_0(x(u,t),t)
$$
which satisfies
$$
G_t = G_{ss}.
$$

We transform the Grim Reaper during the time interval $[0,\frac{\pi^2}{32h^2}]$ into a new solution to the NF which satisfies:
$$
g_{\sigma} = g_{ss}+g,
$$
with the initial data
$$
g(u,0) = \frac{4h}{\pi}G(u,0).
$$

From Example ~\ref{example grim reaper} $\left(\lambda=\frac{4h}{\pi},T_0=\frac{\pi^2}{32h^2},C=\frac{\pi d}{2h}\right)$, and the fact that $\frac{\pi^2}{32h^2}>\frac{\pi d}{2h}$, $g$ sweeps through the origin in finite time.

For $|x|\leq h$, the initial graph has height at most $\frac{4h}{\pi}\log\sqrt2-2d<-d$, since $h\leq d$. Thus $Q_{d,h}$ lies strictly in its convex epigraph. So if $\gamma(0) \subset Q_{d,h}$, then $\gamma(0)$ is enclosed by $\frac{4h}{\pi}G(0)$. By Lemma ~\ref{strict barrier lemma}, $\gamma({\sigma})$ does not enclose the origin after $g$ sweeps through the origin.

\end{proof}

\begin{lemma}\label{barrier for a small ball inside}
For any $R>0$, there exists $r>0$, such that: if a family of simple closed convex curve $\gamma(\theta,{\sigma}) : S^1\times [0,+\infty) \to \mathbb{R}^2$ evolves according to
\begin{align*}
    \gamma_{\sigma} = a(\theta,{\sigma})\gamma_{ss}+\gamma
\end{align*}
where $a(\theta,{\sigma})>1$, $\gamma(0)\subset B_R$, and $\gamma(0)\cap B_r\neq \emptyset$, then there exists ${\sigma}_0$ such that $\gamma({\sigma})$ does not enclose the origin as ${\sigma} > {\sigma}_0$.
\end{lemma}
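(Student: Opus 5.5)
The plan is to reduce the statement to Lemma~\ref{small tube barrier} by showing that a convex curve inside $B_R$ that comes close to the origin is squeezed into a thin tube after a rotation. Given $R>0$, set $d=R$ and pick $h\le d$ with $dh<\frac{\pi}{16}$, for instance $h=\min\{R,\frac{\pi}{32R}\}$. We then choose $r>0$ small in terms of $R$ and $h$. Two remarks make the reduction legitimate. The equation $\gamma_\sigma=a\gamma_{ss}+\gamma$ is invariant under rotations about the origin. So is the condition $a>1$. Hence we may rotate coordinates freely.

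The argument splits into two cases. If the origin is not enclosed by $\gamma(0)$, apply Lemma~\ref{strict barrier lemma} with a line as the outer curve, using the moving lines of Example~\ref{example line}. Take a supporting line $\ell$ of the convex region $K_{\gamma(0)}$ separating it from the origin, and move it slightly toward the origin so that $K_{\gamma(0)}$ lies in the interior of the closed half-plane $H$ beyond it. Under NF, $H$ evolves to $e^{\sigma}H$, which never contains the origin. The line has $k=0$, so any weight comparison is trivially satisfied, and Lemma~\ref{strict barrier lemma} keeps $K_{\gamma(\sigma)}$ inside $e^\sigma H$ for all $\sigma$. Strictly speaking the lemma is stated with the noncompact curve as the inner one and the closed curve outside. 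Here the roles are reversed, but the same first-contact argument with the $(1+\varepsilon)$-perturbation applies, or one can shrink $\gamma$ about an interior point instead. In this case $\sigma_0=0$ works.

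If the origin is enclosed, pick a point $q\in\gamma(0)\cap B_r$. Rotate so that the supporting line of $K_{\gamma(0)}$ at $q$ is horizontal, with $K_{\gamma(0)}$ lying above it. Then $K_{\gamma(0)}\subset\{y\ge -r\}\cap B_R$. This region is not thin, so Lemma~\ref{small tube barrier} does not apply directly, and this is the main obstacle. The fix is to use the anisotropy of the rescaled Grim Reaper barrier, whose width is prescribed while its depth is arbitrary. Translate the picture by $(0,r)$, or equivalently rescale around the point $q$: the curve is contained in $\{y\ge-r\}\cap B_R$, so it lies in a Grim-Reaper epigraph of width $2R$ whose vertex sits at depth just below $-r$. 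The Grim Reaper barrier in the proof of Lemma~\ref{small tube barrier} needs two things. First, its vertex must lie below the region at distance about $r$. Second, its width must be slightly larger than $2R$. Redoing that computation with $h$ replaced by $R'=R+1$ and $d$ replaced by a small $r$ gives the sweep condition $\frac{\pi^2}{32R'^2}>\frac{\pi\cdot(\text{shift})}{2R'}$, where the shift $C$ is comparable to $r\cdot\frac{\pi}{4R'}$ plus the height of the Grim Reaper over $|x|\le R$. That height term is not small, so the plan is instead to use a Grim Reaper whose vertex is near $q$ and whose arms contain $B_R$. Concretely, take $\frac{4R'}{\pi}G_0$ shifted down by only $2r$. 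It contains $K_{\gamma(0)}$ once its graph over $|x|\le R$ stays below the part of $\partial B_R\cap\{y\ge -r\}$. This is guaranteed if $\partial B_R$ lies far above, which may fail near $x=\pm R$. In that case I would enlarge $R'$ to something like $4R$ so that the Grim-Reaper graph over $|x|\le R$ has height at most $O(R^2/R')$, which is small but still positive.

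The crux is therefore the explicit inequality showing that the NF-rescaled Grim Reaper, with initial scaling $\lambda=\frac{4R'}{\pi}$ and offset $C\approx \frac{\pi r}{2R'}$ in unrescaled units, sweeps the origin before time $T_0=\frac{1}{2\lambda^2}$. The condition $T_0>C$ from Example~\ref{example grim reaper} holds once $r<\frac{\pi}{16R'}$ or so. Containment of $K_{\gamma(0)}$ in the interior of the epigraph then follows from convexity: the region is bounded by the tangent line at $q$ and lies in $B_R$. With both facts in hand, Lemma~\ref{strict barrier lemma} applies with $a>1=C\ge$ the weight of the Grim Reaper. It gives the conclusion just as in Lemma~\ref{small tube barrier}, and $r$ depends only on $R$.
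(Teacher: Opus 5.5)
There is a genuine gap in the enclosed case, and tuning $R'$ and $r$ cannot repair it. Your non-enclosed case is fine. Note that Lemma~\ref{strict barrier lemma} already takes the closed curve $v$ as the inner one and allows the outer curve $u$ to be noncompact, so no role reversal is needed.

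\textbf{The obstruction.} Take any upward Grim Reaper barrier as in Example~\ref{example grim reaper}, with initial data $\lambda G_0(\cdot,0)$, width $\lambda\pi$ and vertex depth $\delta=\lambda C$.
\begin{itemize}
\item The sweeping condition $T_0=\frac{1}{2\lambda^2}>C$ is exactly $\delta<\frac{1}{2\lambda}$.
\item Since $-\log\cos u\geq u^2/2$, the graph satisfies $Y=-\lambda\log\cos(X/\lambda)-\delta\geq\frac{X^2}{2\lambda}-\delta$.
\item Hence the barrier can contain a point $(X_0,y_0)$ with $y_0\leq 0$ only if $X_0^2<2\lambda\delta<1$.
\end{itemize}
An admissible $K_{\gamma(0)}$ can, however, have a long nearly flat bottom at depth $r_0<r$. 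An example is a smoothing of $\{y\geq -r_0\}\cap B_{R-\epsilon}$, which is already in your normalization (supporting line at $q$ horizontal). It contains such points with $|X_0|$ close to $R$.

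So for $R>1$, and for every $r>0$, some admissible curve is not contained in any barrier of your type that sweeps the origin. Your sentence that containment ``follows from convexity'' is therefore false. Taking $R'\approx 4R$ leaves the graph height over $|x|\leq R$ of order $R$, not small compared with $r$. With your parameters, containment needs roughly $\frac{\pi R^2}{8R'}<2r$ and sweeping needs $r<\frac{\pi}{16R'}$. These are compatible essentially only when $R\leq 1$.

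That small-$R$ range is close to trivial: for $R<\sqrt{\pi}/4$, Lemma~\ref{small tube barrier} with $d=h=R$ already gives the conclusion without using $B_r$. The lemma is actually applied with $R=e^TR_0$, which is large. The underlying reason is scaling. A barrier with curvature $\kappa>0$ at its lowest point advances only about $\kappa/2$ during the available CSF time $T_0=1/2$. To contain a flat segment of half-length $X_0$, it must dip by about $\kappa X_0^2/2$.

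\textbf{The missing idea} is a compact convex barrier with a genuinely flat side, which carries no quadratic penalty. The paper's construction runs as follows.
\begin{itemize}
\item Take a smooth convex body $K_R\subset\{x\geq 0\}$ containing $[0,2R]\times[-R,R]$, with a flat boundary segment on $\{x=0\}$ covering $\{0\}\times[-R,R]$, and with area $>\pi$.
\item Its CSF $\Upsilon(t)$ then exists past $t=1/2$.
\item The strong maximum principle gives $c_R=\min_{\Upsilon(1/4)}x>0$. This may be tiny in terms of $R$, which the statement allows.
\item Choose $r<\min\{R,c_R\}/2$ and rotate so that a closest supporting line is $x=-r_0$ with $r_0<r$. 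Then $K_{\gamma(0)}$ lies strictly inside $K_R-(r,0)$.
\item Normalizing $\Upsilon(t)-(r,0)$ with $T_0=1/2$ gives a weight-one barrier. It lies in $\{x>0\}$ from $\sigma=\frac12\log 2$ on, because the unnormalized domains are nested.
\item Lemma~\ref{strict barrier lemma}, with inner weight $a>1$, finishes the proof.
\end{itemize}
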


\begin{proof}
It suffices to consider curves initially enclosing the origin. Choose a smooth closed convex body $K_R\subset\{x\geq0\}$ containing $[0,2R]\times[-R,R]$, with a flat boundary segment on $x=0$ containing $\{0\}\times[-R,R]$. Extend it sufficiently far to the right that $|K_R|>\pi$. Its planar CSF $\Upsilon(t)$ exists beyond $t=1/2$, since its extinction time is $|K_R|/(2\pi)$. It is strictly convex for $t>0$; the strong maximum principle for the $x$ coordinate gives
\[
c_R:=\min_{\Upsilon(1/4)}x>0.
\]
Choose $0<r<\min\{R,c_R\}/2$. If $\gamma(0)$ meets $B_r$, rotate about the origin so that a closest supporting line is $x=-r_0$ with $0<r_0<r$. Convexity and $\gamma(0)\subset B_R$ imply that $\gamma(0)$ lies strictly inside $K_R-(r,0)$.
Normalize $\Upsilon(t)-(r,0)$ using $T_0=1/2$. This gives an outer barrier $\upsilon_\sigma=\upsilon_{ss}+\upsilon$ defined for all $\sigma\geq0$. At $\sigma_*=\frac12\log2$, corresponding to $t=1/4$, the barrier lies in $x>0$. It remains there for later times because the unnormalized convex domains are nested. Lemma~\ref{strict barrier lemma} applies (the barrier is strictly convex at positive times), so $\gamma$ cannot enclose the origin for $\sigma\geq\sigma_*$. If the origin is not initially enclosed, the same conclusion follows from the supporting-line maximum principle.
\end{proof}

\section{Proof of Theorem \ref{main theorem}}

We may assume that $a_0 = 1$ by considering the rescaled flow $\sqrt{\frac{1}{a_0}}\Gamma$. Hence $\Gamma:S^1\times[0,+\infty)\to \mathbb{R}^2$ is a smooth mapping such that
\begin{align}\label{WNF-rescaled}
    \Gamma_{\sigma} = a(\theta,{\sigma})\Gamma_{ss}+\Gamma
\end{align}
where $1  < a(\theta,{\sigma})<a_1 <+\infty$. Also, for each ${\sigma}\in [0,+\infty)$, $\Gamma({\sigma})$ is a simple closed convex curve \textbf{enclosing the origin}.\\

\Needspace{5\baselineskip}
\noindent\textbf{Step 1: Calculating the enclosed area.}

Let $k = k(\theta,{\sigma})$ be the curvature of $\gamma({\sigma})$ at $\gamma(\theta,{\sigma})$. The evolution of the enclosed area $A({\sigma})$ is
$$
A'({\sigma})= 2A - \int_{\Gamma({\sigma})} a(\theta,{\sigma})kds.
$$

Using $1<a(\theta,t)<a_1$, we deduce
\begin{align}\label{area evolution inequality}
    2A - 2\pi a_1 <A' <2A-2\pi.
\end{align}

On the one hand, $\frac{d(A-\pi)}{d{\sigma}}<2(A-\pi)$. Notice that $A$ is positive and defined on $[0,+\infty)$, we conclude $A\geq\pi$.

On the other hand, $(A-\pi a_1)'>2(A-\pi a_1)$. So if there exists $t_0$ such that $A(t_0)> \pi a_1$, then
$$
A({\sigma}) \geq e^{2{\sigma}}\cdot\frac{A({\sigma}_0)-\pi a_1}{e^{2{\sigma}_0}}, \forall {\sigma}\geq {\sigma}_0.
$$

On the earlier compact time interval, decrease the positive exponential-growth constant if necessary. Hence either Theorem~\ref{main theorem}(1) holds for all times, or $\pi\leq A(\sigma)\leq\pi a_1$.\\

\Needspace{5\baselineskip}
\noindent\textbf{Step 2: Bound the curves.}

For convenience let $A_0 = \pi-1,A_1 = \pi a_1+1$, then $A_0<A({\sigma})<A_1$.

Let $R({\sigma}) = \sup_{x\in \Gamma({\sigma})}|x|$. By taking an evolving circle
$$
r(\theta,{\sigma}) = e^{{\sigma}-{\sigma}_0}(R({\sigma}_0)+\epsilon)(\cos \theta ,\sin \theta)
$$
as barriers (which satisfies $r_{\sigma} = 0r_{ss}+ r$) and use the avoidance principle, we deduce that for any ${\sigma}_0,{\sigma}\geq 0$ there holds
\begin{align}\label{a0}
    R({\sigma}_0+{\sigma})\leq e^{{\sigma}}R({\sigma}_0).
\end{align}

Choose $T,R_0$ sufficiently large such that
\begin{align}
\label{a1}&R_0>R(0);\\
\label{a2}&A_1 < e^{T/2}(2+32 A_1)^{-1}\frac{A_0}{256 A_1};\\
\label{a3}&R_0\geq 2\sqrt{2}e^{T}A_1R_0^{-1};\\
\label{a4}&\frac{\pi}{32A_1R_0^{-1}}>2A_1R_0^{-1};\\
\label{a5}&\frac{\pi}{32A_1R_0^{-1}} - 2A_1R_0^{-1}\geq \sqrt{2} \cdot \frac{R_0}{16A_1};\\
\label{a6}&\frac{128\sqrt{2}a_1A_1^2}{R_0^2}\leq (2+32 A_1)^{-1}\frac{A_0}{512 A_1}.
\end{align}

\begin{lemma}\label{the radious cannot keep large}
Let $T_0\in [0,+\infty)$. If $R_0\leq R(T_0)\leq 2R_0$, and $R(\sigma)\geq R_0$ for all ${\sigma}\in [T_0,T_0+T]$, then $A(T_0+T) > A_1$ (which leads to a contradiction).
\end{lemma}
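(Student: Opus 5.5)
The plan is to use the two hypotheses together. Along the whole interval $[T_0,T_0+T]$ the curve $\Gamma(\sigma)$ is long, with diameter at least $R_0$, but it encloses area at most $A_1$. A convex body with these two properties is forced into a thin strip, and on such a strip the factor $e^{\sigma}$ in the equation makes the transverse size grow exponentially. We then argue that after time $T$ the enclosed area exceeds $A_1$.

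\textbf{Step 1 (geometry at a fixed time).} Fix $\sigma\in[T_0,T_0+T]$ and let $p(\sigma)\in\Gamma(\sigma)$ realize $R(\sigma)$. Since the origin is enclosed, the segment $[0,p(\sigma)]$ lies in the convex region. Elementary convexity then gives a width bound: every point of the region is within distance of order $A(\sigma)/R(\sigma)\le 2A_1R_0^{-1}$ of the line through $0$ and $p(\sigma)$, for otherwise a triangle of area exceeding $A_1$ would be enclosed. By \eqref{a0}, $R(\sigma)\le 2R_0e^{T}$ on the interval, which is where \eqref{a3} enters. Likewise the lower bound $A\ge A_0$ gives a lower bound of order $A_0/(R_0e^{T})$ for this width. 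So for every such $\sigma$ the curve lies in a thin strip of half-width $h=2A_1R_0^{-1}$ about a line through the origin, and it reaches distance at least $R_0$ along that line. I would also show that the direction of this strip cannot rotate appreciably over $[T_0,T_0+T]$. The support function at the far tip moves at speed at most comparable to $e^{\sigma}$, because lines are stationary solutions up to the factor $e^{\sigma}$ (Example~\ref{example line}), while the tip must stay at distance at least $R_0$. I would therefore fix coordinates at $\sigma=T_0$ with the strip along the $y$-axis.

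\textbf{Step 2 (short side).} Next, control how far the curve extends on the side of the origin opposite to $p$. If it stayed within distance $d=\pi R_0/(32A_1)$ below the origin, then \eqref{a4} ($d>h$) would hold, with $dh$ at the threshold $\pi/16$ (I would shrink $h$ slightly to get the strict inequality). Lemma~\ref{small tube barrier}, after rescaling to weight $>1$ via Proposition~\ref{rescaling a flow}, would then make the curve stop enclosing the origin, contradicting the standing assumption. Hence on the opposite side the curve also extends to distance at least of order $R_0/A_1$; \eqref{a5} is the quantitative form of this. So the body is long in both directions along the strip, with a middle portion, near the origin, of length at least of order $R_0/A_1$ on each side.

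\textbf{Step 3 (growth of the width).} On this middle portion the two boundary arcs are nearly parallel. By convexity their total turning over a length of order $R_0/A_1$ inside a strip of width $h$ is at most of order $h\cdot A_1/R_0\sim A_1^2/R_0^2$. I would turn this into an averaged bound on $a\,k$ over the middle arcs of size $128\sqrt2\,a_1A_1^2R_0^{-2}$, as in \eqref{a6}. Then consider the support functions of $\Gamma(\sigma)$ in the two directions $\pm e_x$ perpendicular to the strip, restricted to the middle part; in their place I would use an inner barrier, namely a thin convex body inscribed in the middle part evolving by the $a_1$-weighted flow, admissible by Lemma~\ref{strict barrier lemma}. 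They obey $h_\sigma=h-a\,k$, so the small curvature bound gives $h(\sigma)\gtrsim e^{(\sigma-T_0)/2}h(T_0)$ for the transverse extent, starting from the lower bound of order $A_0/(R_0e^{T})$ from Step 1. Multiplying by the length of order $R_0/A_1$ of the middle portion, the area at $T_0+T$ is at least of order $e^{T/2}(2+32A_1)^{-1}A_0/(256A_1)$, which exceeds $A_1$ by \eqref{a2}.

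\textbf{Main obstacle.} The hard part is Step 3. A pointwise curvature bound is not available from convexity alone; only integrated turning is. The weight is merely bounded, so I cannot rely on an exact self-similar comparison. I would first try to realize Step 3 with an explicit inner barrier: a long thin rounded rectangle inscribed in the middle portion, evolving by the $a_1$-weighted flow (that is, a rescaled normalized CSF). Its flat sides move outward at rate exactly $e^{\sigma}$, while its caps recede at grim-reaper speed of order $a_1/h$. The constants \eqref{a4}--\eqref{a6} should make this recession negligible over time $T$, since the middle length is of order $R_0/A_1$ and $h\sim A_1/R_0$.
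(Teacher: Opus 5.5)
Your Steps 1 and 2 essentially match the paper's argument. The moving-line barriers with \eqref{a3} only bound the rotation of the long direction by $\pi/4$, but that is all that is needed to place the far points $p_\pm$ in a fixed frame. Your turning estimate for the middle arcs is also the paper's.

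The gap is in Step 3: neither mechanism you propose can make the area grow. The support-function equation $h_\sigma=h-ak$ needs a pointwise bound on $k$ at the support point, which, as you say yourself, is unavailable.

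The closed inner barrier fails for a structural reason. Any closed convex curve moving by $v_\sigma=a_1v_{ss}+v$ has total turning $2\pi$, so its enclosed area satisfies $A_v'=2A_v-2\pi a_1$. An inner body starts with $A_v<A(T_0)\le\pi a_1$. Its area therefore strictly decreases and it becomes extinct in finite time, so no such barrier can ever certify $A(T_0+T)>A_1$. Concretely, a rounded rectangle in the middle portion has length $O(R_0/A_1)$ and width at most $4A_1R_0^{-1}$, hence area $O(1)$ independently of $R_0$, and it disappears after normalized time $O(1/a_1)$. This agrees with your own cap estimate: a distance $O(R_0/A_1)$ is covered at speed of order $a_1R_0/A_1$. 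So \eqref{a4}--\eqref{a6} cannot make the recession negligible over an interval of length $T$.

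The missing idea is to track a localized, non-closed quantity. Put the long direction along the $x$-axis. Let $F(\sigma)$ be the area of the part of the convex domain inside the fixed slab $\{|x|<x_1\}$, $x_1=R_0/(32A_1)$. Write the two boundary arcs there as graphs $y^\pm$ solving $y_\sigma=y-xy_x+a\,y_{xx}/(1+y_x^2)$. Integrating over the slab gives
\[
F'=2F-x_1\bigl(y^+(x_1)+y^+(-x_1)-y^-(x_1)-y^-(-x_1)\bigr)-\int_{\Gamma(\sigma)\cap\{|x|<x_1\}}ak\,ds .
\]
The middle term is the area of the trapezoid spanned by the four points on the slab walls. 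By convexity this trapezoid lies in the slab region, so the term is at most $F$. Only the curvature inside the slab appears, and only in integrated form, which is exactly your turning bound. Unlike a closed barrier, the large curvature at the far ends never enters. By \eqref{a6} this gives $F'\geq F-F(T_0)/2$, hence $F\geq F(T_0)$, $F'\geq F/2$, and $F(T_0+T)\geq e^{T/2}F(T_0)$.

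For the initial value you need a lower bound in the middle of the body at time $T_0$ itself, where $R(T_0)\le 2R_0$. Your Step 1 bound $A_0/(R_0e^{T})$ on the maximal width does not serve. Multiplied by $e^{T/2}$ and by the length $O(R_0/A_1)$, it yields only $O(e^{-T/2})$, not the quantity you state. The paper instead uses concavity of the chord-length function, whose support contains $[-2x_1,2x_1]$ by Step 2, to get $F(T_0)>A_0/(256A_1(2+32A_1))$. Then \eqref{a2} gives $A(T_0+T)\geq F(T_0+T)>A_1$.
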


\begin{proof}
Set $R=R(T_0)$ and choose $a_0\in\Gamma(T_0)$ with $|a_0|=R$. Without loss of generality let $a_0 = (R,0)$, then by convexity of $\Gamma(T_0)$ and $A(T_0)<A_1$, we have
$$
\Gamma(T_0)\subset \{(x,y)|-2A_1R_0^{-1}<y<2A_1R_0^{-1}\}.
$$

By comparing to the barrier $r(x,T_0+{\sigma})=(e^{\sigma}x,\pm 2e^{\sigma}A_1R_0^{-1})$ (which are straight lines moving under $r_{\sigma} = 0\cdot r_{ss}+r $), we obtain:
\begin{align}\label{a7}
    \text{for }{\sigma}\in [T_0,T_0+T], \Gamma({\sigma})\text{ remains inside the region }\{ |y| < 2e^{T}A_1R_0^{-1}\}.
\end{align}

Let $a({\sigma})$ be one of the farthest points in $\Gamma({\sigma})$ and let $e_1({\sigma}),e_2({\sigma})$ be unit vectors such that $e_1({\sigma})\parallel a({\sigma}),e_2({\sigma})\perp e_1({\sigma})$. By (\ref{a3}) and (\ref{a7}), we have
\begin{align}\label{a8}
   \text{the angle between  $e_1(\sigma)$ and the $x$-axis is at most $\pi/4$.}
\end{align}

By the area bound $A({\sigma}) < A_1$, we have
\[
|p\cdot e_2|\leq 2A_1R_0^{-1}
\]
for any $p\in \Gamma({\sigma})$. Let $h=2A_1R_0^{-1}$, by (\ref{a4}),
\[
h<\frac{\pi}{16h} = \frac{\pi}{32A_1R_0^{-1}}
\]
Apply Lemma~\ref{small tube barrier} at time $\sigma$ with any longitudinal($e_1$ direction) cutoff $d\in (h,\frac{\pi}{16h})$. Since the origin remains enclosed for all later times, the projection must extend to at least $d$ in both longitudinal directions. Letting $d\to \frac{\pi}{16h} = \frac{\pi}{32A_1R_0^{-1}}$ yields the existence of both $p_+(t)$ and $p_-(t)$ in $\Gamma({\sigma})$, such that
$$
p_+\cdot e_1 \geq \frac{\pi}{32A_1R_0^{-1}},\,p_-\cdot e_1 \leq -\frac{\pi}{32A_1R_0^{-1}}.
$$

By (\ref{a5}) (\ref{a7}) and (\ref{a8}), we may let
$$
p_-\in \left(-\infty,-\frac{R_0}{16A_1}\right]\times (-2e^{T}A_1R_0^{-1} , 2e^{T}A_1R_0^{-1})
$$
and
$$
p_+ \in \left[\frac{R_0}{16A_1},+\infty\right)\times (-2e^{T}A_1R_0^{-1} , 2e^{T}A_1R_0^{-1}).
$$

\begin{figure}[H]
\centering
\includegraphics[scale = 0.2]{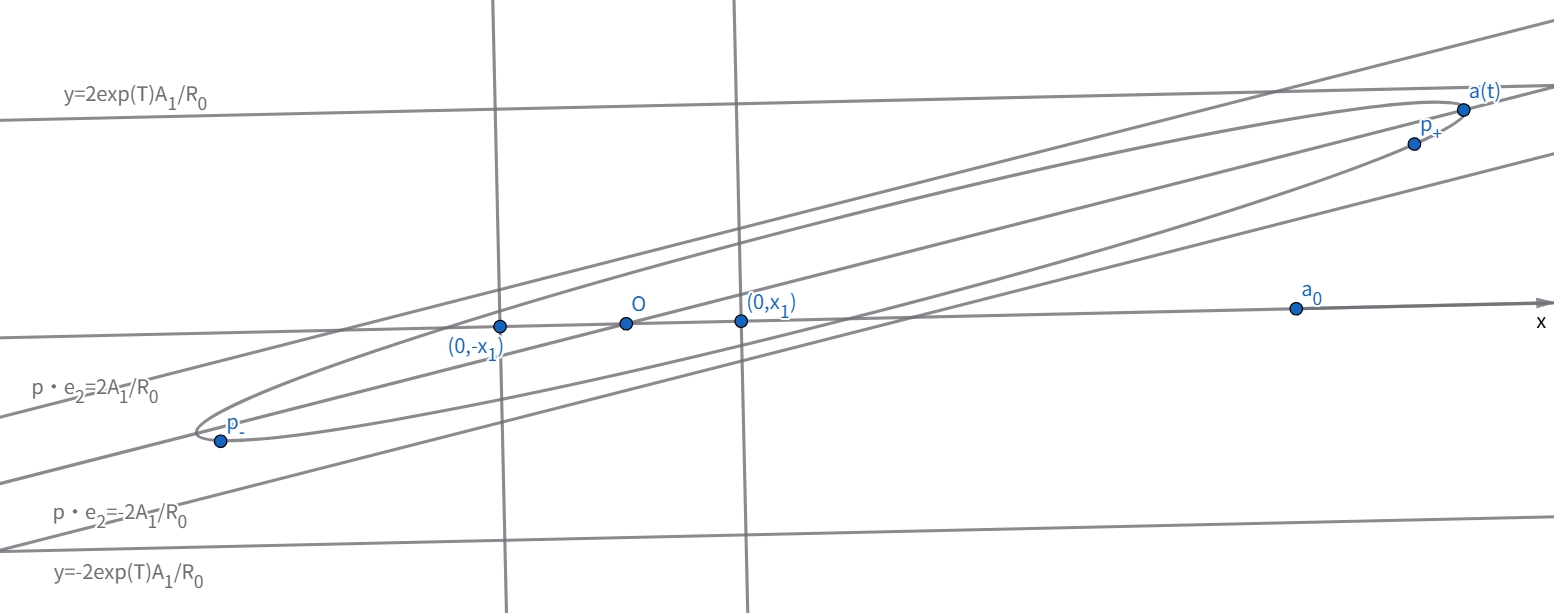}
\caption{Longitudinal extent and transverse control of the convex domain.}
\label{figure curve bounds}
\end{figure}

Let $x_1=R_0/(32A_1)$, $I=(-x_1,x_1)$, and let $F(\sigma)$ be the area of the convex domain inside $|x|<x_1$.

Write $h(x)$ for the vertical chord length at time $T_0$. This nonnegative function is concave on its support, whose length is at most $4R_0$ and which contains $[-2x_1,2x_1]$. Since $\int h>A_0$, there is $x_*$ with $h(x_*)>A_0/(4R_0)$. If $x_*\geq0$, join $(x_*,h(x_*))$ to $(-2x_1,0)$; if $x_*<0$, use $(2x_1,0)$. Concavity gives
\[
h(0)\geq \frac{h(x_*)}{1+32A_1}
 >\frac{A_0}{4R_0(1+32A_1)}.
\]
Concavity on $[-2x_1,2x_1]$ also implies $h(x)\geq h(0)/2$ for $|x|\leq x_1$. In particular,
\begin{equation}\label{a9}
F(T_0)>\frac{A_0}{256A_1(2+32A_1)}.
\end{equation}

Then we calculate the differential of $F({\sigma})$, by viewing the two components of $\Gamma(t)\cap \{|x|<x_1\} $ as graphs over $x$. Let $\{(x,y^+(x))|x\in I\}$ be the component of $\Gamma(t)\cap \{|x|<x_1\} $ which intersects with the positive $y$-axis, and $\{(x,y^-(x))|x\in I\}$ be the other component. Then
$$
y^+_t = y^+-xy^+_x + a\frac{y^+_{xx}}{1+(y^+_x)^2}.
$$
Notice that
$$
\frac{y^+_{xx}}{1+(y^+_x)^2} = -k(x,y^+)\frac{ds}{dx}.
$$
The graph equation also holds for $y^-$, whose curvature term has the opposite sign. So
\begin{align*}
F'({\sigma}) &= \int_{I} (y^+-xy^+_x)dx -\int_{I} (y^--xy^-_x)dx -\int_{\Gamma({\sigma})\cap \{|x|<x_1\}}a kds\\
&\geq 2F-[x(y^+-y^-)]_{-x_1}^{x_1}-a_1\int_{\Gamma({\sigma})\cap\{|x|<x_1\}}k\,ds\\
&= 2F - x_1(y^+(x_1)+y^+(-x_1)-y^-(x_1)-y^-(-x_1))- a_1\int_{\Gamma({\sigma})\cap \{|x|<x_1\}} kds\\
&\geq F-a_1\int_{\Gamma({\sigma})\cap \{|x|<x_1\}} kds.
\end{align*}
The last inequality holds because $x_1(y^+(x_1)+y^+(-x_1)-y^-(x_1)-y^-(-x_1))$ is the area of trapezoid $ABCD$.

Next we estimate $\int_{\Gamma({\sigma})\cap \{|x|<x_1\}} kds$.

\begin{figure}[H]
\centering
\includegraphics[scale = 0.7]{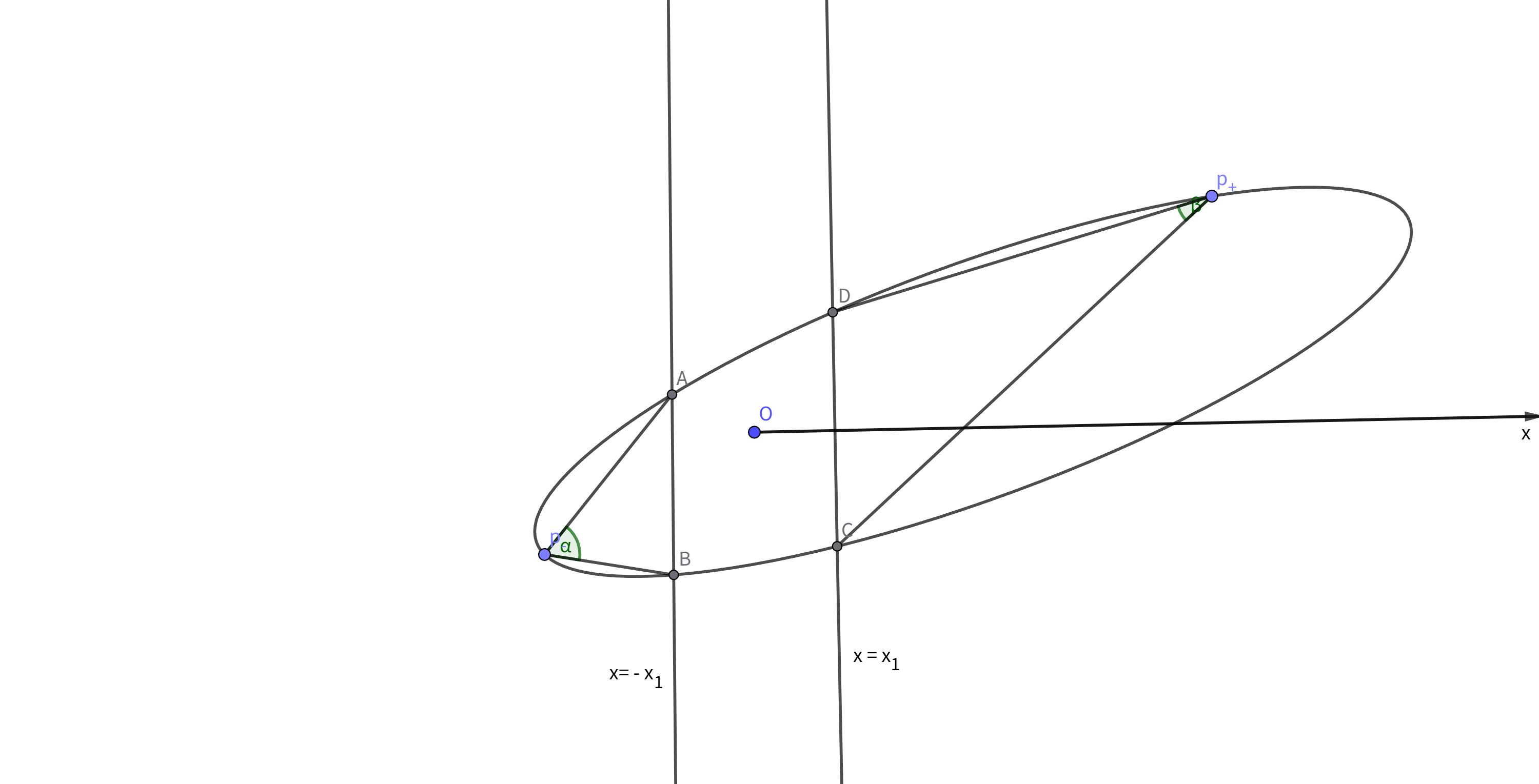}
\caption{The turning angles of the two arcs in the central strip are controlled by exterior chords.}
\label{figure in-slice curvature control}
\end{figure}

In Figure ~\ref{figure in-slice curvature control}, we have
$$
dist(p_-,AB)\geq \frac{R_0}{32 A_1}
$$
and (notice that the width of $\Gamma(t)$ in the $x$ direction is at least $\frac{R_0}{\sqrt{2}}$),
$$
AB\leq \frac{2A_1}{\left(\frac{R_0}{\sqrt{2}}\right)} \leq \frac{2\sqrt{2}A_1}{R_0}.
$$
So
$$
\alpha \leq 2\arctan \frac{AB}{2dist(p_-,AB)} \leq \frac{64\sqrt{2}A_1^2}{R_0^2}.
$$

For $\beta$ we have the same conclusion so

\begin{align*}
\int_{\Gamma({\sigma})\cap \{|x|<x_1\}} kds &\leq \alpha + \beta
\leq \frac{128\sqrt{2}A_1^2}{R_0^2}.
\end{align*}

By \eqref{a6} and \eqref{a9}, $a_1\int_{\Gamma(\sigma)\cap\{|x|<x_1\}}k\,ds\leq F(T_0)/2$. Thus $F'\geq F-F(T_0)/2$. Scalar comparison gives $F\geq F(T_0)$ on the interval, hence $F'\geq F/2$, and we conclude
$$
F(T_0+T)\geq e^{T/2}F(T_0)\geq e^{T/2}(2+32 A_1)^{-1}\frac{A_0}{256 A_1},
$$

By (\ref{a2}), $A(T_0+T)\geq F(T_0+T) > A_1$.
\end{proof}

Now turn back to the proof of the main theorem. Assume that $R({\sigma}_0)>e^TR_0$. Then by \ref{a0} and \ref{a1}, choose $\sigma_1$ to be the last time before $\sigma_0$ at which $R(\sigma_1)=R_0$. Continuity, \eqref{a0}, and \eqref{a1} give $\sigma_1<\sigma_0-T$ and:

(1) $R({\sigma}_1)=R_0$;

(2) During a time interval ${\sigma}\in [{\sigma}_1,{\sigma}_1+T]$, $R({\sigma})\geq R_0$.

This leads to a contradiction with Lemma \ref{the radious cannot keep large}. So we conclude that $\Gamma({\sigma})\subset B_{e^TR_0}$.

Theorem \ref{main theorem} now follows from Lemma \ref{barrier for a small ball inside}.

\section{Application: Singularities of CSF in pseudo-Euclidean spaces}

\noindent\textbf{Curve shortening flow in pseudo-Euclidean spaces.}

Let $\Gamma:S^1\times[0,T)\to \mathbb{R}^{n+2,k}$ be a smooth mapping such that $\theta\mapsto\Gamma(\theta,t)$ is a smooth spacelike immersion for each $t\in [0,T)$, and
$$
\Gamma_t = \Gamma_{ss}
$$
where
$$
\frac{\partial}{\partial s} = \frac{1}{|\Gamma_\theta|}\frac{\partial}{\partial \theta}
$$
denotes the partial derivative with respect to the arc-length parameter. Also, we require $T$ to be the maximal existence time.

In pseudo-Euclidean spaces, a spacelike curve may develop null tangents under the curvature flow. Precisely, null degeneration means $\limsup_{t\uparrow T}\max_{S^1}|\Gamma_s|_E=\infty$; equivalently, a sequence of Euclidean unit tangent directions approaches the null cone. \\

\noindent\textbf{The flow under projection.}

Suppose $\Sigma$ is a spacelike $2$-plane in $\mathbb{R}^{n+2,k}$ and $P_\Sigma$ is the orthogonal projection to $\Sigma$. Let $\hat \Gamma = P_\Sigma\circ \Gamma$ and denote by $\frac{\partial}{\partial\hat s}$ the partial derivative with respect to the arc-length parameter of $\hat \Gamma$.

Then
\begin{align}
\hat \Gamma_t = \hat \Gamma_{ss} = \left(\frac{d\hat s}{ds}\right)^2(\hat \Gamma)_{\hat s\hat s} + \frac{d^2\hat s}{ds^2}(\hat \Gamma)_{\hat s}.
\end{align}

Under a reparameterization (modulo the tangent flow), we may assume
\begin{align}\label{Projection of Gamma}
    \hat \Gamma_t = \left(\frac{d\hat s}{ds}\right)^2(\hat \Gamma)_{\hat s\hat s}.
\end{align}

Moreover, if we consider the normalized flow
\begin{align}
\gamma(\theta, {\sigma}) = [2(T-t)]^{-1/2}\Gamma(\theta,t),
\end{align}
where ${\sigma} = -\frac{\ln(T-t)}{2}+\frac{\ln T}{2}$. Then
$$
\gamma_{\sigma} = \gamma_{ss}+\gamma.
$$

Similarly, there exists a reparameterization $\hat \gamma$ of $P_\Sigma\circ \gamma$,
\begin{align}
    \hat{\gamma}_{\sigma}  = \left(\frac{d\hat s}{ds}\right)^2\hat{\gamma}_{\hat s\hat s} +\hat \gamma.
\end{align}
where $\hat s$ denotes the arc-length parameter of $\hat \gamma$.\\

\noindent\textbf{An alternative theorem for singularities.}

Based on Theorem \ref{main theorem}, we will prove the following singularity classification result.

Sections~\ref{sec:tangent-estimates}--\ref{sec:isoperimetric-convergence} prove the qualitative round-point alternative ,\textbf{under the assumption that the unit tangents $\Gamma_s$ are uniformly bounded}. Appendix~\ref{sec:exponential-convergence} proves the quantitative estimate~\eqref{main-exponential-rate}, completing the proof of Theorem~\ref{main theorem 2}.

\section{Proof of Theorem \ref{main theorem 2}, part \romannumeral1: Euclidean projection and maximal inclination}\label{sec:tangent-estimates}

Let $\Gamma: S^1\times [0,T) \to \mathbb{R}^{n+2,k}$ be a solution to the CSF with maximal existence time $T$, $P_{xy}\big |_{\Gamma(0)}$ is injective and the image $P(\Gamma(0))$ is a convex curve in the $xy$-plane.

Let $\mathbb{R}^{2+n+k}$ be the $(2+n+k)$-dimensional Euclidean space, $Q$ be the coordinate copy from $\mathbb{R}^{n+2,k}$ (with a fixed coordinate system) to $\mathbb{R}^{2+n+k}$  and $\Gamma^* = Q \circ \Gamma$.

\Needspace{5\baselineskip}
\noindent\textbf{Step 1: The Sturmian theorem and the convex projection condition.}\\

For a closed curve $l:S^1\to \mathbb{R}^{n+2+k}$ and a $2$-subspace $\Sigma\subset \mathbb{R}^{n+2+k}$, we say $l$ is projectionally convex to $\Sigma$ if $P_\Sigma^*\circ l$ is a one-to-one mapping from $S^1$ to a convex curve in $\Sigma$, where $P_\Sigma^*$ denotes the Euclidean orthogonal projection to $\Sigma$.

Following the Sturmian theorem in \cite{Ang88} and ideas in \cite{Sun24}, we have the following convexity-keeping theorem.
\begin{theorem}\label{convexity keeping}
Suppose $\Sigma$ is a $2$-subspace in $\mathbb{R}^{n+2+k}$ and $\Gamma^*(t_0)$ is projectionally convex to $\Sigma$. Then $\Gamma^*(t)$ is projectionally convex to $\Sigma$ for $t\in [t_0,T)$. Moreover, if $t\in (t_0,T)$, then:

(1)Every tangent vector of $\Gamma^*(t)$ is not perpendicular to $\Sigma$;

(2)The curvature of the planar curve $P_\Sigma^*\circ \Gamma^*(t)$ is nonzero.
\end{theorem}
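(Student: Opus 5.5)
Our plan is to reduce projectional convexity to a statement about zeros of a linear parabolic quantity, in the spirit of \cite{Ang88} and \cite{Sun24}. Fix $\Sigma$ with a Euclidean orthonormal basis $e_1,e_2$, and write $\Gamma^*(\theta,t)=Q\circ\Gamma$. Each Euclidean coordinate of $\Gamma^*$ is the corresponding coordinate of $\Gamma$ in the fixed frame, and linear functions of coordinates are invariant under $Q$. Hence every linear functional $\phi(X)=c\cdot X$ satisfies
\[
\partial_t\phi(\Gamma)=\partial_s^2\phi(\Gamma)=\frac{1}{|\Gamma_\theta|}\partial_\theta\Bigl(\frac{1}{|\Gamma_\theta|}\partial_\theta\phi(\Gamma)\Bigr),
\]
where $|\cdot|$ is the pseudo-Euclidean speed. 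Since $\Gamma$ stays spacelike on $[t_0,T)$, $|\Gamma_\theta|>0$ is smooth. On each compact subinterval the equation is therefore uniformly parabolic and one-dimensional with smooth coefficients, which is exactly the setting of Angenent's Sturmian theorem.

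The key step is to express convexity plus injectivity of $P^*_\Sigma\circ\Gamma^*$ through zero counts. For a unit vector $\nu\in\Sigma$ and $c\in\mathbb R$, set
\[
u_{\nu,c}(\theta,t)=\nu\cdot\Gamma^*(\theta,t)-c .
\]
This function also solves the linear heat-type equation above. The planar curve $P^*_\Sigma\circ\Gamma^*(t)$ is a one-to-one parametrization of a convex curve, possibly with flat pieces, exactly when every such $u_{\nu,c}$ has at most two sign changes, or equivalently at most two zeros counted suitably. When $t=t_0$ this holds by hypothesis.

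By Angenent, the number of zeros of $u_{\nu,c}(\cdot,t)$ is nonincreasing in $t$. It also drops strictly whenever a multiple zero occurs, and multiple zeros are isolated in time. This gives sign-change count $\le 2$ for all $t\ge t_0$ and all $(\nu,c)$. We will then rule out degenerate behavior at $t>t_0$, as follows.

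\textbf{(a) A tangent vector perpendicular to $\Sigma$.} Suppose $\Gamma^*_\theta(\theta_0,t)\perp\Sigma$ at some $t>t_0$. Take any $\nu$ and set $c=\nu\cdot\Gamma^*(\theta_0,t)$. Then $u_{\nu,c}$ has a multiple zero at $\theta_0$. If the count were already $\le 2$ with a double zero there, the strict drop would leave at most $0$ zeros just after $t$. That is impossible for a $\nu$ chosen so that the level line $\nu\cdot x=c$ cuts the projected convex curve transversally somewhere else. To make this precise we will perturb $c$ slightly: the strict-drop property at the earlier time $t-\epsilon$ then forces more than two zeros there, contradicting the monotone bound.

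\textbf{(b) Vanishing projected curvature at a point.} Here we apply the same argument with $\nu$ equal to the projected normal and $c$ the support value. A point of zero curvature on a convex curve with nonzero tangent produces a zero of $u_{\nu,c}$ of multiplicity at least $3$, or else a degenerate tangency. Counting again gives more than two zeros at slightly earlier times.

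Injectivity is preserved because a self-intersection, or a fold of the projection, would also create some $u_{\nu,c}$ with more than two sign changes.

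We expect the main obstacle to be the local degeneration arguments (a) and (b). Angenent's theorem only counts zeros, so converting ``multiple zero at time $t$'' into ``more than two zeros for nearby $c$ at time $t-\epsilon$'' requires a careful local analysis, namely the parabolic Taylor expansion near a degenerate zero. We will follow the corresponding argument in \cite{Sun24}, which treats the Euclidean case. We note that only the linearity of the coordinate equations and the spacelikeness of $\Gamma$ were used, so the indefinite metric plays no role.
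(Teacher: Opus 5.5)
Your plan works, but it runs the Sturmian argument on a different quantity than the paper does, and that moves where the work lies.

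\textbf{The paper's route.} The paper applies Angenent's theorem to the derivative of the height, $h=\langle\partial_\theta\Gamma^*,v\rangle$, not to $u_{\nu,c}=\nu\cdot\Gamma^*-c$. Since $h(\cdot,t)$ vanishes at the maximum and minimum of the height, it always has at least two zeros. So the bound of two inherited from $t_0$, together with the strict drop at a multiple zero, forces exactly two simple zeros for every $v$ and every $t>t_0$. Projectional convexity follows at once. Item (1) follows by choosing $v$ so that $\theta_0$ is not an extremum of the height, which would give $h$ a third zero. Item (2) comes from the strong maximum principle for $\hat k_t=(a\hat k)_{\hat s\hat s}+a\hat k^3$.

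\textbf{Where your route needs more.} Your crossing count $z(t)$ for $u_{\nu,c}$ has no such lower bound, and this matters in (b). Take the supporting line at a zero of $\hat k$. Then $u_{\nu,c}(\cdot,t)$ does not change sign and vanishes at $\theta_0$ to a finite order $m\geq4$. It may, however, have no zeros at all just after $t$, so the strict drop gives no contradiction. You genuinely need Angenent's local heat-polynomial structure, which turns this zero into $m\geq4$ sign changes at time $t-\epsilon>t_0$.

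In (a), by contrast, no perturbation of $c$ is needed. The function $u_{\nu,c}(\cdot,t)$ takes both signs, so $z(t+\epsilon)\geq2$, and the strict drop gives $z(t-\epsilon)\geq3$.

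\textbf{Two points of care.} First, your ``exactly when'' characterization is false without regularity: $\theta\mapsto(\cos\theta,0)$ meets every line with at most two sign changes but is not one-to-one. The order must therefore be:
\begin{enumerate}
\item the crossing bound for all lines;
\item then (a), giving regularity;
\item then unimodal height functions, a strictly monotone tangent angle, and rotation index one, hence an embedded convex projection.
\end{enumerate}
Second, at $t_0$ you need the sign-change form of the monotonicity, since $u_{\nu,c}(\cdot,t_0)$ may vanish on a flat piece.

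\textbf{Trade-off.} Your crossing count is the more geometric one and handles injectivity directly. It costs the finer local zero-structure theory and a planar characterization lemma, both of which the paper's derivative count avoids.
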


\begin{proof}
Let $v$ be any nonzero vector in $\Sigma$. Consider
$$
h(\theta,t) = \left\langle\frac{\partial\Gamma^*(\theta ,t)}{\partial\theta},v\right\rangle.
$$

$\Gamma^*(t_0)$ is projectionally convex to $\Sigma$, so $h(\cdot,t_0)$ changes sign twice.

We compute (here $s$ denotes the parameter with respect to $\Gamma$, not $\Gamma^*$)
\begin{align*}
    h_t = \langle(\Gamma^*)_{t\theta},v\rangle = \langle((\Gamma^*)_{ss})_\theta,v\rangle &= \left\langle\left(\left(\frac{\partial\theta}{\partial s}\right)^2(\Gamma^*)_{\theta\theta} + \frac{\partial^2\theta}{\partial s^2}(\Gamma^*)_{\theta}\right)_\theta,v\right\rangle\\
    &=ah_{\theta\theta}+bh_\theta+ch
\end{align*}
where $a = \left(\frac{\partial\theta}{\partial s}\right)^2$, $a,b,c$ are smooth.

By \cite{Ang88}, for every $t\in(t_0,T)$ the function $h(\cdot,t)$ has exactly two zeros, both simple: a multiple zero would force a strict decrease below two at a later time, which is impossible for the derivative of a nonconstant periodic height function. Since $v$ is arbitrarily chosen, $\Gamma^*(t)$ is projectionally convex to $\Sigma$.

Moreover, assume that (1) does not hold, then there exists $(\Gamma^*)_\theta(\theta_0,t_1)$ perpendicular to $\Sigma$, where $t_1\in (t_0,T)$.

Choose a nonzero $v\in \Sigma$ such that the function $f(\theta) = \langle \Gamma^*(\theta,t_1),v\rangle$ does not attain its maximum or minimum at $\theta_0$. Then $h(\cdot,t_1)$ has at least three zeros: the maximum point of $f$, the minimum point of $f$, and $\theta_0$. This contradicts to the Sturmian theorem.

Now we prove (2): By (1), $P_\Sigma^*\circ \Gamma^*$ is smooth for $t\in (t_0,T)$. Let $\hat \Gamma^*$ be a reparameterization of $P_\Sigma^*\circ \Gamma^*$, which satisfies the evolution equation \ref{Projection of Gamma}. It suffices to prove that $\hat \Gamma^*(t)$ is uniformly convex for $t\in (t_0,T)$.

Let
$$
a = \left(\frac{d\hat s^*}{ds}\right)^2,
$$
where $s, \hat s^*$ are arc-length parameters of $\Gamma, P_\Sigma^*\circ \Gamma^*$. Then the curvature $\hat k$ of $\hat \Gamma^*(t)$ satisfies
$$
\hat k_t = (a\hat k)_{\hat s \hat s} + a\hat k^3.
$$
Notice that $a>0$ for $t\in (t_0,T)$. Then by the strong maximum principle, $\hat k>0$ for $t\in (t_0,T)$.

The details of the proof can be found in \cite{Sun24}, Section 2 and Section 3.
\end{proof}

\Needspace{5\baselineskip}
\noindent\textbf{Step 2: Inclination estimate.}

Using the same method in the proof of Theorem \ref{convexity keeping}, we deduce that
$$
J(t)=\{\Sigma\in\operatorname{Gr}(2,2+n+k):
P_\Sigma^*\circ\Gamma^*(t)\text{ is embedded and strictly convex}\}
$$
monotonically increases in $t$.

For any subset $V\subset \mathbb{R}^{2+n+k}$, we define the maximal inclination
$$
\Delta(V) = \sup_{v\in V,v\neq 0} \angle (v,xy\text{-plane}).
$$

\begin{lemma}\label{inclination angle}
Let $E$ be the $xy$-plane and let $\Sigma$ be a two-dimensional subspace of a Euclidean space of dimension at least three.
\begin{enumerate}
\item If $v\ne0$ and $\Delta(\Sigma)+\angle(v,E)<\pi/2$, then $v\notin\Sigma^\perp$.
\item There is $0\ne v\in\Sigma^\perp$ with $\angle(v,E)=\pi/2-\Delta(\Sigma)$.
\end{enumerate}
\end{lemma}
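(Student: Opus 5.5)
The proof reduces to linear algebra of the pair of $2$-planes $\Sigma$ and $E$. Part (1) is a contrapositive estimate exploiting $\dim\Sigma=2$. Part (2) comes from a singular-value choice of an orthonormal basis of $\Sigma$.

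\emph{Part (1).} I argue the contrapositive: if $0\ne v\in\Sigma^\perp$, then $\Delta(\Sigma)\ge \pi/2-\angle(v,E)$.

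Normalize $|v|=1$ and put $\varphi=\angle(v,E)$. If $P^*_E v=0$ then $\varphi=\pi/2$ and there is nothing to prove. Otherwise write $v=\cos\varphi\, e+\sin\varphi\, n$ with $e\in E$, $n\perp E$ unit vectors, and let $e'\in E$ be a unit vector orthogonal to $e$.

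Since $\dim\Sigma=2$, the functional $w\mapsto e'\cdot w$ has a nontrivial kernel on $\Sigma$. Pick a unit $w\in\Sigma$ with $e'\cdot w=0$, so that $P^*_E w=(e\cdot w)e$ and $|P^*_{E^\perp}w|^2=1-(e\cdot w)^2$.

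The relation $v\cdot w=0$ reads $\cos\varphi\,(e\cdot w)=-\sin\varphi\,(n\cdot w)$. By Cauchy--Schwarz this gives $(e\cdot w)^2\cos^2\varphi\le\sin^2\varphi\,(1-(e\cdot w)^2)$, hence $|e\cdot w|\le\sin\varphi$. Therefore $\cos\angle(w,E)=|P^*_Ew|\le\sin\varphi$, i.e. $\angle(w,E)\ge\pi/2-\varphi$, and so $\Delta(\Sigma)+\varphi\ge\pi/2$.

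\emph{Part (2).} Consider the linear map $P^*_E|_\Sigma:\Sigma\to E$. By the singular value decomposition, choose an orthonormal basis $w_1,w_2$ of $\Sigma$ with $P^*_Ew_1\perp P^*_Ew_2$ and $|P^*_Ew_1|$ minimal. By compactness of the unit circle of $\Sigma$, this minimal value equals $\cos\Delta(\Sigma)$.

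Expanding $0=w_1\cdot w_2=P^*_Ew_1\cdot P^*_Ew_2+P^*_{E^\perp}w_1\cdot P^*_{E^\perp}w_2$ shows that the normal parts are orthogonal as well. This is the key observation.

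In the generic case $0<\Delta<\pi/2$, write $w_1=\cos\Delta\,a+\sin\Delta\,m$ with unit vectors $a\in E$, $m\perp E$. Set
\[
v=\sin\Delta\,a-\cos\Delta\,m .
\]
Then $v\cdot w_1=0$ by construction. Also $v\cdot w_2=\sin\Delta\,(a\cdot P^*_Ew_2)-\cos\Delta\,(m\cdot P^*_{E^\perp}w_2)=0$, because both orthogonality relations above hold. Its angle satisfies $\tan\angle(v,E)=\cot\Delta$, so $\angle(v,E)=\pi/2-\Delta$.

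Degenerate cases:
\begin{itemize}
\item If $\Delta=0$, then $\Sigma=E$, and any nonzero vector of $E^\perp$ works. Such a vector exists because the ambient dimension is at least three, and its angle is $\pi/2$.
\item If $\Delta=\pi/2$, then $P^*_Ew_1=0$. Take $v$ a unit vector in $E$ orthogonal to $P^*_Ew_2$, which is possible since $\dim E=2$. Then $v\perp w_2$, and $v\perp w_1$ because $w_1\perp E$. This $v$ has angle $0$.
\end{itemize}

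I expect the main obstacle to be Part (2). With a naive choice of the maximizing vector $w_1$ and an arbitrary completion $w_2$, the normal components need not be orthogonal. The plan's key point is to choose the basis by singular values and use $w_1\cdot w_2=0$ to get orthogonality of the $E^\perp$-components for free.
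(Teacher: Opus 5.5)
\textbf{Verdict: correct, by a somewhat different route.} Both parts check out, including the degenerate cases $\Delta(\Sigma)=0$ and $\Delta(\Sigma)=\pi/2$.

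\textbf{The paper's route.} When $\Delta(\Sigma)<\pi/2$ it writes $\Sigma$ as a graph $\{(x,Ax):x\in E\}$. Then $\|A\|=\tan\Delta(\Sigma)$ and $\Sigma^\perp=\{(-A^*y,y):y\in E^\perp\}$. Both parts follow at once from $|A^*y|\le\|A\|\,|y|$, with equality at a top singular vector. The case $\Delta(\Sigma)=\pi/2$ is handled separately through the rank of the projection onto $E$.

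\textbf{Your route.} You never describe $\Sigma^\perp$ explicitly.
\begin{itemize}
\item For (1) you argue with a test vector. Given $v\in\Sigma^\perp$, the kernel of $w\mapsto e'\cdot w$ on $\Sigma$ gives a unit $w\in\Sigma$ whose $E$-component is parallel to that of $v$. Cauchy--Schwarz applied to $v\cdot w=0$ then gives $\cos\angle(w,E)\le\sin\angle(v,E)$. This is coordinate-free and needs no case split, since it works even when $\Sigma$ is not a graph over $E$.
\item For (2) you take the singular value decomposition of $P^*_E|_\Sigma$ itself rather than of $A$; its smallest singular value is $(1+\|A\|^2)^{-1/2}=\cos\Delta(\Sigma)$. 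Your key observation is that $w_1\cdot w_2=0$ forces the $E^\perp$-components to be orthogonal. This lets you write the extremal normal vector explicitly as the rotated vector $\sin\Delta\,a-\cos\Delta\,m$.
\end{itemize}

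\textbf{Comparison.} The paper's proof is shorter, because one formula for $\Sigma^\perp$ gives both the inequality and its equality case. Yours is more elementary and self-contained, with no adjoint or operator-norm identification. The cost is that you must treat $\Delta=0$ and $\Delta=\pi/2$ separately in part (2).
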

\begin{proof}
If $\Delta(\Sigma)<\pi/2$, write $\Sigma=\{(x,Ax):x\in E\}$. Then $\|A\|=\tan\Delta(\Sigma)$ and
\[
\Sigma^\perp=\{(-A^*y,y):y\in E^\perp\}.
\]
For a nonzero such vector the angle to $E$ is at least $\arctan(1/\|A\|)=\pi/2-\Delta(\Sigma)$, with equality for a maximizing singular vector (and with the usual interpretation when $A=0$). This proves both assertions in this case. If $\Delta(\Sigma)=\pi/2$, the projection of $\Sigma$ onto $E$ has rank at most one; a nonzero vector in $E$ perpendicular to that image proves (2), while (1) is vacuous.
\end{proof}
The purely planar case requires no inclination estimate and is covered by the classical planar result.

Denote
$$
G(t) = \inf_{\Sigma \notin J(t)} \Delta(\Sigma),
$$
then $G(t)$ is monotone increasing in $t$.

Let
\begin{align}\label{three points, the definition of mathscr T}
    \mathscr{T}(t) = \{\Gamma^*(\theta_1,t)-\Gamma^*(\theta_2,t)\}\cup (\bigcup_{\theta\in S^1} \mathrm{Osc}_{\Gamma^*(t)}(\theta))
\end{align}

contain all chords and all vectors on the osculating subspaces of $\Gamma^*(t)$.

\textbf{Claim 1}:
$$
\frac{\pi}{2}-\Delta(\mathscr{T}(t))\leq G(t) \leq \frac{\pi}{2}.
$$

To show this, let $\Sigma$ be any $2$-subspace such that $\Delta(\Sigma)< \frac{\pi}{2}-\Delta(\mathscr{T}(t))$. It suffices to show that $\Sigma\in J(t)$.

By Lemma \ref{inclination angle}, $\Sigma$ is not perpendicular to any vector in $\mathscr{T}(t)$. Each chord of $\Gamma^*(t)$ is not perpendicular to $\Sigma$, so $P_\Sigma^*\big|_{\Gamma^*(t)}$ is a one-to-one projection. Each osculating subspace of $\Gamma^*(t)$ contains no vector perpendicular to $\Sigma$, so the curvature of  $P_\Sigma^*\big|_{\Gamma^*(t)}$ is nonzero. We conclude that $\Gamma^*(t)$ is projectionally convex to $\Sigma$ and $\Sigma\in J(t)$.

Let
$$
\mathscr{B}(t) = \bigcup_{\theta_1,\theta_2,\theta_3\text{ are distinct}}\mathrm{span}\{\Gamma^*(\theta_1,t)-\Gamma^*(\theta_2,t),\Gamma^*(\theta_1,t)-\Gamma^*(\theta_3,t)\}.
$$
Then $\overline{\mathscr{T}(t)} \subset \overline{\mathscr{B}(t)} $, so
$$
\Delta(\mathscr{T}(t)) \leq \Delta(\mathscr{B}(t)).
$$

\textbf{Claim 2}:
$$
\Delta(\mathscr{B}(t)) \leq \frac{\pi}{2}-G(t).
$$

To show this, let $\Sigma_1$ be a subspace parallel to some plane $\Sigma_2$ which includes three different points of $\Gamma^*(t)$. It suffices to show that
$$
\Delta(\Sigma_1)\leq \frac{\pi}{2}-G(t).
$$

By Lemma \ref{inclination angle}, let $v$ be a nonzero vector perpendicular to $\Sigma_1$ such that $\Delta(\{v\}) = \frac{\pi}{2}-\Delta(\Sigma_1)$. Choose $0\ne w\in v^\perp\cap xy\text{-plane}$ and let $\Sigma = \mathrm{span}\{v,w\}$, then $\Delta(\Sigma) = \Delta(\{v\}) = \frac{\pi}{2}-\Delta(\Sigma_1)$.

$P_\Sigma^*$ maps the three points into a straight line (perpendicular to $v$), which means $\Sigma \notin J(t)$ and $G(t)\leq\Delta(\Sigma) = \frac{\pi}{2}-\Delta(\Sigma_1)$. This finishes the proof of Claim 2.

We conclude that
\begin{align}\label{three points}
    \Delta(\mathscr{B}(t)) \leq \frac{\pi}{2}-G(t) \leq \Delta(\mathscr{T}(t)) \leq \Delta(\mathscr{B}(t)).
\end{align}
which yields the following lemma:
\begin{lemma}\label{three point monotonicity}
$\Delta(\mathscr{B}(t)) = \Delta(\mathscr{T}(t)) = \frac{\pi}{2}-G(t) $ monotonically decreases in $t$.
\end{lemma}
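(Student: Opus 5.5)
The lemma follows by combining the chain of inequalities \eqref{three points} with the monotonicity of $J(t)$. My plan is to establish each link separately and then read off both the equalities and the monotonicity.

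First, the equalities. Claim 1 gives $\frac{\pi}{2}-\Delta(\mathscr T(t))\le G(t)$, that is, $\frac{\pi}{2}-G(t)\le\Delta(\mathscr T(t))$. Claim 2 gives $\Delta(\mathscr B(t))\le\frac{\pi}{2}-G(t)$. For the remaining link $\Delta(\mathscr T(t))\le\Delta(\mathscr B(t))$, I will use $\overline{\mathscr T(t)}\subset\overline{\mathscr B(t)}$:
\begin{itemize}
\item A chord $\Gamma^*(\theta_1)-\Gamma^*(\theta_2)$ lies in the span defined by $\theta_1,\theta_2$ and any third parameter $\theta_3$.
\item An osculating plane at $\theta$ is the limit of the three-point planes through $\theta,\theta+h,\theta+2h$ as $h\to0$. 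This uses that $\Gamma^*_\theta$ and $\Gamma^*_{\theta\theta}$ are independent wherever $\mathrm{Osc}$ is defined, together with a second-order Taylor expansion.
\end{itemize}
Since $\angle(\cdot,E)$ is continuous on nonzero vectors, and the supremum over a set equals the supremum over its closure (after normalizing to unit vectors), this containment gives $\Delta(\mathscr T)\le\Delta(\mathscr B)$. The cyclic chain of inequalities then collapses to equalities.

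Second, the monotonicity. It suffices to show that $G(t)$ is nondecreasing. Take $t_1<t_2$. By the argument of Theorem~\ref{convexity keeping}, applied with initial time $t_1$, any $\Sigma\in J(t_1)$ has $\Gamma^*(t)$ projectionally convex to $\Sigma$ for all $t\ge t_1$, with strictly positive projected curvature for $t>t_1$. Hence $\Sigma\in J(t_2)$, so $J(t_1)\subset J(t_2)$. Taking complements, $\{\Sigma\notin J(t_2)\}\subset\{\Sigma\notin J(t_1)\}$. An infimum over a smaller set is larger, so $G(t_1)\le G(t_2)$, and therefore $\frac{\pi}{2}-G(t)$ is nonincreasing.

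There is one degenerate case: if the complement of $J(t)$ is empty, the infimum is $+\infty$. Claim 2 excludes this, because the plane $\Sigma$ it constructs collapses three points onto a line and so is not in $J(t)$. The complement is therefore always nonempty.

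I expect the main obstacle to be the Sturmian step inside the inclusion $J(t_1)\subset J(t_2)$. Theorem~\ref{convexity keeping} is phrased in terms of a convex, possibly only weakly convex, projection at the initial time. Membership in $J$, however, requires the projection to be embedded and strictly convex, and that must persist. I would handle this through the zero count of $h=\langle\Gamma^*_\theta,v\rangle$ for every $v\in\Sigma$: it has exactly two simple zeros at $t_1$, and Angenent's theorem keeps it at exactly two simple zeros afterwards. Strict convexity then follows from part~(2) of the theorem, via the strong maximum principle for $\hat k$.
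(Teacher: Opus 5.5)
Your proposal is correct and takes the same route as the paper. The equalities come from collapsing the cyclic chain \eqref{three points}, built from Claim~1, Claim~2 and the inclusion $\overline{\mathscr T(t)}\subset\overline{\mathscr B(t)}$; the monotonicity comes from $J(t_1)\subset J(t_2)$ via the Sturmian argument of Theorem~\ref{convexity keeping}, which makes $G$ nondecreasing. Your extra details only make explicit what the paper leaves implicit: the three-point-plane limit for osculating planes, and the nonemptiness of the complement of $J(t)$, which, like the paper, uses ambient dimension at least three.
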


\Needspace{5\baselineskip}
\noindent\textbf{Step 3: Bounding the normalized flow.}\\

Fix $0<t_0<T$. We first obtain an inclination bound without assuming a bound for $|\Gamma_s|_E$.

By Theorem~\ref{convexity keeping}, the projection at $t_0$ is a regular embedded strictly convex curve. Chord directions have a compact closure after adding tangent directions; projection is nonzero on that closure. Projection is also an isomorphism on every osculating plane. Compactness therefore gives
$$
\Delta(\mathscr{T}(t_0)) < \frac{\pi}{2}.
$$

Let $c_1 = \tan \Delta(\mathscr{T}(t_0))< +\infty$. Using Lemma \ref{three point monotonicity}, for each $v\in\mathscr B(t)$, $t\geq t_0$,
\begin{align}\label{inclination}
    z_i^2(v),w_j^2(v)\leq c_1^2(x^2(v)+y^2(v)).
\end{align}

Hence
$$
\frac{ds}{d\hat s} \leq \sqrt{1+nc_1^2}.
$$

Let $D(t)$ be the closed convex domain enclosed by $\hat\Gamma(t)$, and denote its unnormalized projected area by
\[
A_{\mathrm{un}}(t):=\operatorname{area}(D(t)).
\]
The projected-area evolution and the preceding inclination bound give
\[
A_{\mathrm{un}}'(t)
=-\int_{\hat\Gamma(t)}\left(\frac{d\hat s}{ds}\right)^2\hat k\,d\hat s
\leq-\frac{2\pi}{1+nc_1^2}.
\]
Since $A_{\mathrm{un}}>0$, the maximal time $T$ is finite. From now on assume that alternative (1) does not occur. Smoothness on compact time intervals then gives a uniform bound for $|\Gamma_s|_E$ on $[0,T)$, and hence
$$
\frac{d\hat s}{ds} \leq c_2
$$
for some constant $c_2$.

The compact convex domains $D(t)$ are nested.

\textbf{If $A_{\mathrm{un}}(t)$ admits a positive lower bound}, then there exists a small ball inside $\bigcap_{t\in [t_0,T)} D(t)$. Without loss of generality suppose $B_r \subset \bigcap_{t\in [t_0,T)} D(t)$, and $D(0)\subset B_R$. Then inside the slice $\Omega(r/2) = \{|x|<r/2\}$, each component of $\hat\Gamma\cap \Omega(r/2)$ can be parameterized by $x$ and viewed as a graph flow. We write
$$
r(x,t) = \Gamma(\theta(x,t),t) = (x,y(x,t),z_1(x,t)...,z_n(x,t),w_1(x,t),...,w_{k}(x,t)).
$$

Apparently, each coordinate function is uniformly bounded.

In the fixed-$x$ graph gauge, the tangential velocity required to keep the first coordinate equal to $x$ cancels the first-order tangential term in $\Gamma_{ss}$. Thus each dependent coordinate, including $y$, satisfies exactly
$$
y_t = \frac{1}{1+y_x^2+\sum (z_i)_x^2-\sum (w_j)_x^2}y_{xx}.
$$

The line segments joining $(\pm r,0)$ and $(x,y(x)) $ are enclosed by $\hat \Gamma(t)$, so
$$
|y_x|\leq \frac{|y|}{r-|x|}\leq \frac{2R}{r}.
$$
So $y_x$ is uniformly bounded.

By \eqref{inclination}, $|(z_i)_x|,|(w_j)_x|\leq c_1\sqrt{1+y_x^2}$. Moreover,
$$
\frac{1+y_x^2}{1+y_x^2+\sum (z_i)_x^2-\sum (w_j)_x^2} =  \left(\frac{d\hat s}{ds}\right)^2 \in \left[\frac{1}{1+nc_1^2},c_2^2\right]
$$
so the graph coefficient has both a positive lower bound and a finite upper bound, uniform in these charts.

By Theorem~\ref{Nash and Schauder estimate}, $y_{xx},(z_i)_{xx},(w_j)_{xx}$ are uniformly bounded in $\Omega(r/16) = \{|x|< r/16\}$.

The curvature vector $\Gamma_{ss}$ of $\Gamma$ can be expressed as
$$
    \Gamma_{ss} = \frac{\langle\Gamma_x,\Gamma_x\rangle\Gamma_{xx} - \langle\Gamma_x, \Gamma_{xx}\rangle\Gamma_x}{\langle\Gamma_x,\Gamma_x\rangle^2},
$$

so the curvature vector is also uniformly bounded in $\Omega(r/16) = \{|x|< r/16\}$.

Rotate the graph directions within the $xy$-plane and cover the curve by finitely many smaller charts. Theorem~\ref{Nash and Schauder estimate} gives uniform bounds for all spatial derivatives and the time derivatives on every terminal time interval. The graph metrics have a uniform positive lower bound. Equivalently, use the polar angle of the convex projection to put these charts on a fixed circle. The resulting immersions converge smoothly as $t\uparrow T$ to a smooth spacelike immersion. Local existence from this limiting immersion extends the geometric flow past $T$, a contradiction. Thus $A_{\mathrm{un}}(t)\to0$ as $t\uparrow T$.

Choose a point in the nonempty intersection $\bigcap_{t<T}D(t)$ and translate its planar coordinates to the origin. It lies in the interior of $D(t)$ for every $t<T$: at any positive time the strictly positive inward normal speed makes $D(t')\subset\operatorname{int}D(t)$ for $t'>t$.

Let $A(\sigma)$ denote the normalized projected area enclosed by $\hat\gamma(\sigma)$. The area scaling gives
\[
e^{-2\sigma}A(\sigma)
=\frac{A_{\mathrm{un}}(T-Te^{-2\sigma})}{2T}\longrightarrow0
\qquad(\sigma\to\infty).
\]

Now we apply Theorem \ref{main theorem}: Case (1) cannot happen due to $e^{-2{\sigma}}A({\sigma})\to 0$, so there exist $r,R$ such that $\hat \gamma(\theta,{\sigma})\in B_R\setminus B_r$ for each $(\theta,\sigma)$.

\begin{proposition}\label{coordinate bound for csf}
$\Gamma(t)$ shrinks to a point.
\end{proposition}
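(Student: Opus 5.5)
We prove that the whole curve, not only its planar projection, is uniformly bounded in the normalized picture; this forces the unnormalized curve to collapse at rate $\sqrt{T-t}$. The inclination estimate \eqref{inclination} and the annulus bound $\hat\gamma(\theta,\sigma)\in B_R\setminus B_r$ from Theorem~\ref{main theorem} are the two inputs.

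First we control the projected picture. Undo the normalization: $\hat\Gamma(\theta,t)=\sqrt{2(T-t)}\,\hat\gamma$, so
\[
P_{xy}\Gamma(t)\subset B_{R\sqrt{2(T-t)}}.
\]
Its diameter is therefore at most $2R\sqrt{2(T-t)}\to0$. Moreover the chosen point of $\bigcap_{t<T}D(t)$ has been translated to the origin and lies in every $D(t)$. Hence the $x,y$ coordinates of $\Gamma(\theta,t)$ converge uniformly to $0$.

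Next we control the transverse coordinates. Chords lie in $\mathscr T(t)\subset\overline{\mathscr B(t)}$, so by \eqref{inclination} (which is valid for $t\ge t_0$) every chord $v=\Gamma^*(\theta_1,t)-\Gamma^*(\theta_2,t)$ satisfies
\[
|z_i(v)|,\ |w_j(v)|\le c_1\sqrt{x(v)^2+y(v)^2}\le 2c_1R\sqrt{2(T-t)}.
\]
Thus the full Euclidean diameter of $\Gamma(t)$ is $O(\sqrt{T-t})$. It remains to show that the transverse coordinates of a single point, say $\Gamma(\theta_0,t)$, converge as $t\uparrow T$. For this we use the evolution equation. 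In the fixed-$x$ graph gauge, and more generally, each coordinate satisfies $\partial_t z=z_{ss}$. By the maximum principle applied to this linear parabolic equation (its coefficients are smooth for $t<T$), $\max_\theta z_i(\cdot,t)$ is nonincreasing and $\min_\theta z_i(\cdot,t)$ is nondecreasing in $t$; the same holds for each $w_j$, and also for $x$ and $y$. The oscillation $\max-\min$ is bounded by the diameter estimate above and tends to $0$. So the monotone bounded sequences $\max z_i$ and $\min z_i$ have a common limit $a_i$, and likewise for the $w_j$. Together with $x,y\to0$, this shows that $\Gamma(t)$ converges uniformly to the point
\[
a=(0,0,a_1,\dots,a_n,b_1,\dots,b_k),
\]
which proves the proposition.

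Finally we record the quantitative bound used later: $|\Gamma(\theta,t)-a|_E\le C\sqrt{T-t}$. It follows because $a$ lies between the max and the min of each coordinate at every time, so each coordinate of $\Gamma(\theta,t)-a$ is bounded by the oscillation, which is $O(\sqrt{T-t})$. Consequently the normalized curve $[2(T-t)]^{-1/2}(\Gamma-a)$ stays in a fixed Euclidean ball, and its projection stays outside $B_r$.

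The step I expect to be the main obstacle is the uniform validity of \eqref{inclination} up to $T$. It relies on Lemma~\ref{three point monotonicity}, which is available for all $t\ge t_0$, so this is fine. The residual subtlety is the maximum principle for $z_t=z_{ss}$ in the pseudo-Euclidean setting: the operator is $\left(\frac{\partial\theta}{\partial s}\right)^2\partial_\theta^2$ plus a first-order term, and it is uniformly parabolic on each compact interval $[0,t_1]$, $t_1<T$. The standard argument then applies with no bound needed near $T$.
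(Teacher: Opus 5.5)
Your proposal is correct and follows essentially the same route as the paper. The annulus bound controls the $x,y$-ranges, the inclination estimate \eqref{inclination} makes the transverse spans $O(\sqrt{T-t})$, and the maximum principle makes every coordinate range nested, so the ranges shrink to a point. The only cosmetic difference is in the transverse spans: you apply the inclination bound to chords against the projected diameter, whereas the paper uses $|(z_i)_{\hat s}|,|(w_j)_{\hat s}|\le c_1$ and integrates over the projected length, which is at most $2\pi R\sqrt{2(T-t)}$.
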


\begin{proof}
Let
$$
\mathrm{Ran}_{z_i}(t) = \{Z_i(\Gamma(\theta,t))|\theta\in S^1\}
$$
be the $z_i$-range of $\Gamma(t)$ and
$$
l_{z_i}(t) = |\mathrm{Ran}_{z_i}(t)| = \sup_{\theta\in S^1} Z_i(\Gamma(\theta,t)) - \inf_{\theta\in S^1} Z_i(\Gamma(\theta,t))
$$
be the $z_i$-span of $\Gamma(t)$. Similarly define $\mathrm{Ran}_{x}(t)$, $\mathrm{Ran}_{y}(t)$, $\mathrm{Ran}_{w_j}(t)$, $l_{x}(t)$, $l_{y}(t)$, $l_{w_j}(t)$.

The annular bound for $\hat\gamma$ gives
$$
\mathrm{Ran}_x(t),\mathrm{Ran}_y(t)\subset \left[-\sqrt{2(T-t)}R,\sqrt{2(T-t)}R\right].
$$

Using \ref{inclination}, $|(z_i)_{\hat s}|,|(w_j)_{\hat s}|\leq c_1$, so
$$
l_{z_i}(t), l_{w_j}(t) \leq c_1\cdot (\text{the length of $\hat \Gamma(t)$}) \leq  2\pi\sqrt{2(T-t)}Rc_1 \to 0.
$$

Hence each coordinate range (\textbf{which are all monotonically decreasing}) converges to a point, so $\Gamma$ converges to a point.
\end{proof}

Translate the limiting point to the origin.

\begin{proposition}\label{coordinate bound for nf}
Each coordinate function of $\gamma$ is uniformly bounded.
\end{proposition}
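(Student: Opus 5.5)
The $x$ and $y$ coordinates are already controlled, so the plan is to handle them first and then bound the transverse coordinates $z_i,w_j$ by comparing them with the size of the projected curve. The comparison uses the inclination estimate \eqref{inclination} together with the fact that the limiting point lies in the closed convex hull.

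\textbf{Step 1: the planar coordinates.} After translating the limit point to the origin, the $x,y$ coordinates of $\gamma$ are those of $P_{xy}\gamma$. Up to reparametrization this is the curve $\hat\gamma$, and the annular bound from Theorem~\ref{main theorem} gives $|x(\gamma)|,|y(\gamma)|\le R$. The translation only changes the planar part by a fixed vector $p$. We first check that $\hat\gamma$ still lies in $B_R$ after this shift. Since $\Gamma(t)$ shrinks to the point $a$ (Proposition~\ref{coordinate bound for csf}) and the planar coordinate ranges $\mathrm{Ran}_x(t)$, $\mathrm{Ran}_y(t)$ are nested and decreasing, each range contains the corresponding coordinate of $a$. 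Hence
\[
|x(\Gamma(\theta,t))-x(a)|\le l_x(t)\le 2\sqrt{2(T-t)}\,R,
\]
and the same holds for $y$. After normalization this is bounded by $2R$.

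\textbf{Step 2: the transverse coordinates.} We use the same nesting for $z_i$ and $w_j$. Each range $\mathrm{Ran}_{z_i}(t)$ is nested and decreasing; this is stated in the proof of Proposition~\ref{coordinate bound for csf}, and it follows from the maximum principle because each coordinate solves $z_t=z_{ss}$. Therefore $Z_i(a)\in\mathrm{Ran}_{z_i}(t)$ for all $t$. This gives
\[
|Z_i(\Gamma(\theta,t))-Z_i(a)|\le l_{z_i}(t)\le c_1\,\hat L(t),
\]
where $\hat L(t)$ is the length of $\hat\Gamma(t)$. We then bound $\hat L(t)$. The curve $\hat\Gamma(t)$ is convex and contained in a disk of radius $\sqrt{2(T-t)}\,R$ about the projection of $a$, so $\hat L(t)\le 2\pi\sqrt{2(T-t)}\,R$. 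Dividing by $\sqrt{2(T-t)}$ yields $|z_i(\gamma)|\le 2\pi c_1R$, and the identical argument gives $|w_j(\gamma)|\le 2\pi c_1R$.

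The main obstacle is justifying the monotonicity of the coordinate ranges in the pseudo-Euclidean setting. The equation $\Gamma_t=\Gamma_{ss}$ holds componentwise in the fixed coordinates, with $\partial_s$ the pseudo-Euclidean arclength, so each coordinate function $f$ satisfies
\[
f_t=\Big(\frac{\partial\theta}{\partial s}\Big)^2 f_{\theta\theta}+\frac{\partial^2\theta}{\partial s^2}\,f_\theta .
\]
This is a scalar linear parabolic equation with positive leading coefficient, since the curve is spacelike. The weak maximum principle then shows that $\max f$ is nonincreasing and $\min f$ is nondecreasing. If needed, the second bound can be double-checked through \eqref{inclination} applied to chords, which gives $|\Delta z|\le c_1|\Delta(x,y)|_E$ for any chord. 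Combined with Step 1 this yields $|z_i(\gamma)|\le c_1\cdot 4R$ directly, without the length estimate.
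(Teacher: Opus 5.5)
Your proof is correct and is essentially the paper's argument: after translating the limit point to the origin, $0$ lies in each nested range $\mathrm{Ran}_{z_i}(t)$. The span bound $l_{z_i}(t)\le c_1\hat L(t)\le 2\pi\sqrt{2(T-t)}\,Rc_1$ then gives $|z_i(\gamma)|\le 2\pi c_1R$ after normalization, and likewise for $w_j$. Your maximum-principle proof that the coordinate ranges are nested, and your explicit check of the planar coordinates, supply details the paper states without proof.
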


\begin{proof}
We only prove for $z_i$.

By $\Gamma(t)\to0$, $0\in \mathrm{Ran}_{z_i}(t) $. For $\Gamma(t)$, we have $l_{z_i}(t)\leq 2\pi\sqrt{2(T-t)}Rc_1$, hence for $\gamma({\sigma})$, $z_i\in [-2\pi Rc_1,2\pi Rc_1]$.
\end{proof}

Similar to the previous process, each component of $\gamma\cap \Omega(r/2)$ can be parameterized by $x$ and viewed as a graph flow. More specifically,
$$
y_{\sigma} = y-xy_x+\frac{1}{1+y_x^2+\sum (z_i)_x^2-\sum (w_j)_x^2}y_{xx};
$$

Using the same method as above, $y_x$, $(z_i)_x$, $(w_j)_x$ are all uniformly bounded.

Theorem \ref{Nash and Schauder estimate} yields the following corollary:

\begin{corollary}\label{coordinate derivative bound}
For every integer $k\geq1$ and every $\sigma_*>0$, all $k$-th spatial derivatives of $y,z_i,w_j$, as well as their time derivatives, are uniformly bounded for $\sigma\geq\sigma_*$ on smaller graph charts. Bounds on a compact earlier interval depend additionally on the smooth initial data.
\end{corollary}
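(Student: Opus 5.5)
The plan is to regard each component of $\gamma(\sigma)\cap\Omega(r/2)$ as a graph over $x$ and to write a closed quasilinear parabolic system for the dependent coordinates $y,z_i,w_j$. Everything here is uniform in $\sigma$. The annular bound $\hat\gamma\subset B_R\setminus B_r$ supplies the slice geometry. Proposition~\ref{coordinate bound for nf} bounds the $z_i,w_j$. We have already seen that $y_x,(z_i)_x,(w_j)_x$ are bounded, and that the coefficient
\[
\alpha:=\bigl(1+y_x^2+\textstyle\sum (z_i)_x^2-\sum (w_j)_x^2\bigr)^{-1}
\]
lies in $\bigl[(1+nc_1^2)^{-1}c_2^{-2}\ldots\bigr]$. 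More precisely, from the displayed identity, $\alpha(1+y_x^2)\in[(1+nc_1^2)^{-1},c_2^2]$ with $1+y_x^2$ bounded, so $\alpha$ is pinched between two positive constants. Each component $u\in\{y,z_i,w_j\}$ satisfies
\[
u_\sigma=\alpha\,u_{xx}-xu_x+u .
\]
This is the same fixed-$x$ gauge computation as before; the normalizing term $\gamma$ contributes $u-xu_x$ once the tangential correction keeping the first coordinate equal to $x$ is added.

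The first step is to treat this as a linear equation $u_\sigma=\alpha u_{xx}+b u_x+u$ with merely bounded measurable coefficients. The coefficient $b=-x$ is smooth and bounded on $|x|<r/2$. By the Nash/Krylov--Safonov part of Theorem~\ref{Nash and Schauder estimate} on parabolic cylinders of fixed size $(r/4)^2$ in time, this gives a uniform H\"older bound for $u$.

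The harder point is upgrading to H\"older continuity of the gradient, because $\alpha$ depends on the first derivatives. For this I would differentiate in $x$. Then $v=u_x$ satisfies a divergence-type equation
\[
v_\sigma=(\alpha v_x)_x - x v_x ,
\]
since $(\alpha u_{xx})_x=(\alpha v_x)_x$ and $(-xu_x+u)_x=-xv_x$. De Giorgi--Nash applies to this equation and yields $C^{\beta}$ bounds for all first derivatives. Hence $\alpha\in C^{\beta}$ uniformly, and Schauder estimates give $C^{2+\beta}$ bounds on a smaller slice, for instance $\Omega(r/8)$. Differentiating the system repeatedly and bootstrapping with Schauder then bounds every spatial derivative on successively smaller charts. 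The time derivatives are recovered from the equation itself.

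Uniformity for $\sigma\ge\sigma_*$ comes from working on cylinders $[\sigma-1,\sigma]\times\Omega$ with $\sigma\ge\sigma_*$. The estimates are translation invariant in time, because all coefficient bounds hold for $\sigma\ge0$; only the cylinder length $\min(1,\sigma_*)$ enters the constants. On a compact earlier interval, near $\sigma=0$, one instead uses smooth dependence on the initial data through the standard short-time estimates.

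Finally, to cover the whole curve, I would rotate the graph direction within the $xy$-plane. The annular bound guarantees that finitely many rotated slices of width comparable to $r$ cover the curve, with the same constants in each.

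The main obstacle I expect is the gradient-H\"older step. The system is only weakly coupled through $\alpha$, and the minus signs from the $w_j$ might threaten ellipticity. The pinching of $\alpha$ established above removes that threat, so the differentiated divergence-form equation should carry the argument.
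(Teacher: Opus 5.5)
Your argument is correct and follows the paper's route: the corollary comes from applying Theorem~\ref{Nash and Schauder estimate} with $\delta=1$ to the graph system, and that proof likewise differentiates once to get the divergence-form equation $V_\sigma=(\alpha V_x)_x-xV_x$ for $V=U_x$, uses De Giorgi--Nash--Moser to make $\alpha$ H\"older, and then bootstraps with Schauder on backward cylinders of $\sigma$-independent size, reading time derivatives off the equations. Your preliminary Krylov--Safonov H\"older bound for $u$ itself is unnecessary and is not part of that theorem, since the bounds on $U$ and $U_x$ already suffice as input.
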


The projected domains contain $B_r$ and lie in $B_R$. We may parametrize their boundaries by polar angle about the origin, and lift this parametrization to $\gamma$. Arzela--Ascoli and a diagonal argument therefore give a smooth immersed subsequential limit for every sequence $\sigma_i\to\infty$. In the following arguments we may start at any fixed positive normalized time, when the projected curvature is strictly positive.

\section{Proof of Theorem \ref{main theorem 2}, part \romannumeral2: Planar limit curve}\label{sec:planar-convergence}

In this section, we will show that the subsequential smooth limit of $\gamma({\sigma})$ must be a planar curve. We focus on the normalized flow, its projection, and its copy to Euclidean spaces. %They share the same time parameter, which will be denoted by $t$ instead of ${\sigma}$ in the following sections.

Let $\gamma(\infty)$ be any smooth limit of a subsequence $\gamma({\sigma}_i)$.\\

\Needspace{5\baselineskip}
\noindent\textbf{Step 4: Prove the curvature of the limit curve never vanishes.}

First we prove that the curvature vector of $\gamma(\infty)$ never vanishes.

Let $\hat \gamma_0 = P_{xy}\circ \gamma$.

To show that the curvature vector of $\gamma(\infty)$ never vanishes, we prove a uniform positive lower bound for the curvature of $\hat\gamma_0$. We use its logarithmic curvature integral, defined precisely below. Both the curvature and the measure in this integral belong to the planar projection, so we first fix its parametrization and compute its metric evolution.

\textbf{We fix a reparameterization} $(\hat \theta,{\sigma})$ such that $\hat \gamma(\hat \theta,{\sigma}) = \hat \gamma_0(\theta(\hat \theta,{\sigma}),{\sigma})$ satisfies
$$
\hat \gamma_{\sigma} = a(\hat\theta,{\sigma})\hat \gamma_{\hat s \hat s} + \hat \gamma
$$
where $\hat s$ denotes the arc-length parameter of $\hat \gamma$ and $a(\hat\theta,{\sigma}) = \left(\frac{\partial \hat s}{\partial s}\right)^2$, evaluated in this reparametrization.

Let $\hat k(\hat \theta,{\sigma})$ be the curvature of $\hat \gamma({\sigma})$ at $\hat \gamma(\hat \theta,{\sigma})$.

We calculate
\begin{align*}
&{\hat k}_{\sigma} = (a{\hat k})_{\hat s\hat s} + a{\hat k}^3 -{\hat k}
\end{align*}

So
\begin{align*}
({\ln\hat k})_{\sigma} &= \hat k^{-1}( (a{\hat k})_{\hat s\hat s} + a{\hat k}^3 -{\hat k}) \\
&=\left(\frac{({a\hat k})_{\hat s}}{\hat k}\right)_{\hat s} + a\left(\frac{\hat k_{\hat s}}{\hat k}\right)^2+ a_{\hat s}\left(\frac{\hat k_{\hat s}}{\hat k}\right)+a\hat k^2 -1\\
&=\left(\frac{({a\hat k})_{\hat s}}{\hat k}\right)_{\hat s} + a({\ln\hat k})_{\hat s}^2+ a_{\hat s}({\ln\hat k})_{\hat s}+a\hat k^2 -1
\end{align*}

Also,
\begin{align*}
({\ln\hat k})_{\sigma} &= \hat k^{-1}( a{\hat k}_{\hat s\hat s} + 2a_{\hat s}{\hat k}_{\hat s} + a_{\hat s\hat s}{\hat k} + a{\hat k}^3 -{\hat k}) \\
&=a\left(\frac{\hat k_{\hat s}}{\hat k}\right)_{\hat s} + a\left(\frac{\hat k_{\hat s}}{\hat k}\right)^2+ 2a_{\hat s}\left(\frac{\hat k_{\hat s}}{\hat k}\right) + a_{\hat s \hat s } + a\hat k^2 -1\\
&=a({\ln\hat k})_{\hat s \hat s} + a({\ln\hat k})_{\hat s}^2+ 2a_{\hat s}({\ln\hat k})_{\hat s} + a_{\hat s \hat s } + a\hat k^2 -1
\end{align*}

\begin{lemma}\label{s and hats derivative bound}
For any $m\in \mathbb N_{>0}$, $\frac{\partial^m s}{\partial \hat s^m}$ is uniformly bounded.
\end{lemma}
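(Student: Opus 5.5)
We need to bound $\partial^m s/\partial\hat s^m$ uniformly. The plan is to work in the graph charts of Section~\ref{sec:tangent-estimates}, Step 3, where Corollary~\ref{coordinate derivative bound} already controls all derivatives, and to convert that control into bounds on derivatives of $s$ with respect to $\hat s$.

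On each chart (after rotating within the $xy$-plane), a component of $\gamma(\sigma)$ over $\{|x|<r/16\}$ is written as $(x,y,z_i,w_j)$. Both arclengths are explicit functions of $x$:
\[
\frac{ds}{dx}=\Big(1+y_x^2+\sum (z_i)_x^2-\sum (w_j)_x^2\Big)^{1/2},\qquad \frac{d\hat s}{dx}=\big(1+y_x^2\big)^{1/2}.
\]
By Corollary~\ref{coordinate derivative bound}, every $x$-derivative of $y,z_i,w_j$ is uniformly bounded for $\sigma\geq\sigma_*$. The radicands have uniform positive lower bounds. For $d\hat s/dx$ this is trivial. For $ds/dx$ it follows from the bound $(d\hat s/ds)^2\le c_2^2$ obtained earlier, which holds under the standing assumption that alternative (1) is excluded. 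Since the square root is smooth on $[c,\infty)$ for $c>0$, all $x$-derivatives of $ds/dx$ and $d\hat s/dx$ are therefore uniformly bounded. Moreover, $d\hat s/dx\geq 1$, so $dx/d\hat s=(1+y_x^2)^{-1/2}\in(0,1]$, and its $x$-derivatives are bounded as well.

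Next we write $\partial/\partial\hat s=(dx/d\hat s)\,\partial/\partial x$ and iterate. Then
\[
\frac{\partial s}{\partial\hat s}=\frac{ds/dx}{d\hat s/dx},
\]
and by induction $\partial^m s/\partial\hat s^m$ is a polynomial in the quantities $dx/d\hat s$ and $\partial_x^j(ds/dx)$, $\partial_x^j(d\hat s/dx)$, $\partial_x^j(dx/d\hat s)$ for $j\le m-1$. Each of these is bounded by the previous paragraph, so the derivative is bounded on each chart. One point should be noted here: $s$ is the pseudo-Euclidean arclength of $\gamma$, but $\hat s$ is unaffected by the tangential reparametrization fixed above. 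Hence the derivative $\partial^m s/\partial\hat s^m$ is geometric, computed along the curve at fixed $\sigma$, and it can be evaluated in the $x$-gauge.

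Finally, the curve is covered by finitely many such charts, using rotations in the $xy$-plane and the annulus bound $B_R\setminus B_r$ exactly as in Step 3, so the bound is uniform along the whole curve for $\sigma\geq\sigma_*$. On the compact interval $[0,\sigma_*]$ the bound follows from smoothness, since the projection is regular by Theorem~\ref{convexity keeping}.

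The main obstacle is ensuring that the chart sizes and the constants in Corollary~\ref{coordinate derivative bound} are genuinely uniform in $\sigma$. This rests on the uniform annulus bound and on the lower bound for the graph metric, both of which were established in Step 3, so I expect this step to go through without new ideas.
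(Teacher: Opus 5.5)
Your proposal is correct and follows essentially the same route as the paper: local graph charts over $x$, the explicit formulas for $ds/dx$ and $d\hat s/dx$ with lower bounds from $d\hat s/dx\geq 1$ and the uniform tangent bound, and an induction using $\partial_{\hat s}=(d\hat s/dx)^{-1}\partial_x$ together with Corollary~\ref{coordinate derivative bound}. The only overreach is your aside about $[0,\sigma_*]$: Theorem~\ref{convexity keeping} guarantees a regular projection only at positive times, but this is harmless because the lemma is only needed on a positive-time tail.
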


\begin{proof}
Fix $t$. Without loss of generality, locally we parameterize $\gamma$ by $x$, write $x = x(\theta)$ and
$$
\gamma(\theta,t) = (x,p_0(x)=y(x),p_1(x),...,p_{n+k}(x)),\,\hat \gamma(\theta,t) = (x,y(x)).
$$

By Corollary \ref{coordinate derivative bound}, each $\frac{\partial^k p_i}{\partial x^k}$ are uniformly bounded.

Then
$$
\frac{\partial s}{\partial x} = \sqrt{1+\sum\alpha_i(p_i)_x^2}, \, \frac{\partial \hat s}{\partial x} = \sqrt{1+y_x^2},
$$
where $\alpha_i = \pm 1$.

$\gamma_s$ is uniformly bounded so $\frac{\partial x}{\partial s}$ is uniformly bounded. Moreover, due to the uniform bounds of $(p_i)_x$, $\frac{\partial s}{\partial x}, \frac{\partial \hat s}{\partial x}$ are also uniformly bounded. We also notice that $\frac{\partial \hat s}{\partial x} \geq 1$.

We have
$$
\frac{\partial^{m+1} s}{\partial \hat s^{m+1}} = \frac{\partial\left(\frac{\partial^m s}{\partial \hat s^m}\right)}{\partial x}\left(\frac{\partial\hat s}{\partial x}\right)^{-1}
$$
and
$$
\frac{\partial\left(\frac{\partial s}{\partial x}\right)}{\partial x} =\big (\sqrt{1+\sum\alpha_i(p_i)_x^2}\,\big)_x = \frac{(1+\sum\alpha_i(p_i)_x^2)_x}{2\frac{\partial s}{\partial x}}
$$
$$
\frac{\partial\left(\frac{\partial \hat s}{\partial x}\right)}{\partial x} =\big (\sqrt{1+y_x^2}\,\big)_x = \frac{(1+y_x^2)_x}{2\frac{\partial \hat s}{\partial x}}
$$

By induction, $\frac{\partial^m s}{\partial \hat s^m}$ can be expanded as a finite sum of
$$
\frac{\text{polynomials in derivatives of $p_i$ with respect to $x$}}{\left(\frac{\partial s}{\partial x}\right)^a\left(\frac{\partial \hat s}{\partial x}\right)^b},
$$
hence $\frac{\partial^m s}{\partial \hat s^m}$ has uniform bounds for each $m$ by Corollary~\ref{coordinate derivative bound}.

\end{proof}

Correspondingly, $\frac{\partial^m \hat s}{\partial s^m}$ is also uniformly bounded for each $m$. Moreover, using the same method as above, we can show that $\left(\left(\frac{\partial \hat s}{\partial  s}\right)^2\right)_{\hat s \hat s}$ and $\left(\left(\frac{\partial \hat s}{\partial  s}\right)^2\right)_{ \hat s}$ are also uniformly bounded.

By Corollary \ref{coordinate derivative bound}, $\hat k = \frac{\sqrt{(1+\left|y_x\right|^2)\left|y_{x x}\right|^2-\left(y_x \cdot y_{x x}\right)^2}}{\left(1+\left|y_x\right|^2\right)^{\frac{3}{2}}}$ is also uniformly bounded.

For this step, write $S^1$ for the fixed parameter circle of $\hat\gamma$ and set
\begin{equation}\label{log-curvature-definitions}
\begin{aligned}
\hat g(\hat\theta,\sigma)&=|\hat\gamma_{\hat\theta}(\hat\theta,\sigma)|_E,
&d\hat s&=\hat g(\hat\theta,\sigma)\,d\hat\theta,\\
L(\sigma)&=\int_{S^1}\hat g(\hat\theta,\sigma)\,d\hat\theta
=\operatorname{Length}(\hat\gamma(\sigma)),\\
u(\hat\theta,\sigma)&=\log\hat k(\hat\theta,\sigma),\\
f(\sigma)&=\min_{\hat\theta\in S^1}u(\hat\theta,\sigma)
=\log\min_{\hat\theta\in S^1}\hat k(\hat\theta,\sigma),\\
M(\sigma)&=\max_{\hat\theta\in S^1}u(\hat\theta,\sigma)
=\log\max_{\hat\theta\in S^1}\hat k(\hat\theta,\sigma),\\
F(\sigma)&=\int_{\hat\gamma(\sigma)}u\,d\hat s
=\int_{S^1}u(\hat\theta,\sigma)\hat g(\hat\theta,\sigma)\,d\hat\theta.
\end{aligned}
\end{equation}
Thus $f,M,F$ are real-valued functions of normalized time, not fixed constants. The extrema exist because $S^1$ is compact and $\hat k>0$ at positive times. The integral defining $F$ uses projected arclength $d\hat s$, not pseudo-Euclidean arclength $ds$. In this step $L=\hat L$, the projected length. This $F$ is unrelated to the Gage deficit $\mathcal F_G$ in Section~8.

\begin{theorem}\label{curvature lower bound}
For each $\sigma_*>0$, the projected curvature $\hat k$ has a uniform positive lower bound on $\sigma\geq\sigma_*$.
\end{theorem}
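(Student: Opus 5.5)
The plan is a compactness and contradiction argument rather than a direct ODE estimate for $f(\sigma)$. The pointwise inequality for $f$ alone is not enough. From the second form of the evolution of $\ln\hat k$, at a spatial minimum we have $u_{\hat s}=0$ and $u_{\hat s\hat s}\geq0$. With the uniform bounds on $a$, $a_{\hat s}$, $a_{\hat s\hat s}$ (Lemma~\ref{s and hats derivative bound} and the remark after it), this gives
\[
f'(\sigma)\geq a\hat k^2-1+a_{\hat s\hat s}\geq -C
\]
in the viscosity/Dini sense. That bounds $f$ from below only on bounded time intervals; it cannot give a bound uniform as $\sigma\to\infty$. I will therefore use it only for finite-time control and get uniformity from the smooth subsequential compactness established after Corollary~\ref{coordinate derivative bound}.

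Fix $\sigma_*>0$ and suppose, for contradiction, that there are $\sigma_i\geq\sigma_*$ with $\min_{S^1}\hat k(\cdot,\sigma_i)\to0$.
\begin{enumerate}
\item On $[\sigma_*,\sigma_*+2]$, the finite-time inequality above and $\hat k>0$ at $\sigma_*/2$ (Theorem~\ref{convexity keeping}) already give a positive lower bound. So we may assume $\sigma_i\to\infty$.
\item Consider the time-translated flows $\gamma_i(\cdot,\tau)=\gamma(\cdot,\sigma_i+\tau)$ for $\tau\in[-1,0]$, with the projections parametrized by polar angle about the origin as in Section~\ref{sec:tangent-estimates}.
\item By Corollary~\ref{coordinate derivative bound}, all space and time derivatives are uniformly bounded on these windows. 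Arzel\`a--Ascoli then gives a subsequence converging smoothly on $S^1\times[-1,0]$ to a limit flow $\gamma_\infty$. The projection $\hat\gamma_\infty$ stays in $\overline{B_R}\setminus B_r$, is convex, and encloses $B_r$.
\item By Lemma~\ref{s and hats derivative bound}, the weights $a_i=(\partial\hat s/\partial s)^2$ converge smoothly to a limit $a_\infty$. The limit satisfies $\frac{1}{1+nc_1^2}\leq a_\infty\leq c_2^2$, using \eqref{inclination} and the tangent bound.
\item Hence $\hat k_\infty\geq0$ solves the linear equation
\[
\partial_\tau\hat k_\infty=(a_\infty\hat k_\infty)_{\hat s\hat s}+a_\infty\hat k_\infty^3-\hat k_\infty .
\]
After freezing the factor $\hat k_\infty^2$ in the cubic term as a bounded coefficient, this is uniformly parabolic with smooth bounded coefficients in $\hat k_\infty$.
\end{enumerate}

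The limit closes the argument. Each $\hat\gamma_\infty(\tau)$ is a closed curve of total turning $2\pi$, so $\hat k_\infty(\cdot,\tau)\not\equiv0$. The strong maximum principle, applied on $[-1,0]$, then gives $\hat k_\infty(\cdot,0)>0$ everywhere. On the other hand, smooth convergence gives
\[
\min\hat k_\infty(\cdot,0)=\lim_i\min\hat k(\cdot,\sigma_i)=0,
\]
which is a contradiction. The reparametrization used in the fixed gauge $\hat\theta$ differs from the polar-angle gauge only by tangential diffeomorphisms. Since $\hat k$ is geometric, its minimum is gauge independent.

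The main obstacle is making the passage to the limit rigorous for the weight and the curvature equation. Concretely, one must check three things:
\begin{itemize}
\item $a_i$, $(a_i)_{\hat s}$ and $(a_i)_{\hat s\hat s}$ converge in the chosen gauge, and the tangential terms coming from the change of parametrization do not spoil the equation;
\item the limit weight keeps the uniform positive lower bound, so that the maximum principle applies;
\item $\hat k$ is computed from the projected graphs, whose second derivatives converge uniformly.
\end{itemize}
If the gauge issues become troublesome, my fallback is to work in the local graph charts $\Omega(r/16)$ rotated within the $xy$-plane. There $\hat k$ is an explicit smooth function of $y_x,y_{xx}$, which satisfy uniformly parabolic equations with converging coefficients. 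I would then apply the strong maximum principle chartwise, using the positive lower bound on $\hat k_\infty$ in one chart to propagate positivity. The functionals $F$ and $M$ from \eqref{log-curvature-definitions} would be used only as a quantitative cross-check: $M$ is uniformly bounded above, and $F$ is bounded above by $M\cdot L\leq C$.
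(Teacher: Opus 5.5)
Your proof is correct, but it follows a different route from the paper's.

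\emph{The paper's route.} The paper argues quantitatively through the log-curvature integral $F=\int\log\hat k\,d\hat s$. Its evolution contains the good term $\frac{a_0}{2}\int u_{\hat s}^2\,d\hat s$, which through an oscillation estimate controls $f^2$ and gives $F'\geq c_1f^2+F-C_2$. A barrier argument then bounds $F$. Integrating over unit intervals produces, in every $[\sigma-1,\sigma]$, a time at which $f$ is bounded below. The pointwise inequality $f'\geq -A_1-1$ that you also derived then propagates this bound across one unit of time.

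\emph{Your route.} You replace the integral estimate with compactness. You take limit flows on unit windows; their nonnegative curvature solves a uniformly parabolic linear equation and has total turning $2\pi$. The strong maximum principle, which applies up to the top time slice, then forbids a zero at $\tau=0$. The technical points you flag are harmless. The polar-gauge tangential velocity has bounded derivatives of all orders, because the projections stay in a fixed annulus with uniform $C^m$ bounds. So it only adds a bounded drift $\xi\hat k_{\hat s}$, which does not affect the maximum principle.

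\emph{What each buys.} Your argument is shorter and uses only the linear structure of the curvature equation. The cost is that it is non-quantitative. It also needs the uniform mixed space--time derivative bounds of Corollary~\ref{coordinate derivative bound} to extract limits of $\hat\gamma$, $a$, $a_{\hat s}$, $a_{\hat s\hat s}$ and of the tangential velocity. The paper's argument uses only spatial bounds on $L,\hat k,a,a_{\hat s},a_{\hat s\hat s}$ at each time. Its lower bound depends explicitly on these constants and on the initial slice.

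If you want a quantitative version of your approach, apply the parabolic Harnack inequality to $\hat k$ on windows of unit length. Together with $\max\hat k\geq 2\pi/L_1$, this gives an explicit lower bound without passing to limits.
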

\begin{proof}
Fix $\sigma_*>0$ and replace $\sigma$ by $\sigma_*+\sigma$, retaining the notation in \eqref{log-curvature-definitions}. In particular, the new initial curve has strictly positive curvature. The preceding estimates give time-independent constants such that
\[
0<L_0\leq L(\sigma)\leq L_1,\qquad
0<\hat k\leq K_1,\qquad a_0\leq a\leq a_1,\qquad
|a_{\hat s}|+|a_{\hat s\hat s}|\leq A_1,
\]
where $a_0>0$. Constants below may depend on these bounds and on this initial time slice, but not on $\sigma$.

\smallskip
\noindent\emph{1. Preliminary bounds and the evolution of $F$.}
Convexity and the turning-number identity give
\[
2\pi=\int_{\hat\gamma(\sigma)}\hat k\,d\hat s
\leq L(\sigma)\max\hat k\leq L_1e^{M(\sigma)}.
\]
Consequently,
\[
\log(2\pi/L_1)\leq M(\sigma)\leq\log K_1,\qquad
L(\sigma)f(\sigma)\leq F(\sigma)\leq L(\sigma)M(\sigma).
\]
Choose $C_0>0$ such that $|M(\sigma)|\leq C_0$ and $F(\sigma)\leq C_0$ for all $\sigma$.

The metric formula in Appendix~B, applied to the planar flow with no additional tangential term, is
\[
\hat g_\sigma=(1-a\hat k^2)\hat g.
\]
For any smooth function $\psi$ on the fixed parameter circle this yields
\[
\frac{d}{d\sigma}\int_{\hat\gamma(\sigma)}\psi\,d\hat s
=\int_{S^1}(\psi_\sigma\hat g+\psi\hat g_\sigma)\,d\hat\theta
=\int_{\hat\gamma(\sigma)}
\bigl(\psi_\sigma+(1-a\hat k^2)\psi\bigr)\,d\hat s.
\]
The logarithmic curvature equation computed above reads
\begin{equation}\label{evolution of ln hat k}
u_\sigma=\left(\frac{(a\hat k)_{\hat s}}{\hat k}\right)_{\hat s}
+a u_{\hat s}^2+a_{\hat s}u_{\hat s}+a\hat k^2-1.
\end{equation}
Substitute $\psi=u$ and integrate the total derivative on the closed curve:
\begin{align*}
F'
&=\int_{\hat\gamma(\sigma)}
\left[
\left(\frac{(a\hat k)_{\hat s}}{\hat k}\right)_{\hat s}
+a u_{\hat s}^2+a_{\hat s}u_{\hat s}
+a\hat k^2-1+(1-a\hat k^2)u
\right]\,d\hat s\\
&=\int_{\hat\gamma(\sigma)}
\left(a u_{\hat s}^2+a_{\hat s}u_{\hat s}
+a\hat k^2-1-a\hat k^2u\right)\,d\hat s+F.
\end{align*}
Young's inequality gives
\[
a u_{\hat s}^2+a_{\hat s}u_{\hat s}
\geq\frac{a}{2}u_{\hat s}^2-\frac{a_{\hat s}^2}{2a}
\geq\frac{a_0}{2}u_{\hat s}^2-\frac{A_1^2}{2a_0}.
\]
Also, $x^2\log x$ is bounded above on $0<x\leq K_1$, since it tends to zero as $x\downarrow0$. Hence $a\hat k^2u\leq C$, although no lower bound for $\hat k$ has yet been proved. Dropping $a\hat k^2\geq0$ and using $L\leq L_1$, we obtain
\begin{equation}\label{entropy-gradient-estimate}
F'\geq\frac{a_0}{2}\int_{\hat\gamma(\sigma)}u_{\hat s}^2\,d\hat s+F-C_1.
\end{equation}

\smallskip
\noindent\emph{2. Oscillation control and a uniform lower bound for $F$.}
The two arcs joining a minimum point of $u$ to a maximum point each contribute at least $M-f$ to its total variation. Thus
\[
\int_{\hat\gamma(\sigma)}|u_{\hat s}|\,d\hat s\geq2(M-f).
\]
Cauchy--Schwarz, followed by $f^2\leq2(M-f)^2+2M^2$, implies
\begin{align*}
\int_{\hat\gamma(\sigma)}u_{\hat s}^2\,d\hat s
&\geq\frac1{L(\sigma)}
\left(\int_{\hat\gamma(\sigma)}|u_{\hat s}|\,d\hat s\right)^2\\
&\geq\frac4{L_1}(M-f)^2
\geq\frac2{L_1}f^2-\frac4{L_1}C_0^2.
\end{align*}
Together with \eqref{entropy-gradient-estimate}, this gives constants $c_1,C_2>0$ such that
\begin{equation}\label{entropy-minimum-estimate}
F'\geq c_1 f^2+F-C_2.
\end{equation}
If $F\leq0$, then $f\leq F/L\leq0$ and therefore
\[
f^2\geq\frac{F^2}{L^2}\geq\frac{F^2}{L_1^2}.
\]
In particular,
\[
F'\geq c_2F^2+F-C_2\quad\hbox{whenever }F\leq0,
\qquad c_2=c_1/L_1^2>0.
\]
Choose $B>0$ so large that
\[
F(0)>-B,\qquad c_2B^2-B-C_2>0.
\]
At a first downward crossing of the level $-B$, differentiability of $F$ would give $F'\leq0$, whereas the preceding inequality gives $F'>0$. Such a crossing is impossible. Enlarging a constant if necessary, we have proved
\begin{equation}\label{Bounds of F}
-C_F\leq F(\sigma)\leq C_F\qquad(\sigma\geq0).
\end{equation}
Returning to the \emph{unconditional} inequality \eqref{entropy-minimum-estimate}, rather than its version restricted to $F\leq0$, we conclude
\[
F'\geq c_1f^2-C_3.
\]
For every $\sigma\geq1$, integration over the preceding unit interval gives
\[
c_1\int_{\sigma-1}^{\sigma}f(\xi)^2\,d\xi
\leq F(\sigma)-F(\sigma-1)+C_3
\leq2C_F+C_3.
\]
Set $C_4=(2C_F+C_3)/c_1$. There is a time
$\sigma'\in[\sigma-1,\sigma]$ such that $f(\sigma')^2\leq C_4$, and hence
$f(\sigma')\geq-\sqrt{C_4}$.

\smallskip
\noindent\emph{3. From the integral estimate to a pointwise curvature bound.}
The nondivergence form of the same equation is
\[
u_\sigma=a u_{\hat s\hat s}+a u_{\hat s}^2
+2a_{\hat s}u_{\hat s}+a_{\hat s\hat s}+a\hat k^2-1.
\]
At a spatial minimum of $u$, one has $u_{\hat s}=0$ and
$u_{\hat s\hat s}\geq0$, so $u_\sigma\geq-A_1-1$ there.
Equivalently, the maximum principle applied to $u+(A_1+1)\sigma$ shows
\[
f(\sigma_2)\geq f(\sigma_1)-(A_1+1)(\sigma_2-\sigma_1)
\qquad(0\leq\sigma_1\leq\sigma_2).
\]

Using the time $\sigma'$ found above,
\[
f(\sigma)\geq f(\sigma')-(A_1+1)(\sigma-\sigma')
\geq-\sqrt{C_4}-A_1-1\qquad(\sigma\geq1).
\]
On $[0,1]$, smoothness and strict positivity give the finite lower bound
$\min_{S^1\times[0,1]}u$. Thus $f\geq-C_*$ for all shifted times, and
\[
\hat k(\hat\theta,\sigma)=e^{u(\hat\theta,\sigma)}
\geq e^{f(\sigma)}\geq e^{-C_*}>0.
\]
Restoring the original time variable proves the theorem.
\end{proof}

\begin{corollary}\label{limit curvature never vanishes}
The curvature vector of every limiting curve is nonzero. The Euclidean curvature of its coordinate copy is strictly positive.
\end{corollary}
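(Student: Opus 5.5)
The plan is to pass the uniform lower bound of Theorem~\ref{curvature lower bound} to the limit. Let $\gamma(\infty)$ be the smooth limit of $\gamma(\sigma_i)$ along the polar-angle parametrizations described after Corollary~\ref{coordinate derivative bound}. The convergence is smooth in the Euclidean topology, and the projected curves $\hat\gamma_0(\sigma_i)=P_{xy}\circ\gamma(\sigma_i)$ lie in $B_R\setminus B_r$. Their graph representations have slopes bounded uniformly, as in Step~3. Hence $P_{xy}\circ\gamma(\infty)$ is a regular closed curve, and its planar curvature is the $C^2$ limit of $\hat k(\cdot,\sigma_i)$. This is because $\hat k$ is a continuous expression, given by the displayed formula, in $y_x,y_{xx}$, and those converge uniformly on each chart. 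By Theorem~\ref{curvature lower bound} with a fixed $\sigma_*>0$, we have $\hat k\ge c>0$ for $\sigma\ge\sigma_*$. Therefore the projected limit has curvature at least $c$ everywhere, so it is strictly convex.

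Next I transfer this from the projection to the full curve. The projection $P_{xy}$ is linear, so $P_{xy}\circ\gamma(\infty)$ has velocity $P_{xy}\gamma(\infty)'$ and acceleration $P_{xy}\gamma(\infty)''$ in any parameter. Nonvanishing planar curvature says exactly that these two vectors are linearly independent. Consequently $\gamma(\infty)'$ and $\gamma(\infty)''$ are linearly independent in $\mathbb R^{n+2+k}$, because a linear map cannot create independence. The pseudo-Euclidean curvature vector is
\[
\gamma_{ss}=\frac{\langle\gamma_x,\gamma_x\rangle\gamma_{xx}-\langle\gamma_x,\gamma_{xx}\rangle\gamma_x}{\langle\gamma_x,\gamma_x\rangle^2}.
\]
Since the limit is spacelike, with $\langle\gamma_x,\gamma_x\rangle>0$, the coefficient of $\gamma_{xx}$ in this expression is nonzero. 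Independence of $\gamma_x$ and $\gamma_{xx}$ then gives $\gamma_{ss}\neq0$. The spacelike property survives in the limit because $d\hat s/ds\le c_2$ is a uniform bound. This bound gives $\langle\gamma_x,\gamma_x\rangle\ge(1+y_x^2)/c_2^2>0$, uniformly along the sequence.

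For the coordinate copy $Q(\gamma(\infty))$, the map $Q$ is the identity on coordinates. So the same two vectors $Q\gamma'$ and $Q\gamma''$ are linearly independent. The Euclidean curvature equals $|Q\gamma'\wedge Q\gamma''|_E/|Q\gamma'|_E^3$, which is therefore strictly positive at every point. Compactness of $S^1$ then makes it bounded below by a positive constant.

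I do not expect a serious obstacle. The only points needing care are the uniformity of the spacelike bound and the slope bounds along the subsequence, which guarantee that the limit is a regular spacelike immersion. Both follow from Step~3 and Corollary~\ref{coordinate derivative bound}.
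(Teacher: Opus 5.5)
Your proposal is correct and follows essentially the paper's route. The corollary is the limit of the uniform bound $\hat k\ge\hat k_0>0$ from Theorem~\ref{curvature lower bound}, and your observation that a linear projection cannot create linear independence of $\gamma',\gamma''$ is the qualitative form of the paper's graph estimate $\mathcal D\ge|G_{xx}|_E^2\ge|y_{xx}|^2$, which gives $\kappa_E\ge\hat k_0/V_1^3$ in Step~5. Your use of $d\hat s/ds\le c_2$ to keep the limit uniformly spacelike is exactly what the pseudo-Euclidean curvature-vector formula requires.
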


\Needspace{5\baselineskip}
\noindent\textbf{Step 5: The limit is planar.}

\begin{theorem}\label{planar subsequential limit}
Every smooth subsequential limit $\gamma(\infty)$ lies in a spacelike two-dimensional linear subspace.
\end{theorem}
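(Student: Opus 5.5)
The plan is to use the hint in the table of notations: the curvature--torsion density $\mathfrak d=\kappa_E^2\tau_E^2$ of the Euclidean coordinate copy $Q(\gamma)$, together with the total Euclidean curvature $K_E$ and the torsion energy $E$. The argument has three stages: a dissipation identity in which $\int\mathfrak d$ appears with a sign, integrability of that dissipation in time, and passage to the limit. First I would compute the evolution of $Q(\gamma)$ in the fixed Euclidean metric. Under $\gamma_\sigma=\gamma_{ss}+\gamma$ the coordinate copy satisfies $Q(\gamma)_\sigma=\phi\,Q(\gamma)_{\ell\ell}+(\text{tangential})+Q(\gamma)$, where $\ell$ is Euclidean arclength and $\phi=(d\ell/ds)^2$. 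This is exactly the structure of \eqref{WNF} in higher codimension. By Corollary~\ref{coordinate derivative bound}, $\phi$ and all its derivatives are uniformly bounded above and below for $\sigma\geq\sigma_*$. By Corollary~\ref{limit curvature never vanishes} and Theorem~\ref{curvature lower bound}, $\kappa_E$ is bounded below along the whole flow (projection does not increase curvature in the appropriate sense), so the Frenet frame, $\tau_E$ and $\mathfrak d$ are smooth and uniformly bounded.

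Second, I would differentiate $K_E(\sigma)=\int\kappa_E\,d\ell$, which is scale invariant, so the $+Q(\gamma)$ term drops out. For the unweighted space CSF, the classical computation gives $\frac{d}{dt}\int\kappa\,d\ell=-\int\kappa\tau^2\,d\ell$ plus boundary-free terms. For the weighted version I expect
\[
\frac{d K_E}{d\sigma}=-\int\phi\,\kappa_E\tau_E^2\,d\ell+\int \mathcal R\,d\ell ,
\]
where $\mathcal R$ collects terms involving $\phi_\ell,\phi_{\ell\ell}$. I would try to show that $\mathcal R$ integrates to something controlled. One option is to integrate by parts into a form like $\int(\phi)_{\ell\ell}\,\kappa_E$. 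Another is to use that $\phi$ is determined by the non-$xy$ components, whose oscillation should be small when the curve is nearly planar.

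I expect this to be the main obstacle. If the error terms cannot be absorbed, the fallback is the torsion vector $\mathcal B_E$, or equivalently $E=\int\mathfrak d\,d\ell$ directly. I would look for a differential inequality $E'\leq -cE+C\,E^{1/2}(\ldots)$ and combine it with the lower bound $K_E\geq 2\pi$ (Fenchel) and the upper bound coming from the curvature bounds. The goal is $\int_{\sigma_*}^\infty\!\int\mathfrak d\,d\ell\,d\sigma<\infty$.

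Third, given time integrability of $\int\mathfrak d$ together with uniform bounds on $\partial_\sigma\mathfrak d$ (from the higher-derivative bounds in Corollary~\ref{coordinate derivative bound}), the integral $\int\mathfrak d$ tends to $0$ as $\sigma\to\infty$. Along any subsequence converging smoothly to $\gamma(\infty)$, the limit then has $\kappa_E\tau_E\equiv0$. Since $\kappa_E>0$ on the limit, $\tau_E\equiv0$, so $Q(\gamma(\infty))$ lies in an affine $2$-plane. This plane passes through the origin: the limit curve satisfies the stationary equation, so it is a self-shrinker, and the position vector lies in the span of its tangent and curvature vector. Finally, the plane is spacelike. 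Its projection onto the $xy$-plane is injective, because the projection of $\gamma(\infty)$ is a strictly convex closed curve (by the annulus bound and Theorem~\ref{curvature lower bound}) and the inclination bound \eqref{inclination} rules out degeneration. It must also be spacelike rather than merely nondegenerate: it contains spacelike tangent vectors in every direction, since the tangents of a closed convex planar curve sweep all directions.
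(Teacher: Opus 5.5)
Your overall architecture is the paper's: the total Euclidean curvature $K_E$ of the coordinate copy serves as a Lyapunov quantity, you get time-integrability of $E=\int\mathfrak d\,d\ell$, you bound $E'$ so that $E(\sigma)\to0$, and you pass to a smooth limit with $\kappa_E\geq\kappa_0>0$. However, two steps are not actually closed.

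\textbf{Stage 2 is left open, and the obstacle you anticipate does not exist.} For $r_\sigma=\phi r_{\ell\ell}+cr_\ell+r$ one has $\partial_\sigma d\ell=(c_\ell+1-\phi\kappa_E^2)\,d\ell$ and $\partial_\sigma\kappa_E=(\phi\kappa_E)_{\ell\ell}+c(\kappa_E)_\ell+\phi\kappa_E^3-\phi\kappa_E\tau_E^2-\kappa_E$. The $\pm\kappa_E$ and $\pm\phi\kappa_E^3$ terms cancel. Every term involving derivatives of the weight or the tangential coefficient assembles into $(\phi\kappa_E)_{\ell\ell}+(c\kappa_E)_\ell$, which integrates to zero on the closed curve. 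Hence $K_E'=-\int\phi\kappa_E\tau_E^2\,d\ell$ holds exactly; this is Proposition~\ref{total curvature evolution} with weight $\phi$, tangential coefficient $c=w_s$ and $\delta=1$. No smallness of the transverse oscillation, no Fenchel bound, and no differential inequality for $E$ is needed. Since $K_E\geq0$ is nonincreasing and $\phi\kappa_E\tau_E^2\geq(\phi_0/\kappa_1)\mathfrak d$ (with $\phi\geq\phi_0>0$, $\kappa_E\leq\kappa_1$), you get $\int^\infty E\,d\sigma<\infty$ immediately. As written, your fallback $E'\leq -cE+CE^{1/2}(\ldots)$ is only a hope, so without this observation the integrability is unproved.

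\textbf{The argument that the plane passes through the origin is unjustified.} Nothing established at this stage says that a subsequential limit of the normalized flow is stationary. In the Euclidean setting that would come from Huisken's monotonicity formula, which the paper notes is not available with a good sign in the indefinite setting. Round convergence, which would imply it, is exactly what is still being proved.

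Use a maximum-principle argument instead, as the paper does. Suppose the affine plane $p_0+\Pi$ misses $0$, and take a linear functional $\lambda$ with $\lambda|_\Pi=0$ and $\lambda(p_0)>0$. Smooth convergence gives $\lambda(\gamma(\cdot,\sigma_i))\geq h_0>0$ for large $i$. Since $\partial_\sigma\lambda(\gamma)=\partial_s^2\lambda(\gamma)+\lambda(\gamma)$, the maximum principle yields $\lambda(\gamma(\cdot,\sigma_i+\sigma))\geq h_0e^{\sigma}$. This contradicts the uniform coordinate bound of Proposition~\ref{coordinate bound for nf}.

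Your spacelikeness argument matches the paper's: limit tangents are spacelike by the uniform bound on $|\gamma_s|_E$, and a closed planar curve has tangents in every direction of its plane.
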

\begin{proof}
All estimates below are on a fixed positive-time tail of the normalized flow. Return to the inherited parametrization in which
$\gamma_\sigma=\gamma_{ss}+\gamma$; the curvature integrals and the conclusion are unchanged by this choice.

\smallskip
\noindent\emph{1. The Euclidean-copy flow and its total curvature.}
Let $\gamma^*=Q\gamma$, and denote its Euclidean arclength by $\ell$, keeping $s$ for pseudo-Euclidean arclength. Put $w=d\ell/ds$. The chain rule gives
\[
\partial_s=w\partial_\ell,\qquad
\gamma^*_{ss}=w^2\gamma^*_{\ell\ell}+w_s\gamma^*_\ell.
\]
Consequently,
\begin{equation}\label{euclidean-copy-flow}
\gamma^*_\sigma=b\gamma^*_{\ell\ell}+c\gamma^*_\ell+\gamma^*,
\qquad b=w^2,\qquad c=w_s.
\end{equation}
The term $c\gamma^*_\ell$ is tangential, but must be included in local evolution calculations. The uniform tangent bound implies
\[
1\leq w=|\gamma_s|_E\leq C,\qquad 0<b_0\leq b\leq b_1.
\]
On a graph chart $G(x,\sigma)$, put
$g_M=\sqrt{\langle Q^{-1}G_x,Q^{-1}G_x\rangle}$ and $v=|G_x|_E$.
Thus $g_M$ uses the original pseudo-Euclidean metric, whereas $v$ uses the Euclidean metric. Then
\[
w=\frac{v}{g_M},\qquad
b=\frac{v^2}{g_M^2},\qquad
c=\frac1{g_M}\partial_x\left(\frac{v}{g_M}\right).
\]
The graph estimates and the positive lower bound for $g_M$ therefore control all spatial derivatives of $b,c$. They also give
$d\ell/d\hat s=v/\sqrt{1+y_x^2}\leq C$, whence
$L_E\leq C\hat L\leq L_E^*$.

Let $\kappa_E$ be the Euclidean curvature. The graph calculation in part~2 below gives constants
$0<\kappa_0\leq\kappa_E\leq\kappa_1$.
Define the tangent, principal normal, and higher-codimension torsion vector by
\[
T_E=\gamma^*_\ell,\qquad
N_E=\frac{(T_E)_\ell}{\kappa_E},\qquad
\mathcal B_E=(N_E)_\ell+\kappa_E T_E.
\]
They satisfy
\[
(T_E)_\ell=\kappa_E N_E,\qquad
(N_E)_\ell=-\kappa_E T_E+\mathcal B_E,\qquad
\mathcal B_E\perp T_E,N_E.
\]
We write $\tau_E^2=|\mathcal B_E|_E^2$; no unit binormal is needed when $\tau_E=0$ or in higher codimension.

Define $K_E(\sigma)=\int_{\gamma^*(\sigma)}\kappa_E\,d\ell$.
Applying Proposition~\ref{total curvature evolution} in Appendix~B directly to the Euclidean flow \eqref{euclidean-copy-flow}, with normal weight $b$, tangential coefficient $c$, and $\delta=1$, gives
\[
K_E'(\sigma)=-\int_{\gamma^*(\sigma)}b\kappa_E\tau_E^2\,d\ell.
\]
Set
\begin{equation}\label{euclidean-torsion-energy}
\mathfrak d=\kappa_E^2\tau_E^2,\qquad
E(\sigma)=\int_{\gamma^*(\sigma)}\mathfrak d\,d\ell\geq0.
\end{equation}
Since $b\kappa_E\tau_E^2=(b/\kappa_E)\mathfrak d\geq(b_0/\kappa_1)\mathfrak d$,
\[
K_E'\leq-\frac{b_0}{\kappa_1}E.
\]
For a fixed starting time $\sigma_1>0$ and every $S>\sigma_1$, integration yields
\[
\frac{b_0}{\kappa_1}\int_{\sigma_1}^{S}E(\sigma)\,d\sigma
\leq K_E(\sigma_1)-K_E(S)\leq K_E(\sigma_1).
\]
Letting $S\to\infty$, we obtain $\int_{\sigma_1}^\infty E\,d\sigma<\infty$.

\smallskip
\noindent\emph{2. Graph formulas and the uniform regularity of the dissipation.}
On a graph chart write
\[
G(x,\sigma)=\gamma^*(x,\sigma)=(x,y(x,\sigma),\ldots),\qquad v=|G_x|_E,
\]
and set
\[
\mathcal D=v^2|G_{xx}|_E^2-(G_x\cdot G_{xx})^2.
\]
Here $d\ell=v\,dx$, and the graph estimates give $1\leq v\leq V_1$.
Direct differentiation of $T_E=G_x/v$ gives
\begin{align*}
v_x&=\frac{G_x\cdot G_{xx}}{v},\\
H_E:=\kappa_E N_E
&=\frac1v\partial_x\left(\frac{G_x}{v}\right)
=\frac{v^2G_{xx}-(G_x\cdot G_{xx})G_x}{v^4},\\
\kappa_E&=|H_E|_E=\frac{\sqrt{\mathcal D}}{v^3}.
\end{align*}
Since $G_x=e_1+(G_x-e_1)$ and $G_{xx}\perp e_1$,
\begin{align*}
\mathcal D
&=|G_{xx}|_E^2+|G_x-e_1|_E^2|G_{xx}|_E^2
-\bigl((G_x-e_1)\cdot G_{xx}\bigr)^2\\
&\geq |G_{xx}|_E^2\geq |y_{xx}|^2.
\end{align*}
The projected curvature satisfies
$\hat k=|y_{xx}|/(1+y_x^2)^{3/2}$. By Theorem~\ref{curvature lower bound},
$\hat k\geq\hat k_0>0$, so $\mathcal D\geq\hat k_0^2$ and
$\kappa_E\geq\hat k_0/V_1^3>0$.
An upper bound follows from the upper bounds for $G_x,G_{xx}$.

For completeness, the first differentiated curvature formula is
\begin{align*}
\mathcal D_x
&=2\bigl(v^2G_{xx}\cdot G_{xxx}
-(G_x\cdot G_{xx})(G_x\cdot G_{xxx})\bigr),\\
(\kappa_E)_x
&=\frac{\mathcal D_x}{2v^3\sqrt{\mathcal D}}
-\frac{3\sqrt{\mathcal D}\,v_x}{v^4}.
\end{align*}
Further differentiation produces finite sums with polynomial numerators in the graph derivatives and denominators consisting of powers of $v$ and $\sqrt{\mathcal D}$. Both denominators are bounded away from zero. Corollary~\ref{coordinate derivative bound} therefore bounds all these derivatives, including their time derivatives on smaller charts.

Using $\partial_x=v\partial_\ell$ in the Frenet equations, we obtain
\[
(H_E)_x=(\kappa_E)_xN_E-v\kappa_E^2T_E
+v\kappa_E\mathcal B_E.
\]
The three terms are mutually orthogonal, and hence
\[
|(H_E)_x|_E^2=(\kappa_E)_x^2+v^2\kappa_E^4
+v^2\kappa_E^2|\mathcal B_E|_E^2.
\]
In particular the density in \eqref{euclidean-torsion-energy} has the graph expression
\begin{equation}\label{torsion-density-graph}
\mathfrak d=\frac{|(H_E)_x|_E^2-(\kappa_E)_x^2}{v^2}-\kappa_E^4.
\end{equation}
This formula uses the graph speed $v=|G_x|_E$, not the unit speed
$|\gamma^*_\ell|_E=1$. If $D$ denotes either $\partial_x$ or the time derivative at fixed graph coordinate $x$, differentiating \eqref{torsion-density-graph} gives
\begin{align*}
D\mathfrak d
&=\frac{2}{v^2}
\left((H_E)_x\cdot D(H_E)_x-(\kappa_E)_xD(\kappa_E)_x\right)\\
&\quad-\frac{2Dv}{v^3}
\left(|(H_E)_x|_E^2-(\kappa_E)_x^2\right)
-4\kappa_E^3D\kappa_E.
\end{align*}
Every factor is uniformly bounded by the preceding graph estimates. Thus
\[
|\mathfrak d|+|\mathfrak d_x|+\left|\left.\partial_\sigma \mathfrak d\right|_x\right|\leq C,
\]
where $\left.\partial_\sigma\right|_x$ denotes differentiation in time while holding the graph coordinate $x$ fixed.

To pass to the inherited curve parameter $\theta$, write $x=x(\theta,\sigma)$.
Let $P_x$ denote the linear functional taking the $x$-coordinate of a vector.
The original normalized equation gives
$x_\sigma=P_x(\gamma_{ss})+x$. The graph and coordinate bounds imply
$|x_\sigma|\leq C$. The chain rule then gives
\[
\left.\partial_\sigma \mathfrak d\right|_\theta
=\left.\partial_\sigma \mathfrak d\right|_x+x_\sigma \mathfrak d_x \Rightarrow \left|\left.\partial_\sigma \mathfrak d\right|_\theta\right|\leq C.
\]
Using the moving-curve integral formula \eqref{moving-curve-integral} in Appendix~B, we obtain
\begin{align*}
E'(\sigma)
&=\int_{\gamma^*(\sigma)}
\left(\left.\partial_\sigma \mathfrak d\right|_\theta
+(c_\ell+1-b\kappa_E^2)\mathfrak d\right)\,d\ell,\\
|E'(\sigma)|
&\leq L_E^*
\left(\left\|\left.\partial_\sigma \mathfrak d\right|_\theta\right\|_\infty
+\|c_\ell+1-b\kappa_E^2\|_\infty\|\mathfrak d\|_\infty\right)
\leq C_E.
\end{align*}

\smallskip
\noindent\emph{3. Vanishing torsion and planarity of every subsequential limit.}
$E(\sigma)$ is non-negative and integrable in $[0,+\infty)$ and $E'$ is bounded, so $E(\sigma)\to0$.

Now let $\gamma^*(\sigma_i)$ converge smoothly in the common parametrization to a limit $G_\infty$. Curvature stays positive, so all the expressions above pass to the limit, and
\[
0=\lim_iE(\sigma_i)
=\int_{G_\infty}\kappa_{E,\infty}^2
|\mathcal B_{E,\infty}|_E^2\,d\ell.
\]
The integrand is continuous and nonnegative, and $\kappa_{E,\infty}\geq\kappa_0$.
It follows that $\mathcal B_{E,\infty}=0$ everywhere. Hence the limit curve lies on a plane.

\smallskip
\noindent\emph{4. The limiting plane is spacelike and passes through the origin.}
From the hypothesis that the unit tangents of $\gamma$ are uniformly bounded, the limiting curve $\gamma(\infty)$ must be spacelike. Since the curve is regular and closed, its projective tangent directions cover the entire projective line of the plane containing it. Hence the limiting plane is spacelike.

Suppose finally that $0\notin \text{the limit plane}$. There is a linear function $\lambda$ vanishing at the origin and satisfying $\lambda(\gamma(\theta,\sigma_i))\geq h_0>0$ for some $\sigma_i$ sufficiently large.
The normalized flow equation imply
\[
\partial_\sigma\lambda(\gamma)=\partial_s^2\lambda(\gamma)+\lambda(\gamma).
\]
The maximum principle yields
\[
\lambda(\gamma(\theta,\sigma_i+\sigma))\geq h_0e^\sigma
,
\]
contradicting the uniform coordinate bound of Proposition~\ref{coordinate bound for nf}.
\end{proof}

Fix one subsequential limit. Apply a fixed pseudo-Euclidean isometry taking its spacelike plane to the $xy$-plane. Smooth convergence implies that the new planar projection is embedded and strictly convex at a sufficiently late time. The convexity preservation theorem and all the preceding boundedness estimates therefore apply in these new coordinates.

\section{Proof of Theorem \ref{main theorem 2}, part \romannumeral3: Isoperimetric convergence}\label{sec:isoperimetric-convergence}

\Needspace{5\baselineskip}
\noindent\textbf{Step 6: Prove $\frac{d\hat s}{ds}\to 1$.}

Recall that $\hat \gamma = P_{xy}\circ \gamma$, and under a reparameterization,
$$
\hat{\gamma}_\sigma = \left(\frac{d\hat s}{ds}\right)^2 \hat\gamma_{\hat s\hat s} + \hat \gamma.
$$

By Corollary \ref{limit curvature never vanishes}, $\gamma^*(\infty)=Q\gamma(\infty)$ is uniformly convex.

Recall that $\mathscr{T}(\sigma)$ is the set containing all chords and all osculating $2$-subspaces of $\gamma^*(\sigma)$. For pairs of parameter points separated by a fixed amount, embeddedness gives a positive limiting chord length. Near the diagonal, normalized secants converge uniformly to tangents; this follows by integrating the derivative in the common parametrization. The osculating planes also converge uniformly, since the limiting projected curvature is positive. Thus smooth convergence yields
$$
\Delta(\mathscr{T}(\sigma_i)) \to 0.
$$

By Lemma \ref{three point monotonicity}, we have
$$
 \Delta(\mathscr{T}(\sigma)) \to 0,
$$
so $\frac{d\hat s}{ds}\to 1$.

\Needspace{5\baselineskip}
\noindent\textbf{Step 7: Prove the isoperimetric ratio converges to $4\pi$.}

We use Gage's inequality \cite{Gag83,Gag84}.
\begin{theorem}\label{Gage's isoperimetric monotonicity}
For a closed convex $C^2$ planar curve of length $L$ and area $A>0$,
\[
\int k^2\,ds\geq\frac{\pi L}{A},
\]
with equality if and only if the curve is a circle.
\end{theorem}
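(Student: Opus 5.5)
The plan is to reduce the inequality to a Cauchy--Schwarz step plus one geometric lemma about the support function. Fix an origin $O$ in the interior of the convex region, to be specified later. Let $p=-\langle \gamma,N\rangle$ be the support function measured along the curve, where $N$ is the inward normal; then $p>0$. Because the tangent angle $\theta$ satisfies $d\theta=k\,ds$, we have
\[
\int_\gamma kp\,ds=\int_0^{2\pi}p\,d\theta=L ,
\]
which is Cauchy's formula. This uses only convexity ($k\ge0$) and is valid for $C^2$ curves whose curvature may vanish somewhere, since we integrate in $ds$ and never divide by $k$. Cauchy--Schwarz then gives
\[
L^2\le\int_\gamma k^2\,ds\int_\gamma p^2\,ds .
\]
The theorem therefore follows once we prove the \emph{key lemma}: for a suitable choice of $O$,
\[
\int_\gamma p^2\,ds\le\frac{LA}{\pi}.
\]
Indeed, dividing by $L\cdot LA/\pi>0$ yields $\int k^2\,ds\ge \pi L/A$.

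To prove the key lemma I would follow Gage's strategy of pairing opposite points. Parametrize by the normal angle $\theta$ wherever $k>0$, and let $w(\theta)=p(\theta)+p(\theta+\pi)$ be the width in direction $\theta$. I would write
\[
p^2(\theta)+p^2(\theta+\pi)=\tfrac12 w^2+\tfrac12\bigl(p(\theta)-p(\theta+\pi)\bigr)^2 .
\]
The origin $O$ is then chosen to make the odd part $p(\theta)-p(\theta+\pi)$ as small as possible in an averaged sense; a natural choice is a point minimizing $\int p^2\,ds$, where the first variation kills the first Fourier modes. For the symmetric part, $\int w^2\,ds$ is compared with the area through the central symmetrization $\tfrac12(K-K)$: its area is at most $A$ times a controlled factor by Brunn--Minkowski, and for a centrally symmetric curve the estimate reduces to the exact identities $A=\tfrac12\int p\,ds$ and $L=\int p\,d\theta$ together with Wirtinger's inequality. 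Points where $k=0$ are handled by approximating with strictly convex curves, since both sides of the inequality are continuous in $C^2$.

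Rigidity comes from analysing the equality cases. Equality in the theorem forces equality in Cauchy--Schwarz, so $k=cp$, and also equality in the key lemma. In the Wirtinger/Fourier step, equality means $p$ has no harmonics beyond the first. After recentring, $p$ is constant and the curve is a circle. Conversely, a circle of radius $\rho$ gives $\int k^2\,ds=2\pi/\rho=\pi L/A$.

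I expect the main obstacle to be the key lemma for curves that are \emph{not} centrally symmetric. The quantity $\int p^2\,ds=\int p^2(p+p'')\,d\theta$ is cubic in $p$, so a plain Fourier argument does not close. My first attempt would be Gage's own route: bound the odd part $p(\theta)-p(\theta+\pi)$ using the chosen origin, and control the cross terms with the elementary inequality $\int_\gamma p\,\bigl(p(\theta)-p(\theta+\pi)\bigr)\,ds\le0$, which should follow from the minimality of $O$. If this does not close, I would fall back on citing \cite{Gag83,Gag84}, as the paper does.
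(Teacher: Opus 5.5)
Your Cauchy--Schwarz reduction $L^2=(\int_\gamma kp\,ds)^2\le\int_\gamma k^2\,ds\int_\gamma p^2\,ds$ is correct and is Gage's first step. The paper itself only cites \cite{Gag83,Gag84}. Everything after that step, however, is the real content of the theorem, and your plan does not establish the key lemma, even for symmetric curves.

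\emph{The symmetric case is not Wirtinger.} In the normal-angle parametrization the key lemma reads $\int(p^3-2pp'^2)\,d\theta\le\bar p\int(p^2-p'^2)\,d\theta$ with $\bar p=L/2\pi$. Writing $p=\bar p+q$, it becomes
\[
\bar p\int(q'^2-2q^2)\,d\theta\;\ge\;\int q\,(q^2-2q'^2)\,d\theta .
\]
For symmetric curves $q$ has only even harmonics, so Wirtinger makes the left side coercive. The cubic right side is not controlled by any Fourier bound; one has to use the convexity constraint $p+p''\ge0$. So the cubic obstruction you flag at the end is already present in the centrally symmetric case, and Gage's symmetric lemma needs a genuinely geometric argument.

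\emph{The symmetrization step points the wrong way.} Brunn--Minkowski gives $\operatorname{area}\tfrac12(K-K)\ge A$, not an upper bound. The available upper bound (Rogers--Shephard) costs a factor $3/2$, and any factor $>1$ destroys both the sharp constant and the rigidity.

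\emph{The non-symmetric ideas do not connect.} The pairing identity for $p^2(\theta)+p^2(\theta+\pi)$ cannot be inserted into $\int p^2\,ds=\int p^2\rho\,d\theta$, because $\rho(\theta)\ne\rho(\theta+\pi)$. The minimizing origin only gives $\int_\gamma p\,\nu\,ds=0$, i.e.\ vanishing first harmonics of $p\rho$, not of $p$. The sign of $\int_\gamma p\,(p(\theta)-p(\theta+\pi))\,ds$ is asserted, not derived.

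\emph{The missing idea} is that you never need the key lemma for non-symmetric curves; Gage reduces the inequality itself to the symmetric case.
\begin{itemize}
\item For each $\theta$, join the two points with outer normals $\pm\nu(\theta)$. Replacing $\theta$ by $\theta+\pi$ swaps the two sides of this chord, so by the intermediate value theorem some such chord bisects the area.
\item For each arc of $\gamma$ cut off by this chord, take its union with its image under the point reflection through the chord's midpoint. Since the tangent lines at the endpoints are parallel, this gives two centrally symmetric, convex, piecewise $C^2$ curves $\gamma_1,\gamma_2$.
\item These satisfy $A_1=A_2=A$, $L_1+L_2=2L$ and $\int_{\gamma_1}k^2+\int_{\gamma_2}k^2=2\int_\gamma k^2$.
\item Adding the symmetric inequalities yields $\int_\gamma k^2\,ds\ge\pi(L_1+L_2)/(2A)=\pi L/A$.
\item Equality forces $\gamma_1,\gamma_2$ to be circles of the same area. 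Both halves of $\gamma$ are then semicircles on a common diameter, so $\gamma$ is a circle.
\end{itemize}
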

Define the dimensionless Gage deficit
\[
\mathcal F_G(\mathcal C)=1-\frac{\pi L(\mathcal C)}
 {A(\mathcal C)\int_{\mathcal C}k^2\,ds}.
\]
It is nonnegative, vanishes precisely on circles, and is continuous under regular $C^2$ convergence.

\begin{lemma}\label{uniform positive Gage functional}
For every $\epsilon>0$ there exist $\mu_0>0$ and $\sigma_0$ such that
\[
I(\sigma):=\frac{\hat L(\sigma)^2}{A(\sigma)}\geq4\pi+\epsilon,
\quad \sigma\geq\sigma_0
\quad\Longrightarrow\quad \mathcal F_G(\hat\gamma(\sigma))\geq\mu_0.
\]
\end{lemma}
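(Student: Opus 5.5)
We argue by contradiction combined with compactness. Suppose the implication fails for some $\epsilon>0$. Then for every $\mu_0=1/j$ and every $\sigma_0=j$ there is a time $\sigma_j\geq j$ with
\[
I(\sigma_j)\geq 4\pi+\epsilon \quad\text{and}\quad \mathcal F_G(\hat\gamma(\sigma_j))<1/j .
\]
So there is a sequence $\sigma_j\to\infty$ along which the isoperimetric ratio stays at least $4\pi+\epsilon$ while the Gage deficit tends to zero.

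First we extract a convergent subsequence. The projected curves $\hat\gamma(\sigma)$ satisfy the annular bound $B_R\setminus B_r$ from Theorem~\ref{main theorem}, and the higher derivative bounds of Corollary~\ref{coordinate derivative bound} hold on a positive-time tail. Using the polar-angle parametrization about the origin and Arzel\`a--Ascoli, we pass to a subsequence along which $\hat\gamma(\sigma_j)$ converges in $C^2$ (indeed smoothly) to a closed curve $\hat\gamma_\infty$.

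Next we check that $\hat\gamma_\infty$ is a nondegenerate convex curve, so that $\mathcal F_G$ is defined and continuous at it. Its length and area are bounded above and below: the curves lie in $B_R$ and enclose $B_r$, so $A\geq\pi r^2$ and $L\geq 2\pi r$. By Theorem~\ref{curvature lower bound} its curvature is bounded between two positive constants. Hence $\hat\gamma_\infty$ is a regular, strictly convex, embedded $C^2$ curve with $A(\hat\gamma_\infty)>0$ and $\int k^2\,d\hat s$ finite and positive. This is the point that needs care: continuity of $\mathcal F_G$ under $C^2$ convergence requires the limit to be regular, with $A$ and $\int k^2$ bounded away from $0$. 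Without the uniform annulus, the curvature bounds and the lower bound on $\hat g$ from the graph estimates, a degenerate limit such as a segment could occur. With them, $L$, $A$ and $\int k^2\,d\hat s$ all pass to the limit.

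Now we derive the contradiction. By continuity, $\mathcal F_G(\hat\gamma_\infty)=\lim_j\mathcal F_G(\hat\gamma(\sigma_j))=0$. The equality case of Theorem~\ref{Gage's isoperimetric monotonicity} then forces $\hat\gamma_\infty$ to be a circle, so $L(\hat\gamma_\infty)^2/A(\hat\gamma_\infty)=4\pi$. On the other hand, $L^2/A$ is continuous under the same convergence, so
\[
\frac{L(\hat\gamma_\infty)^2}{A(\hat\gamma_\infty)}=\lim_j I(\sigma_j)\geq 4\pi+\epsilon ,
\]
which is a contradiction. Hence $\mu_0>0$ and $\sigma_0$ exist as claimed. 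In fact $\sigma_0$ can be taken to be any fixed positive time beyond which the uniform estimates hold.
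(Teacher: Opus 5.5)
Your proposal is correct and follows essentially the same route as the paper: argue by contradiction, extract a $C^2$-convergent subsequence in the polar-angle parametrization using Corollary~\ref{coordinate derivative bound}, pass $\mathcal F_G$ and $L^2/A$ to the limit, and invoke the equality case of Gage's inequality. Your explicit check that the limit is nondegenerate makes precise a continuity step the paper leaves implicit; in particular, positive area from the inner ball and $\int k^2\,d\hat s\geq 4\pi^2/L>0$ ensure that $\mathcal F_G$ is continuous at the limit.
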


\begin{proof}
Suppose, on the contrary, that there exists a time sequence $\sigma_i\to \infty$ such that:
$$
I(\sigma_i)\geq4\pi+\epsilon,\quad \mathcal F_G(\hat\gamma(\sigma_i))\to0.
$$
From Corollary \ref{coordinate derivative bound}, there exists a subsequence of   $\{\hat\gamma(\sigma_i)\}$(reparameterized by the polar angle if necessary) , which converges to some $C^2$ curve $\hat\gamma_0$ with respect to the $C^2$ topology. Hence $F_G(\hat\gamma_0)=0$ and $\hat\gamma_0$ must be a circle, which contradicts to $I(\sigma_i)\geq4\pi+\epsilon$.
\end{proof}

\begin{corollary}\label{converge to a circle}
The isoperimetric ratio $I(\sigma)$ tends to $4\pi$ as $\sigma\to\infty$.
\end{corollary}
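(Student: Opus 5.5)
The plan is to compute the evolution of the isoperimetric ratio $I(\sigma)=\hat L^2/A$ along the weighted projected flow $\hat\gamma_\sigma=a\hat\gamma_{\hat s\hat s}+\hat\gamma$, where $a=(d\hat s/ds)^2$. We then compare it with the unweighted Gage computation, using Step 6 ($a\to1$ uniformly) and Lemma~\ref{uniform positive Gage functional}.

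\textbf{The evolution of $I$.} Since $\hat g_\sigma=(1-a\hat k^2)\hat g$, we have
\[
\hat L'=\hat L-\int a\hat k^2\,d\hat s,\qquad A'=2A-\int a\hat k\,d\hat s.
\]
The scaling terms cancel in $I$, which gives
\[
\frac{I'}{I}=-\frac{2}{\hat L}\int a\hat k^2\,d\hat s+\frac{1}{A}\int a\hat k\,d\hat s .
\]
Write $a=1+\varepsilon(\hat\theta,\sigma)$ with $\eta(\sigma):=\max|\varepsilon|\to0$. By the uniform bounds $\hat k\le K_1$, $L_0\le\hat L\le L_1$ and $\pi\le A\le\pi a_1$, the $\varepsilon$-terms contribute at most $C\eta(\sigma)$. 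The unweighted part equals
\[
-\frac{2}{\hat L}\int\hat k^2\,d\hat s+\frac{2\pi}{A}
=-\frac{2}{\hat L}\Bigl(\int\hat k^2\,d\hat s\Bigr)\Bigl(1-\frac{\pi\hat L}{A\int\hat k^2\,d\hat s}\Bigr)
=-\frac{2}{\hat L}\Bigl(\int\hat k^2\,d\hat s\Bigr)\mathcal F_G(\hat\gamma(\sigma)).
\]
Moreover $\int\hat k^2\,d\hat s\ge(2\pi)^2/\hat L\ge 4\pi^2/L_1$. Hence
\[
\frac{I'}{I}\le -c\,\mathcal F_G(\hat\gamma(\sigma))+C\eta(\sigma)
\]
with $c>0$ independent of $\sigma$.

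\textbf{Conclusion.} By the classical isoperimetric inequality, $I\ge4\pi$. Fix $\epsilon>0$ and take $\mu_0,\sigma_0$ from Lemma~\ref{uniform positive Gage functional}. Enlarge $\sigma_0$ so that $C\eta(\sigma)<c\mu_0/2$ for $\sigma\ge\sigma_0$. Then, whenever $I(\sigma)\ge4\pi+\epsilon$ and $\sigma\ge\sigma_0$, we get $I'\le -c\mu_0 I/2\le -2\pi c\mu_0$.

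First, $I$ cannot stay above $4\pi+\epsilon$ forever, because $I$ is bounded below. So there is some $\sigma_1\ge\sigma_0$ with $I(\sigma_1)<4\pi+\epsilon$.

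Second, once $I$ drops below $4\pi+\epsilon$ after $\sigma_0$, it never exceeds $4\pi+\epsilon$ again. At a first upcrossing time we would have $I=4\pi+\epsilon$ and $I'\ge0$, contradicting $I'<0$ there. Hence $\limsup I\le4\pi+\epsilon$ for every $\epsilon$, and therefore $I\to4\pi$.

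\textbf{Main obstacle.} The delicate point is the error control for $a\to1$. Step 6 only gives $\Delta(\mathscr T)\to0$, so I will use the inclination bound to get $|a-1|\le C\tan^2\Delta(\mathscr T(\sigma))$ uniformly in $\hat\theta$. This bound is what makes $\eta(\sigma)\to0$ legitimate. It also uses the fact that the coordinates were changed after Theorem~\ref{planar subsequential limit}, so that the limit plane is the $xy$-plane.

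Two further points need justification:
\begin{itemize}
\item The area and length formulas are valid in the reparametrized (tangentially adjusted) gauge, because tangential terms do not change $\hat L$ or $A$.
\item The uniform curvature upper bound is needed to absorb $\int|\varepsilon|\hat k^2\,d\hat s$; it comes from Corollary~\ref{coordinate derivative bound}.
\end{itemize}
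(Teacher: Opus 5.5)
Your proposal is correct and follows the paper's proof essentially step for step. You split $I'$ (written as $I'/I$) into the unweighted Gage term, made uniformly negative by Lemma~\ref{uniform positive Gage functional} when $I\ge4\pi+\epsilon$, plus an error of size $O(\|a-1\|_\infty)$, and then run the same sublevel-set trapping argument; your explicit bound $|a-1|\le C\tan^2\Delta(\mathscr T(\sigma))$ simply spells out the paper's Step~6 conclusion $d\hat s/ds\to1$.
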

\begin{proof}
Write $\eta=(d\hat s/ds)^2$, so $\eta\to1$ uniformly. The projected flow gives
\[
\hat L'=\hat L-\int\eta\hat k^2\,d\hat s,\qquad
A'=2A-\int\eta\hat k\,d\hat s.
\]
Consequently
\[
I'=\frac{\hat L}{A^2}
\left(-2A\int\eta\hat k^2\,d\hat s
+\hat L\int\eta\hat k\,d\hat s\right).
\]
The length and area have positive upper and lower bounds, and $\hat k$ has an upper bound. For any fixed $\epsilon>0$, Lemma~\ref{uniform positive Gage functional} gives, whenever $I\geq4\pi+\epsilon$ at sufficiently large times,
\[
\int\hat k^2\,d\hat s-\frac{\pi\hat L}{A}
\geq\mu_0\int\hat k^2\,d\hat s.
\]
The expression in parentheses in the formula for $I'$ is therefore at most
\[
-2A\mu_0\int\hat k^2\,d\hat s
+\|\eta-1\|_\infty
 \left(2A\int\hat k^2\,d\hat s+2\pi\hat L\right).
\]
The first term is bounded above by a strictly negative constant, whereas the error tends uniformly to zero. Thus $I'\leq-\mu<0$ whenever $I\geq4\pi+\epsilon$ after a sufficiently large time. The ratio must enter the sublevel set $I<4\pi+\epsilon$, since otherwise it would eventually be less than $4\pi$. It cannot cross this level upward because the derivative is negative there. Letting $\epsilon\downarrow0$ proves the claim.
\end{proof}

\Needspace{5\baselineskip}
\noindent\textbf{Step 8: Fix the radius and center.}

Set $J(\sigma)=\int\eta\hat k\,d\hat s$. Then $J(\sigma)\to2\pi$. Since $A$ is bounded and $A'=2A-J$, integration to infinity gives
\[
A(\sigma)=\int_\sigma^\infty e^{-2(u-\sigma)}J(u)\,du\longrightarrow\pi.
\]
Every subsequential limit is therefore a unit circle in the $xy$-plane.

Suppose one such circle is $S^1+v$ with $v\ne0$. The uniform inner ball gives $|v|<1$. Choose
\[
1<\lambda<(1-|v|^2)^{-1/2},\qquad
\lambda^{-1}<\alpha<1.
\]
The circle $\alpha\lambda(S^1+v)$ strictly encloses $S^1+v$, since $|v|<1$ and $\alpha\lambda>1$. For a sufficiently late member $\hat\gamma(\sigma_i)$ of the convergent subsequence, it also strictly encloses $\hat\gamma(\sigma_i)$, and $\eta>\alpha^2$ for all $\sigma\geq\sigma_i$.

The outer barrier $w_\sigma=\alpha^2w_{\hat s\hat s}+w$ starting from this circle sweeps across the origin in finite normalized time. Indeed, after dividing by $\alpha$, Example~\ref{example circle} applies with $\lambda^{-2}>1-|v|^2$. Lemma~\ref{strict barrier lemma} would force $\hat\gamma$ to cease enclosing the origin, a contradiction. Thus $v=0$.

Step 6 shows that every subsequential limit lies in this same $xy$-plane; the preceding radius and center arguments make each one its centered unit circle. Precompactness and uniqueness of all subsequential limits imply smooth convergence of the whole normalized flow. This proves the qualitative round-point alternative of Theorem~\ref{main theorem 2}; its exponential rate is established in Appendix~\ref{sec:exponential-convergence}.

\section{Singularity examples}\label{sec:examples}

\begin{definition}
A finite-time spacelike CSF has a \emph{null-degenerate singularity} if
\[
\limsup_{t\uparrow T}\max_{S^1}|\Gamma_s|_E=\infty.
\]
\end{definition}

Theorem~\ref{main theorem 2} gives a dichotomy, but does not show that both alternatives occur. We first recall the round-point behavior under the strong spacelike hypothesis. We then retain a twofold-projection null-degenerate singularity example based on total curvature, and construct another null-degenerate singularity example satisfying the one-to-one projection hypothesis of the main theorem.

\subsection{Strong spacelike curves of index 1}

\begin{theorem}\label{not becoming null}
Let $\Gamma:S^1\times[0,T)\to\mathbb R^{2,k}$ be a maximal CSF whose initial curve is closed, strong spacelike, and of index 1. Then it shrinks to a point $a$, and $[2(T-t)]^{-1/2}(\Gamma(t)-a)$ converges smoothly to a unit circle in a spacelike plane.
\end{theorem}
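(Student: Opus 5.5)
We will derive this from Theorem~\ref{main theorem 2}, by checking its hypotheses and excluding alternative (1). This needs three things: a one-to-one convex projection onto a spacelike plane, preservation of strong spacelikeness and of index~1 under the flow, and a uniform bound on $|\Gamma_s|_E$.

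\emph{Step 1: convex projection.} For a closed strong spacelike curve in $\mathbb R^{2,k}$, the index is the winding of the osculating (spacelike) planes, equivalently the degree of the projection onto a maximal spacelike plane. Following \cite{YMW16,YM19}, index~1 means the following: after a pseudo-Euclidean isometry, the curve is a graph over a closed convex curve in the $xy$-plane. The timelike coordinates $w_j$ are then functions of the projected point. Indeed, the spacelike tangent has $|w_s|<|P_{xy}\Gamma_s|$, so $P_{xy}$ is an immersion. Because the osculating planes are spacelike, $P_{xy}$ is nondegenerate on each osculating plane, so the projected curve has nonvanishing curvature. A locally convex closed planar curve of rotation index~1 is embedded and convex. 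Hence $P_{xy}\circ\Gamma(\cdot,0)$ is a one-to-one parametrization of a convex curve, and Theorem~\ref{main theorem 2} applies: $T<\infty$, and either null degeneration occurs or we get round-point convergence.

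\emph{Step 2: excluding null degeneration.} This is the main obstacle. I would use the inclination estimate of Lemma~\ref{three point monotonicity}, applied in the Euclidean copy $Q$. Fix $t_0>0$. There $\Delta(\mathscr T(t_0))<\pi/2$, and $\Delta(\mathscr T(t))$ is nonincreasing, so every chord and tangent $v$ satisfies
\[
\sum_j w_j(v)^2\le c_1^2\bigl(x(v)^2+y(v)^2\bigr).
\]
This alone does not stop tangents from approaching the null cone, since it only uses Euclidean angles. So we must instead choose the plane well: pick the initial spacelike plane so that $c_1<1$ at the initial time. We then need to show that the \emph{pseudo-Euclidean} condition $c_1<1$ is preserved. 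This gives
\[
\langle\Gamma_s,\Gamma_s\rangle\ge(1-c_1^2)\,|P_{xy}\Gamma_s|^2,
\]
and hence $|\Gamma_s|_E^2\le(1+c_1^2)/(1-c_1^2)$.

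To get $c_1<1$ initially, I would use the strong spacelike, index-1 structure. Every three-point plane $\mathrm{span}\{\Gamma(\theta_1)-\Gamma(\theta_2),\Gamma(\theta_1)-\Gamma(\theta_3)\}$ of such a curve should be spacelike; this is the spanning maximal-disk/Crofton picture of \cite{YM19}. By compactness this yields a uniform timelike margin, so $\Delta(\mathscr B(0))<\pi/4$ in suitable coordinates. Lemma~\ref{three point monotonicity} holds for every $t$ with the same monotone quantity, so $\Delta(\mathscr B(t))\le\Delta(\mathscr B(0))<\pi/4$ for all $t$. (If convexity of the projection only becomes strict at positive times, we start at $t_0$ and use the closure of $\mathscr B$ instead.) The key claim to verify is that the three-point planes are strictly spacelike, uniformly. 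If a direct proof fails, I would try a maximum-principle argument on the spacelikeness of the chord planes. Alternatively, I would cite \cite{AZ26}: spacelike convexity is preserved, and the curvature vector remains spacelike with controlled pinching, which bounds $|\Gamma_s|_E$ through the graph representation.

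\emph{Step 3: conclusion.} With $|\Gamma_s|_E$ bounded, alternative (1) of Theorem~\ref{main theorem 2} fails. Alternative (2) then gives a point $a$ and smooth (exponential) convergence of $[2(T-t)]^{-1/2}(\Gamma(t)-a)$ to a centered unit circle in a spacelike plane, which is the statement.
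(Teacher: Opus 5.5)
Your skeleton matches the paper's. Step~1 is exactly its opening: the projection is regular with nonzero curvature and rotation index one, hence embedded and strictly convex. The plan for Step~2 is also the paper's: get a strict bound $\Delta<\pi/4$ at the initial time, propagate it with Lemma~\ref{three point monotonicity}, then exclude alternative~(1) of Theorem~\ref{main theorem 2}.

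\textbf{The gap.} The step that carries all the content, the strict initial bound, is only asserted (``should be spacelike''). Two things are missing.

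\emph{You aim at a harder set than needed.} Lemma~\ref{three point monotonicity} already gives $\Delta(\mathscr B)\le\Delta(\mathscr T)$, so it suffices to show that every chord is spacelike. Osculating planes are spacelike by hypothesis. Normalized chord directions extend continuously to the diagonal by unit tangents, since the curve is embedded. Compactness then yields $\Delta(\mathscr T(0))<\pi/4$. Working with $\mathscr B$ directly would force you to control all of its degenerate limits.

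\emph{You need an actual argument that chords are spacelike.} The paper uses a rotation-index homotopy.
\begin{itemize}
\item Suppose $v=(X_v,W_v)$ is a chord with $|W_v|\ge|X_v|$, and put $J=-W_vX_v^{\mathsf T}/|W_v|^2$.
\item Then $\Sigma=\{(X,JX)\}$ is Euclidean-orthogonal to $v$, and $\|J\|=|X_v|/|W_v|\le1$.
\item Hence every plane $\Sigma_\lambda=\{(X,\lambda JX)\}$, $\lambda\in[0,1]$, satisfies $\Delta(\Sigma_\lambda)\le\pi/4$.
\item Osculating directions have inclination $<\pi/4$, so Lemma~\ref{inclination angle} makes each projection onto $\Sigma_\lambda$ regular with nonzero curvature.
\item The rotation index is therefore constant in $\lambda$, equal to one, so the projection onto $\Sigma$ is embedded.
\item But the two endpoints of $v$ have the same image, a contradiction.
\end{itemize}

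\textbf{Your fallbacks do not close the gap.} A maximum-principle argument in time addresses the wrong issue. Preservation is already supplied by Lemma~\ref{three point monotonicity}; what is missing is a static property of the initial curve.

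The route through \cite{AZ26}, as you phrase it, also fails. A spacelike curvature vector does not bound $|\Gamma_s|_E$: in Proposition~\ref{clam-shell degeneration}, strong spacelikeness persists up to $T$ and yet the flow is null-degenerate. You also do not explain how pinching would produce a Euclidean tangent bound. The index-one hypothesis has to enter through a global argument such as the chord homotopy above. Quoting the round-point theorem of \cite{AZ26} outright would prove the statement, but only by importing it.
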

\begin{proof}
The $xy$-projection is regular with nonzero curvature; its rotation index is one, so it is embedded and strictly convex. In particular $\Gamma(0)$ is embedded. We next show that every nonzero chord of $\Gamma(0)$ is spacelike. Otherwise let $v$ be a nonspacelike chord and \textbf{use the Euclidean coordinate copy}. There is a two-plane $\Sigma\subset v^{\perp_E}$ with $\Delta(\Sigma)\leq\pi/4$. Write
$$
\Sigma = \{(X,JX)|x\in xy\text{-plane}\}
$$
as a graph over the $xy$-plane. Let
$$
\Sigma_\lambda = \{(X,\lambda JX)|x\in xy\text{-plane}\},\,\,\,\lambda\in [0,1].
$$
Then $\Sigma_0 = xy\text{-plane}$, $\Sigma_1=\Sigma$, and $\Delta(\Sigma_\lambda)\leq \pi/4$.

Every direction in an osculating plane of the strong spacelike curve has inclination strictly less than $\pi/4$. Lemma~\ref{inclination angle} therefore makes each Euclidean orthogonal projection onto $\Sigma_\lambda$ regular with nonzero curvature. By continuity, the projected rotation index stays one. The projected curve to $\Sigma$ is consequently embedded and strictly convex. This contradicts the two endpoints of the chord $v$ having the same projection.

The normalized chord directions have a compact closure obtained by adding tangent directions. Together with the compact family of osculating-plane directions, their strict spacelikeness yields
\[
\Delta(\mathscr T(0))<\pi/4.
\]
Lemma~\ref{three point monotonicity} preserves this strict bound, so $|\Gamma_s|_E$ remains uniformly bounded. The $xy$-projection is embedded and convex by the same index-one argument. Theorem~\ref{main theorem 2} now applies.
\end{proof}
This gives an alternative proof of the corresponding curve case of \cite{AZ26}.

\subsection{The clam-shell example: a twofold convex projection}

The clam-shell construction in \cite[Example 4.1]{YMW16} is
\[
\Gamma_0(\theta)=(\cos\theta,\sin\theta,h(\theta)),
\qquad \theta\in\mathbb R/(4\pi\mathbb Z),
\]
where $h(\theta+2\pi)=-h(\theta)$. In that example
$h'^2+h''^2\leq\varepsilon^2<1$, and the total pseudo-Euclidean curvature satisfies
\[
K(0)\geq\frac{2\pi}{\sqrt{1-\varepsilon^2}}>4\pi
\qquad\text{if }\sqrt3/2<\varepsilon<1.
\]
The displayed piecewise function in that reference is $C^2$. A sufficiently small $4\pi$-periodic smoothing, using an even mollifier, preserves its half-period antisymmetry, the strict inequality $h'^2+h''^2<1$, and $K(0)>4\pi$. In what follows, $\Gamma_0$ denotes such a smooth clam-shell immersion.

\begin{proposition}\label{clam-shell degeneration}
The maximal spacelike CSF starting from $\Gamma_0$ has finite maximal time and a null-degenerate singularity.
\end{proposition}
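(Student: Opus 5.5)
We argue by contradiction against the bounded-tangent alternative. The idea is to play the monotonicity of the pseudo-Euclidean total curvature $K(t)=\int k\,ds$ against what a bounded-tangent collapse would force. The initial data satisfy $K(0)>4\pi$, while a non-degenerate collapse of a curve whose $xy$-projection has rotation index $2$ should drive $K$ down to $4\pi$.

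\emph{Step 1: the projection stays a locally convex curve of rotation index $2$.} In $\mathbb R^{2,1}$ a spacelike tangent $(x',y',z')$ satisfies $x'^2+y'^2>z'^2$. Hence $P_{xy}\circ\Gamma$ is regular and $d\hat s/ds\geq 1$ (up to the factor coming from $|\Gamma_s|$), as long as the flow exists. After the tangential reparametrization of Section~5, the projected curvature satisfies $\hat k_t=(a\hat k)_{\hat s\hat s}+a\hat k^3$ with $a=(d\hat s/ds)^2>0$. The maximum principle, as in Theorem~\ref{convexity keeping}(2), keeps $\hat k>0$. The rotation index is a homotopy invariant of regular curves, so it stays $2$.

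\emph{Step 2: $T<\infty$ via the algebraic (winding-weighted) area.} The integral $\mathcal A(t)=\frac12\oint(x\,dy-y\,dx)$ over the doubly wound projection is positive, since a locally convex curve of rotation index $2$ has nonnegative winding number everywhere, positive on an open set. It satisfies
\[
\mathcal A'(t)=-\int a\hat k\,d\hat s\leq -\int\hat k\,d\hat s=-4\pi,
\]
using $a\geq1$. Hence $T\leq \mathcal A(0)/(4\pi)$.

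\emph{Step 3: symmetry and curvature monotonicity.} The map $(x,y,z)\mapsto(x,y,-z)$, composed with the parameter shift $\theta\mapsto\theta+2\pi$, is an isometry of $\mathbb R^{2,1}$ fixing $\Gamma_0$. By uniqueness it fixes $\Gamma(t)$ for all $t$. I will then check that the strong spacelike condition is preserved under the flow, as long as tangents stay bounded. With that, the monotonicity noted in the introduction gives $K(t)\geq K(0)>4\pi$ for all $t<T$.

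\emph{Step 4: bounded tangents force $K\to4\pi$.} Assume now that $|\Gamma_s|_E$ stays bounded. Then $a\in[1,c_2^2]$, and the normalized projection solves a weighted normalized flow with weights pinched between two positive constants. We will rerun Theorem~\ref{main theorem} on the doubled cover. The barriers of Section~3 act on the convex hull, and the area argument uses $\mathcal A$. This yields annular bounds for the normalized curve. The symmetry forces the shrinking point to lie in $\{z=0\}$.

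The graph charts of Section~5 are applied on each sheet of the double cover. These, together with the inclination bound coming from bounded tangents and Theorem~\ref{Nash and Schauder estimate}, then give smooth subsequential limits. The Euclidean-copy torsion dissipation of Theorem~\ref{planar subsequential limit} makes every limit planar. The antisymmetry makes the limit plane the $xy$-plane itself, and the limit is a doubly covered locally convex closed curve there. For such a planar limit the pseudo-Euclidean total curvature equals the planar total curvature, namely $4\pi$. Since $K$ is scale invariant and continuous under smooth convergence with tangents bounded away from the null cone, $K(t)\to4\pi$. This contradicts $K(t)>4\pi$, so the first alternative of Theorem~\ref{main theorem 2} (null degeneration) must hold.

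\emph{Main obstacle.} The hardest step is Step 4, specifically adapting the dichotomy and compactness to the twofold projection. The intersection-based barrier arguments of Lemma~\ref{strict barrier lemma} require embedded convex curves, whereas here the curve is immersed with self-intersections. I would first try to work with the convex hull of the projection for the outer (circle and line) barriers. For the inner-ball and tube barriers I would lift to the double cover, passing to a $\mathbb Z_2$-symmetric description in polar angle $\phi\in\mathbb R/4\pi\mathbb Z$. There I would use the support function $p(\phi)$, which satisfies a scalar parabolic equation. The comparison principle for that equation replaces geometric avoidance.
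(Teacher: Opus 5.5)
\textbf{Overall.} Your skeleton matches the paper's proof. $K$ is nondecreasing and $K>4\pi$; the reflection $(x,y,z)\mapsto(x,y,-z)$ composed with $\theta\mapsto\theta+2\pi$ is preserved by uniqueness; projected area forces $T<\infty$. Under a bounded-tangent hypothesis, the dichotomy, the interior estimates, the projected-curvature lower bound (Theorem~\ref{curvature lower bound}, which you use implicitly) and the Euclidean torsion dissipation give a planar, doubly covered limit with $K=4\pi$. This contradicts $K\ge K(0)>4\pi$. Two points need correction, and the second is the key idea you are missing.

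\textbf{Step 2 rests on a false lemma.} A locally strictly convex closed curve of rotation index $2$ can have negative winding numbers and negative algebraic area. Traverse an equilateral triangle clockwise, replace each right turn by $2\pi/3$ with a small left loop turning through $4\pi/3$, and bend the edges slightly to the left. The curvature is positive and the total turning is $4\pi$, yet the winding number about interior points of the triangle is $-1$. For small loops the algebraic area is negative. So positivity of $\mathcal A$ cannot come from Step 1 alone, and as written Step 2 does not bound $T$.

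\textbf{The repair, and why your main obstacle disappears.} Use your own Step 3 first. The symmetry gives $X(\theta+2\pi,t)=X(\theta,t)$, so the projection is one closed curve on $\mathbb R/2\pi\mathbb Z$ traced twice; it has no transversal self-intersections. The quotient curve has $\hat k>0$ and rotation index $1$, hence is embedded and strictly convex. The paper gets this by the zero-number argument on the quotient. Since $p=dz/d\hat s$ changes sign under the deck transformation, the weight $\eta=(1-p^2)^{-1}$ is $2\pi$-periodic. Thus the quotient is a genuine weighted flow of an embedded convex curve with $1\le\eta\le(M^2+1)/2$, and the paper descends to this quotient.
\begin{itemize}
\item The quotient domain has initial area $\pi$ and $A'\le-2\pi$, so $T\le 1/2$.
\item Theorem~\ref{main theorem}, the barriers of Section~3, and Steps~3--4 then apply verbatim to the quotient. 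No convex-hull or support-function comparison on $\mathbb R/4\pi\mathbb Z$ is needed; a $\mathbb Z_2$-symmetric support function there is just the quotient's.
\item Every barrier in Section~3 (circles, lines, grim reapers, the body $K_R$) is an outer barrier enclosing the evolving curve, so no inner-barrier comparison arises at all.
\item Only the torsion-dissipation step is run on the full $4\pi$-periodic covering curve. This is legitimate because Proposition~\ref{total curvature evolution} uses periodicity, not embeddedness.
\end{itemize}

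\textbf{The check deferred in Step 3.} It is the maximum principle for $k_t=k_{ss}+k^3+k\tau^2$. In $\mathbb R^{2,1}$ the torsion vector of a strong spacelike curve is timelike, so $\tau^2\ge0$. This keeps $\min k$ nondecreasing, so strong spacelikeness persists for all $t<T$ without any tangent bound.
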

\begin{proof}
\emph{Preservation of strong spacelikeness.}
As long as the curve is strong spacelike, its pseudo-Euclidean curvature satisfies
\[
k_t=k_{ss}+k^3+k\tau^2.
\]
The maximum principle gives $\min k(\cdot,t)\geq\min k(\cdot,0)>0$. If strong spacelikeness were first lost at a smooth spacelike time $t_*<T$, continuity of $\langle\Gamma_{ss},\Gamma_{ss}\rangle$ would force $k^2$ to reach zero, contradicting this bound. Thus strong spacelikeness persists for all $t<T$. Proposition~\ref{total curvature evolution}, with no normalization, gives
\begin{equation}\label{clam-total-curvature}
K'(t)=\int_{\Gamma(t)}k\tau^2\,ds\geq0,
\qquad K(t)\geq K(0)>4\pi.
\end{equation}

\emph{Descent of the planar projection.}
Let $\mathcal R(x,y,z)=(x,y,-z)$. In the fixed-parameter parabolic gauge
\[
\Gamma_t=\frac{\Gamma_{\theta\theta}}
 {\langle\Gamma_\theta,\Gamma_\theta\rangle},
\]
the maps $\Gamma(\theta+2\pi,t)$ and $\mathcal R\Gamma(\theta,t)$ solve the same initial-value problem. Uniqueness therefore preserves their equality. Writing $\Gamma=(X,z)$ gives
\[
X(\theta+2\pi,t)=X(\theta,t),\qquad
z(\theta+2\pi,t)=-z(\theta,t).
\]
The projected curve descends to the quotient circle $\mathbb R/(2\pi\mathbb Z)$. The induced scalar parabolic equation preserves its convexity and embeddedness, by the same zero-number argument as in Theorem~\ref{convexity keeping}, now applied on the quotient single-covered planar curve (denoted by $\hat \Gamma$).

With $\hat s$ denoting arclength on $\hat \Gamma$, put $p=dz/d\hat s$ along either lifted branch. Then
\[
ds^2=(1-p^2)d\hat s^2,\qquad
\eta=\left(\frac{d\hat s}{ds}\right)^2=\frac1{1-p^2}.
\]
The two branches have opposite $p$ and the same $\eta$, so the projected weighted flow is well defined on $\hat \Gamma$. Its enclosed area $A$ satisfies
\[
A'(t)=-\int_{\hat \Gamma(t)}\eta\hat k\,d\hat s
\leq-2\pi.
\]
Since $A(0)=\pi$, this proves $T\leq1/2$.

\emph{Compactness under the contrary tangent bound.}
Suppose $|\Gamma_s|_E\leq M$ on $[0,T)$. Then
\begin{equation}\label{clam-weight-bound}
|\Gamma_s|_E^2=\frac{1+p^2}{1-p^2},\qquad
1\leq\eta\leq\frac{M^2+1}{2}.
\end{equation}
This replaces the all-chord inclination estimate from Section 6, which cannot be used here: corresponding points on the two sheets determine nonzero vertical chords.

Applying Step 3 in Section \ref{sec:tangent-estimates} on the quotient curve $\hat\Gamma$ and the normalized projection quotient curve $\hat \gamma$ gives uniform inner and outer radii for the normalized projection. Then, locally view the normalized curve $\gamma$ as a graph and applying interior estimates yields all derivative bounds and a smooth subsequential limit of the normalized flow.

\emph{Planarity and the contradiction.}
Applying Step 4 on $\hat\Gamma$ gives a positive lower bound for its normalized curvature.

The Euclidean total-curvature dissipation proof in Step 5 only uses local derivative bounds, positive Euclidean curvature, and a closed parameter domain; it does not require embeddedness. It therefore applies to the full covering curve and shows that every smooth subsequential limit is planar. Its projection is a nondegenerate strictly convex base curve traversed twice, whose total curvature is $4\pi$.

Therefore \eqref{clam-total-curvature} implies
\[
4\pi=\lim_{i\to\infty}K(t_i)\geq K(0)>4\pi,
\]
a contradiction. So the tangent uniform bound is impossible.
\end{proof}

We now give an null-degenerate example with a \emph{one-to-one} convex projection.

\subsection{An area-bivector obstruction to point collapse}
Work first in $\mathbb R^{2,1}=\mathbb R^2\times\mathbb R$. Write
$$
\Gamma=(X,z),\,\,\,\,X\in \mathbb R^2,z\in \mathbb R
$$
and orient the embedded convex projection counterclockwise. Define
\begin{equation}\label{bivector-definition}
\mathcal A(\Gamma)=\frac12\oint\Gamma\wedge d\Gamma
=\mathcal A_1\,e_1\wedge e_2+m_1e_1\wedge e_3+m_2e_2\wedge e_3,
\end{equation}
where
\[
\mathcal A_1=\frac12\oint\det(X,dX)>0,\qquad
m=(m_1,m_2)=\oint X\,dz.
\]
These quantities are translation invariant, and
$\langle\mathcal A,\mathcal A\rangle=\mathcal A_1^2-|m|^2$. In particular, $\mathcal{A}_1$ is the area enclosed by the projected curve $X$.

\begin{lemma}\label{bivector-monotonicity}
Along a smooth spacelike CSF with an embedded strictly convex planar projection, $L_v(t)=\mathcal A_1(t)-v\cdot m(t)$ is strictly decreasing for every fixed unit vector $v\in\mathbb R^2$. Consequently $D(t)=\mathcal A_1(t)-|m(t)|$ is strictly decreasing.
\end{lemma}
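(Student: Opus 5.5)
The plan is to compute $\frac{d}{dt}\mathcal A$ directly from $\Gamma_t=\Gamma_{ss}$. Then I will express $L_v'$ as a single integral over the convex projection, written in its tangent-angle parameter. Finally I will show this integral is negative after one exact integration by parts. There is no chance of a pointwise argument, since the osculating planes need not be spacelike.

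\emph{Step 1 (evolution of the bivector).} Write $\mathcal A=\frac12\oint\Gamma\wedge\Gamma_s\,ds$. Differentiating and integrating by parts on the closed curve gives $\mathcal A'=\oint\Gamma_{ss}\wedge\Gamma_s\,ds$; the boundary-free integration absorbs the time derivative of $ds$. Reading off components with $\Gamma=(X,z)$, and using the antisymmetric form of the $e_i\wedge e_3$ part, I expect
\[
L_v'=-\oint\det\bigl(\Gamma_s,\Gamma_{ss},W\bigr)\,ds,
\]
where $W=(a,1)$ is a fixed Euclidean vector and $a\in\mathbb R^2$ is the unit vector obtained by rotating $v$ by a right angle. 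In particular $W$ is null, and the integrand is a $3\times3$ Euclidean determinant.

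\emph{Step 2 (pass to the projected curve).} Let $\hat s$ be the arclength of $X$, with unit tangent $\tau$ and normal $n=J\tau$, and let $\hat k>0$. Put $p=dz/d\hat s$, so that $|p|<1$ by spacelikeness and $ds/d\hat s=\sqrt{1-p^2}$. Writing $\Gamma_s=\lambda(\tau,p)$ with $\lambda^2=(1-p^2)^{-1}$, the part of $\Gamma_{ss}$ parallel to $\Gamma_s$ drops out of the determinant. The integral then becomes
\[
L_v'=-\oint\frac{\hat k\bigl(1+p\cos\psi\bigr)-p_{\hat s}\sin\psi}{1-p^2}\,d\hat s,
\]
where $\psi$ is the angle between $a$ and $\tau$, up to a fixed orientation convention I would double-check. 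By strict convexity the tangent angle $\varphi$ is a global parameter with $d\varphi=\hat k\,d\hat s$ and $d\psi=\pm d\varphi$.

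\emph{Step 3 (the key integration by parts; main obstacle).} The term $-p_\varphi\sin\psi/(1-p^2)$ has no sign. I will remove it by writing it as an exact $\varphi$-derivative of $F(p,\psi)=-\sin\psi\,\operatorname{artanh}p$ plus a zeroth-order remainder. Since $F$ is periodic, the exact part integrates to zero, and what remains is
\[
L_v'=-\oint g\bigl(p,\cos\psi\bigr)\,d\varphi,\qquad
g(p,c)=\frac{1+pc}{1-p^2}-c\operatorname{artanh}p.
\]
The function $g$ is affine in $c\in[-1,1]$, so it suffices to check the endpoints $c=\pm1$. There
\[
g(p,\pm1)=\frac1{1\mp p}\mp\operatorname{artanh}p .
\]
Take $c=1$ (the case $c=-1$ is symmetric under $p\mapsto-p$) and set $u=1/(1-p)>1/2$. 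The inequality $g(p,1)>0$ becomes $\frac12\log\frac{1+p}{1-p}<u$. This follows from $\frac12\log(1+p)+\frac12\log u\le\frac12\log(2u)\le u-\frac12<u$. Hence $g>0$, so $L_v'<0$ strictly, using the strict positivity of the integrand over a full turn $\varphi\in[0,2\pi]$. The delicate points are the sign and orientation conventions in Step 2 and the correct identification of the exact derivative; I would verify both on the round circle, where $p\equiv0$ gives $L_v'=-2\pi$, matching $\mathcal A_1'=-2\pi$.

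\emph{Step 4.} The quantity $D(t)=\min_{|v|=1}L_v(t)$ has its minimum attained at $v=m/|m|$, or at any $v$ if $m=0$. For $t_1<t_2$, choose $v$ attaining the minimum at $t_2$. Then $D(t_2)=L_v(t_2)<L_v(t_1)\le\sup_{|v|=1}L_v(t_1)$, which is not yet the right comparison. The correct way is to take $v$ optimal at $t_1$ instead:
\[
D(t_2)\le L_v(t_2)<L_v(t_1)=D(t_1).
\]
So $D$ is strictly decreasing.
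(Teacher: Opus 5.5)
Your Steps 1 and 2 are correct once the conventions are fixed so that $d\psi=+d\varphi$ (for $a$ the counterclockwise rotation of $v$, $\psi$ is the angle from $-a$ to $\tau$). Step 4, in its corrected form, is exactly the paper's comparison. The gap is in Step 3: there is a sign error, and it is precisely what makes the integrand look pointwise positive. With $d\psi=d\varphi$,
\[
\frac{d}{d\varphi}\bigl(-\sin\psi\,\operatorname{artanh}p\bigr)
=-\cos\psi\,\operatorname{artanh}p-\frac{p_\varphi\sin\psi}{1-p^2}.
\]
So the zeroth-order remainder is $+\cos\psi\,\operatorname{artanh}p$, and the correct integrand is
\[
g(p,c)=\frac{1+pc}{1-p^2}+c\operatorname{artanh}p ,
\]
which is the paper's $-\Phi_{-c}(p)$. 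Reversing the orientation does not rescue your sign, because it also flips the $p_{\hat s}\sin\psi$ term in Step 2.

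This $g$ is not positive. Since $\partial_p g=2(p+c)/(1-p^2)^2$, for $|c|<1$ one has $\min_p g(\cdot,c)=1-c\operatorname{artanh}c$. That value is negative for $|c|$ near $1$ (e.g. $c=0.9$), and $g(p,1)=\frac1{1-p}+\operatorname{artanh}p\to-\infty$ as $p\to-1$. Your version of $g$ would even satisfy $g\geq1$, giving the uniform bound $L_v'\leq-2\pi$, which is false. On a round projection with $p=-\epsilon\cos\psi$, the linear term $2pc$ of $g$ gives $L_v'=-2\pi+2\pi\epsilon+O(\epsilon^2)$. The check you proposed with $p\equiv0$ cannot detect this, because every $c$-dependent term vanishes there.

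The missing idea is that $L_v'<0$ holds only after integrating over the full turn, not pointwise. First minimize in $p$ for each fixed $\psi$:
\[
g(p,\cos\psi)\geq1-\cos\psi\,\operatorname{artanh}(\cos\psi).
\]
Then use the exact identity
\[
\int_0^{2\pi}\cos\psi\,\operatorname{artanh}(\cos\psi)\,d\psi=2\pi,
\]
obtained by integrating by parts on each open half-circle with $(\operatorname{artanh}\cos\psi)_\psi=-1/\sin\psi$. This gives $\oint g\,d\varphi\geq0$. Equality would force $p=-\cos\psi$, hence $|p|=1$ at $\cos\psi=\pm1$, contradicting spacelikeness. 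This is the paper's argument; in its outer-normal-angle convention the extremal slope is $p=\cos\theta$. Note that it yields only $L_v'<0$, with no uniform negative bound. That is consistent with near-extremal configurations $p\approx-\cos\psi$, for which $L_v'$ is close to $0$.
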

\begin{proof}
Parameterize $\Gamma$ by $\theta$, the outer normal angle of the $xy$-projection curve $X$. Let
\[
X_\theta=\rho\mathbf t,\quad z_\theta=\rho p,\quad
\mathbf t=(-\sin\theta,\cos\theta),\quad
\mathbf n=(-\cos\theta,-\sin\theta),\quad q=1-p^2.
\]
Then $\rho>0$ is the radius of curvature of the projection, and $p=dz/d\hat s$ is the slope with respect to projected arclength. Strict spacelikeness gives $q>0$.

We first compute the curvature vector and the tangential correction that keeps $\theta$ fixed. Since $\Gamma_\theta=\rho(\mathbf t,p)$,
\[
\langle\Gamma_\theta,\Gamma_\theta\rangle=\rho^2q,\qquad
ds=\rho\sqrt q\,d\theta,\qquad
\partial_s=\frac1{\rho\sqrt q}\partial_\theta,\qquad
\Gamma_s=\frac{(\mathbf t,p)}{\sqrt q}.
\]
Using $q_\theta=-2pp_\theta$, we obtain
\[
\begin{aligned}
\Gamma_{ss}
&=\frac1{\rho\sqrt q}\partial_\theta
  \left(\frac{(\mathbf t,p)}{\sqrt q}\right)\\
&=\frac{(\mathbf n,p_\theta)}{\rho q}
  -\frac{q_\theta}{2\rho q^2}(\mathbf t,p)\\
&=\frac{(\mathbf n,p_\theta)}{\rho q}
  +\frac{pp_\theta}{\rho q^2}(\mathbf t,p).
\end{aligned}
\]
After a time-dependent reparametrization, the geometric CSF can be written as $\Gamma_t=\Gamma_{ss}+\xi(\mathbf t,p)$, where the added vector is tangent to the curve. Put
\[
V=\frac1{\rho q},\qquad
U=\frac{pp_\theta}{\rho q^2}+\xi.
\]
Then $X_t=U\mathbf t+V\mathbf n$ and $z_t=Vp_\theta+pU$. In the fixed-angle parametrization, $\mathbf t$ is independent of time. Commuting the coordinate derivatives gives
\[
(X_\theta)_t=\rho_t\mathbf t,
\qquad
(X_t)_\theta=(U_\theta-V)\mathbf t+(U+V_\theta)\mathbf n.
\]
Comparison of the normal components yields $U=-V_\theta$, equivalently $\xi=-V_\theta-pp_\theta/(\rho q^2)$. Substitution therefore gives
\begin{equation}\label{bivector-angle-gauge}
X_t=-V_\theta\mathbf t+V\mathbf n,\qquad
z_t=Vp_\theta-pV_\theta,\qquad V=\frac1{\rho q}.
\end{equation}
All time derivatives in the following calculations are taken at fixed $\theta$. The defining line integrals for $\mathcal{A}_1$ and $m$ are unchanged by this orientation-preserving reparametrization.

For the projected area $\mathcal A_1$, differentiate its integral over the fixed interval $[0,2\pi]$:
\[
\mathcal A_1'=\frac12\int_0^{2\pi}
\left(\det(X_t,X_\theta)+\det(X,X_{\theta t})\right)\,d\theta.
\]
Periodicity eliminates the boundary term in integration by parts, so
\[
\begin{aligned}
\int_0^{2\pi}\det(X,X_{\theta t})\,d\theta
&=\left[\det(X,X_t)\right]_0^{2\pi}
  -\int_0^{2\pi}\det(X_\theta,X_t)\,d\theta\\
&=\int_0^{2\pi}\det(X_t,X_\theta)\,d\theta.
\end{aligned}
\]
Consequently,
\[
\begin{aligned}
\mathcal A_1'&=\int_0^{2\pi}\det(X_t,X_\theta)\,d\theta\\
&=\int_0^{2\pi}\det(-V_\theta\mathbf t+V\mathbf n,\rho\mathbf t)\,d\theta
 =-\int_0^{2\pi}\rho V\,d\theta.
\end{aligned}
\]
Since $\rho V=1/q=1/(1-p^2)$, this proves
\begin{equation}\label{bivector-area-evolution}
\mathcal A_1'=-\int_0^{2\pi}\frac{d\theta}{1-p^2}.
\end{equation}

For the mixed component $m=\int_0^{2\pi}Xz_\theta\,d\theta$, differentiation and integration by parts give
\[
\begin{aligned}
m'
&=\int_0^{2\pi}\left(X_tz_\theta+X(z_t)_\theta\right)\,d\theta\\
&=\left[Xz_t\right]_0^{2\pi}
  +\int_0^{2\pi}\left(X_tz_\theta-X_\theta z_t\right)\,d\theta\\
&=\int_0^{2\pi}\left(X_tz_\theta-X_\theta z_t\right)\,d\theta.
\end{aligned}
\]
Substituting~\eqref{bivector-angle-gauge} exhibits the cancellation of the tangential terms:
\[
\begin{aligned}
X_tz_\theta-X_\theta z_t
&=(-V_\theta\mathbf t+V\mathbf n)\rho p
  -\rho\mathbf t(Vp_\theta-pV_\theta)\\
&=-\rho pV_\theta\mathbf t+\rho pV\mathbf n
  -\rho Vp_\theta\mathbf t+\rho pV_\theta\mathbf t\\
&=\rho V\left(p\mathbf n-p_\theta\mathbf t\right).
\end{aligned}
\]
Thus
\[
m'=\int_0^{2\pi}\frac{p}{1-p^2}\mathbf n\,d\theta
   -\int_0^{2\pi}\frac{p_\theta}{1-p^2}\mathbf t\,d\theta.
\]
Because $|p|<1$, the function $\operatorname{arctanh}p$ is smooth and periodic, and
\[
\partial_\theta(\operatorname{arctanh}p)
=\frac{p_\theta}{1-p^2}.
\]
Integration by parts gives
\[
\begin{aligned}
-\int_0^{2\pi}\frac{p_\theta}{1-p^2}\mathbf t\,d\theta
&=-\left[(\operatorname{arctanh}p)\mathbf t\right]_0^{2\pi}
  +\int_0^{2\pi}(\operatorname{arctanh}p)\mathbf t_\theta\,d\theta\\
&=\int_0^{2\pi}(\operatorname{arctanh}p)\mathbf n\,d\theta.
\end{aligned}
\]
Combining the preceding two identities proves
\begin{equation}\label{bivector-mixed-evolution}
m'=\int_0^{2\pi}\left(\frac{p}{1-p^2}+\operatorname{arctanh}p\right)
\mathbf n\,d\theta.
\end{equation}

Rotate the planar coordinates so that $v=e_1$, then
\[
m'\cdot v=\int_0^{2\pi}\left(\frac{p}{1-p^2}+\operatorname{arctanh}p\right)
(-\mathbf \cos \theta)\,d\theta
\]
and
\[
L_v' = \int_0^{2\pi}\left(\left(\frac{p}{1-p^2}+\operatorname{arctanh}p\right)
\mathbf \cos \theta-\frac1{1-p^2}\right)\,d\theta
\]

For $c\in(-1,1)$, the function
\[
\Phi_c(p)=c\left(\frac{p}{1-p^2}+\operatorname{arctanh}p\right)
-\frac1{1-p^2}
\]
satisfies $\Phi_c'(p)=2(c-p)/(1-p^2)^2$. Its unique maximum is attained at $p=c$, where it equals $-1+c\operatorname{arctanh}c$. Letting $c = \cos \theta$ yields
\[
L_v'\leq\int_0^{2\pi}
[-1+\cos\theta\,\operatorname{arctanh}(\cos\theta)]\,d\theta=0.
\]
The logarithmic singularities are integrable. The integral vanishes by integration by parts, using
$(\operatorname{arctanh}(\cos\theta))_\theta=-1/\sin\theta$ on the open half-circles and $\sin\theta\,\operatorname{arctanh}(\cos\theta)\to0$ at their endpoints. Equality would force $p(\theta)=\cos\theta$ away from the endpoints, contrary to the strict spacelike condition at $\theta=0,\pi$. Thus $L_v'<0$.

For $t_2>t_1$, choose $v$ minimizing $L_v(t_1)$. Then
$D(t_2)\leq L_v(t_2)<L_v(t_1)=D(t_1)$. This avoids differentiating $|m|$ at its zeros.
\end{proof}

\begin{theorem}[Bivector obstruction]\label{bivector-obstruction}
Let a smooth closed spacelike initial curve in $\mathbb R^{2,1}$ have a one-to-one strictly convex projection. If
\[
d_0:=\mathcal A_1(0)-|m(0)|\leq 0,
\]
then it cannot shrink to a point and has a null-degenerate singularity. Equivalently,
\[
\liminf_{t\uparrow T}\min_{S^1}(1-p^2)=0.
\]
\end{theorem}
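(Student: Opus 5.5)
Argue by contradiction against Theorem~\ref{main theorem 2} with $n=0$, $k=1$. By Theorem~\ref{convexity keeping} the $xy$-projection stays embedded and strictly convex for $t>0$, so Lemma~\ref{bivector-monotonicity} applies on $[0,T)$. Suppose the flow has no null-degenerate singularity, i.e.\ $|\Gamma_s|_E$ stays uniformly bounded. Since $|\Gamma_s|_E^2=(1+p^2)/(1-p^2)$ in the projected-arclength description, this is the same as $\min_{S^1}(1-p^2)$ being bounded below by a positive constant on $[0,T)$. Hence the two formulations in the statement are equivalent, given that alternative~(1) of Theorem~\ref{main theorem 2} is exactly unboundedness of $|\Gamma_s|_E$.

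Under this bound, alternative~(2) of Theorem~\ref{main theorem 2} holds. The curve shrinks to a point $a$, and $\gamma=[2(T-t)]^{-1/2}(\Gamma-a)$ converges smoothly to a centered unit circle $C_\infty$ in a fixed spacelike plane. Both $\mathcal A_1$ and $m$ are translation invariant and quadratic under scaling, so
\[
D(t)=2(T-t)\bigl(\mathcal A_1(\gamma(\sigma))-|m(\gamma(\sigma))|\bigr).
\]
By smooth convergence, $\mathcal A_1(\gamma(\sigma))-|m(\gamma(\sigma))|\to \mathcal A_1(C_\infty)-|m(C_\infty)|$. For a unit circle in a spacelike plane $\Pi$ the bivector is $\pi\,e\wedge f$ with $e,f$ an orthonormal spacelike basis of $\Pi$. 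Hence $\langle\mathcal A,\mathcal A\rangle=\pi^2>0$, i.e.\ $\mathcal A_1^2>|m|^2$. Since $\mathcal A_1>0$ by the orientation convention, $\mathcal A_1(C_\infty)-|m(C_\infty)|=:\delta>0$. Therefore $D(t)>0$ for all $t$ sufficiently close to $T$.

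On the other hand, Lemma~\ref{bivector-monotonicity} says $D$ is strictly decreasing, so $D(t)<D(0)=d_0\le0$ for every $t\in(0,T)$. This contradicts the previous paragraph. We conclude that the tangent bound fails, and alternative~(1) of Theorem~\ref{main theorem 2} holds. The same contradiction excludes point collapse with round asymptotics. To exclude point collapse in general, one notes that under the tangent bound every collapse is round by Theorem~\ref{main theorem 2}, and without the bound we are already in the null-degenerate case.

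The main point needing care is the positivity $\mathcal A_1(C_\infty)>|m(C_\infty)|$. I would verify it directly. Write $C_\infty=\{\cos\phi\, e+\sin\phi\, f\}$ with $e=(e_X,e_z)$ and $f=(f_X,f_z)$ spacelike orthonormal in $\mathbb R^{2,1}$. Then $\mathcal A=\pi\, e\wedge f$ and $\langle \mathcal A,\mathcal A\rangle=\pi^2(\langle e,e\rangle\langle f,f\rangle-\langle e,f\rangle^2)=\pi^2$. Also, $\langle\mathcal A,\mathcal A\rangle=\mathcal A_1^2-|m|^2$ was noted after \eqref{bivector-definition}. A secondary point is the continuity of $\mathcal A$ under the smooth (even $C^1$) convergence modulo reparametrization. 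This holds because $\mathcal A$ is a parametrization-invariant line integral of $\Gamma\wedge d\Gamma$ for orientation-preserving reparametrizations. The orientation is fixed because the projection stays counterclockwise.
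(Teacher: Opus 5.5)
Your argument for the null-degenerate conclusion is correct, and it takes a different route from the paper. You assume the tangent bound and invoke the full round-point alternative of Theorem~\ref{main theorem 2}. Scaling together with $\langle\mathcal A,\mathcal A\rangle=\pi^2$ on a spacelike unit circle then gives $D(t)=2(T-t)\bigl(\mathcal A_1-|m|\bigr)(\gamma(\sigma))>0$ near $T$, which contradicts $D(t)<d_0\le 0$. The orientation of $\varphi_\sigma$ is harmless: since $\mathcal A_1(\gamma(\sigma))>0$, the limit is $\sqrt{\pi^2+|m(C_\infty)|^2}-|m(C_\infty)|>0$ in either case. Your equivalence with $\liminf\min(1-p^2)=0$ via $|\Gamma_s|_E^2=(1+p^2)/(1-p^2)$ is also fine.

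The gap is the first assertion of the statement, that the curve \emph{cannot shrink to a point}. Your sentence ``without the bound we are already in the null-degenerate case'' does not exclude collapse. Theorem~\ref{main theorem 2} only says that alternative (1) excludes alternative (2), and (2) is point collapse \emph{together with} round asymptotics. It says nothing about whether a flow in alternative (1) shrinks to a point, so your argument only rules out collapse with bounded tangents.

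The paper handles collapse directly, with no tangent bound. If $\Gamma(t)$ shrinks to a point, then $\mathcal A_1\to0$ by convexity of the projection. Spacelikeness gives $|dz|\le d\hat s$, so for any point $X_0$ of the projected curve
\[
|m|=\Bigl|\oint (X-X_0)\,dz\Bigr|\le \operatorname{diam}(X)\,\hat L\le\pi\operatorname{diam}(X)^2\to0 .
\]
Hence $D(t)\to0$. This contradicts strict decrease, which gives $\lim D\le D(t_1)<d_0\le0$. This one estimate yields both conclusions at once: there is no point collapse, hence alternative (2) fails, hence alternative (1) holds. It also uses only the dichotomy, not the full smooth round-point convergence your route needs. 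You should add it, or an equivalent argument, to complete the proof.
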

\begin{proof}
Using Theorem \ref{main theorem 2}, it suffices to prove that the curve cannot shrink to a point. Assume that the curve shrinks to a point $p$ as $t\to T$. Then, by the convexity of the projection, $\mathcal{A}_1\to 0$. Moreover,
\[
|m|= \left|\oint (X-p)\,dz\right|
\le \operatorname{diam}(X)\oint |dz|
\le \operatorname{diam}(X)\,\widehat L
\le \pi\,\operatorname{diam}(X)^2.
\]
We conclude
\[
\lim_{t\to T} ( \mathcal{A}_1(t) -|m(t)| ) = 0,
\]
which contradicts $d_0\leq 0$ and Lemma \ref{bivector-monotonicity}.
\end{proof}

\begin{example}[An explicit one-to-one convex-projection example]\label{explicit-bivector-example}
Set
\begin{equation}\label{explicit-slope}
\begin{gathered}
p_0(\theta)=\frac9{10}\frac{\tanh(20\cos\theta)}{\tanh20},\qquad
z_0(\theta)=\int_0^\theta p_0(u)\,du,\\
\Gamma_0(\theta)=(\cos\theta,\sin\theta,z_0(\theta)).
\end{gathered}
\end{equation}
Since $p_0(\theta+\pi)=-p_0(\theta)$, its mean is zero and $z_0$ is smooth and $2\pi$-periodic. The unit-circle projection makes $\Gamma_0$ embedded, and
\[
\langle(\Gamma_0)_\theta,(\Gamma_0)_\theta\rangle
=1-p_0^2\geq1-(9/10)^2=\frac{19}{100}.
\]
In particular the initial data are uniformly spacelike, not merely close to a singular immersion.

Here $\mathcal{A}_1(0)=\pi$ and symmetry gives $m_2(0)=0$, whereas
\[
m_1(0)=\frac9{10}I_{20},\qquad
I_{20}=\frac1{\tanh20}\int_0^{2\pi}
\cos\theta\,\tanh(20\cos\theta)\,d\theta.
\]
Using $\tanh x\geq1-2e^{-2x}$ for $x\geq0$,
\[
I_{20}\geq\frac4{\tanh20}(1-2J),\qquad
J=\int_0^{\pi/2}\cos\theta\,e^{-40\cos\theta}\,d\theta.
\]
Put $u=\pi/2-\theta$ and use $2u/\pi\leq\sin u\leq u$ on $[0,\pi/2]$. Then
\[
J\leq\int_0^\infty u e^{-80u/\pi}\,du=\frac{\pi^2}{6400},
\qquad
I_{20}>4-\frac{\pi^2}{800}>\frac{319}{80}.
\]
Since $\pi<22/7$, this proves
\begin{equation}\label{explicit-defect-bound}
|m(0)|-\mathcal{A}_1(0)>
\frac{2871}{800}-\frac{22}{7}
=\frac{2497}{5600}>0.
\end{equation}
\end{example}

\begin{corollary}\label{both-alternatives-realized}
For every $n\geq0$ and $k\geq1$, both alternatives of Theorem~\ref{main theorem 2} occur in $\mathbb R^{n+2,k}$.
\end{corollary}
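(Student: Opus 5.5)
We must exhibit, for every $n\geq0$ and $k\geq1$, one maximal spacelike CSF in $\mathbb R^{n+2,k}$ satisfying the hypotheses of Theorem~\ref{main theorem 2} that ends in alternative (2), and one that ends in alternative (1). Both will come from curves lying in a copy of $\mathbb R^2$ or $\mathbb R^{2,1}$ inside $\mathbb R^{n+2,k}$.

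For the round-point alternative, take the unit circle in the $xy$-plane. It is closed, strong spacelike, of index one, and its $xy$-projection is a one-to-one convex parametrization. Its CSF is the explicit shrinking circle of Example~\ref{example circle}. That flow stays in the $xy$-plane, so by uniqueness it is the CSF in $\mathbb R^{n+2,k}$. The flow has $|\Gamma_s|_E\equiv1$, so alternative (1) fails and (2) holds. Alternatively one may invoke Theorem~\ref{not becoming null} directly after embedding $\mathbb R^{2,1}\subset\mathbb R^{2,k}\subset\mathbb R^{n+2,k}$.

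For the null-degenerate alternative, take the curve $\Gamma_0$ of Example~\ref{explicit-bivector-example} in $\mathbb R^{2,1}$. It is smooth, closed and uniformly spacelike, and its projection is the unit circle, so it is one-to-one and strictly convex. By \eqref{explicit-defect-bound} we have $d_0<0$, so Theorem~\ref{bivector-obstruction} gives a null-degenerate singularity for its flow in $\mathbb R^{2,1}$. It remains to transplant this to $\mathbb R^{n+2,k}$. Embed $\mathbb R^{2,1}$ isometrically as the coordinate subspace spanned by $x$, $y$ and the first timelike coordinate $w_1$; this is possible because $k\geq1$. The embedded curve has all other coordinates identically zero.

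The main point to check is that the CSF in $\mathbb R^{n+2,k}$ starting from the embedded curve stays in this subspace. Each remaining coordinate $z_i$ or $w_j$ satisfies the linear equation $\partial_t f=\partial_s^2 f$ along the flow, in the fixed-parameter parabolic gauge. With zero initial data, uniqueness forces $f\equiv0$. Equivalently, the $\mathbb R^{2,1}$-flow composed with the inclusion is already a solution, and uniqueness of smooth spacelike CSF identifies the two. Hence the maximal times agree. The Euclidean norm $|\Gamma_s|_E$ is the same in both ambient spaces, since the inclusion is a coordinate inclusion and $\Gamma_s$ has no components outside the subspace. Therefore $\limsup_{t\uparrow T}\max|\Gamma_s|_E=\infty$ also holds in $\mathbb R^{n+2,k}$, and the $xy$-projection hypothesis is unchanged.

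Since both flows satisfy the hypotheses of Theorem~\ref{main theorem 2}, each realizes exactly one alternative, and they realize different ones. This holds for every $n\geq0$ and $k\geq1$.
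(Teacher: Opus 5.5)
Your proposal is correct and is essentially the paper's argument. The shrinking round circle in a spacelike coordinate plane gives the round-point alternative, and the curve of Example~\ref{explicit-bivector-example}, included into $\mathbb R^{n+2,k}$ with all extra coordinates zero (preserved by the fixed-parameter parabolic equation and uniqueness), gives null degeneration through Theorem~\ref{bivector-obstruction}; you only spell out details the paper leaves implicit, such as $|\Gamma_s|_E\equiv1$ for the circle and the agreement of maximal times and of $|\Gamma_s|_E$ under the inclusion.
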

\begin{proof}
A round circle in a spacelike two-plane realizes the round-point alternative. Include the curve \eqref{explicit-slope} into $\mathbb R^{n+2,k}$ by setting all additional coordinates to zero. The fixed-parameter parabolic equation and uniqueness keep those coordinates zero, so its flow remains the same $\mathbb R^{2,1}$ example and realizes null degeneration.
\end{proof}
\appendix

\section{Parabolic interior estimates}\label{app:interior-estimates}

\begin{theorem}\label{Nash and Schauder estimate}
Let $U=(y_1,\ldots,y_N,z_1,\ldots,z_K)$ be a smooth solution on
$(-4r,4r)\times(0,S)$, where $S\leq\infty$, of
\begin{equation}\label{graph-system}
U_t=a(U_x)U_{xx}-\delta xU_x+\delta U,\qquad
a(U_x)=\frac1{1+\sum_i(y_i)_x^2-\sum_j(z_j)_x^2},
\end{equation}
with $\delta\in\{0,1\}$. Assume $|U|+|U_x|\leq M$ and
$0<\lambda\leq a(U_x)\leq\Lambda$. For every $\tau>0$ and every integer $m\geq0$, all spatial derivatives $D_x^mU$ and their time derivatives are uniformly bounded on $[-r,r]\times[\tau,S)$. The constants depend on $m,r,\tau,M,\lambda,\Lambda,N,K$, but not on the upper endpoint $S$.
\end{theorem}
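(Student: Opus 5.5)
The plan is to treat \eqref{graph-system} as a quasilinear parabolic system that is uniformly parabolic in nondivergence form, with a \emph{scalar} leading coefficient. The scalar coefficient is the key structural feature: every component shares the same principal part $a(U_x)\partial_x^2$. First we differentiate in $x$. Setting $W=U_x$ gives
\[
W_t=\bigl(a(W)W_x\bigr)_x-\delta xW_x,
\]
because the terms $-\delta W$ from $-\delta xU_x$ and $+\delta W$ from $\delta U$ cancel. Each component $W^\alpha$ therefore satisfies a divergence-form equation $W^\alpha_t=(a\,W^\alpha_x)_x+b\,W^\alpha_x$. Here $a=a(W(x,t))$ is a bounded measurable coefficient with $\lambda\le a\le\Lambda$, and $|b|\le 4r\delta$. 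These bounds depend only on the hypotheses and not on any regularity of $a$.

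Second, we apply the Nash--Moser--De Giorgi Hölder estimate to each scalar component separately, on parabolic cylinders $Q_\rho(x_0,t_0)=(x_0-\rho,x_0+\rho)\times(t_0-\rho^2,t_0]$ with $\rho=\min\{r,\sqrt{\tau}\}$ and $t_0\ge\tau$. Since $|W|\le M$, this yields $[U_x]_{C^{\alpha,\alpha/2}}\le C(M,\lambda,\Lambda,r,\tau)$ on $[-2r,2r]\times[\tau/2,S)$. The constant depends only on local data, so the estimate is translation invariant in time and hence independent of $S$. This is the step I expect to be the main obstacle. Nash's theorem is scalar, and it applies here only because the system decouples at the level of the principal part: the coefficient $a$ is merely bounded once it is regarded as a given function of $(x,t)$. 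One must therefore check that the first-order term and the unbounded time interval cause no trouble. Both are harmless, because all the constants are local.

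Third, once $U_x\in C^\alpha$, the coefficient $a(U_x)$ is Hölder continuous, with its Hölder norm controlled by $M$, $\lambda$ and $\Lambda$. Indeed $a$ is a smooth function of $U_x$ on the set where $1/a\in[\Lambda^{-1},\lambda^{-1}]$. The original equation for $U$ is then linear with $C^\alpha$ coefficients, so the interior Schauder estimate gives $U\in C^{2+\alpha,1+\alpha/2}$ on a slightly smaller cylinder. Bootstrapping, we differentiate the equation $m$ times in $x$. The highest-order part is always $a\,\partial_x^{m+2}U$, and the lower-order terms are polynomials in derivatives already controlled, multiplied by derivatives of $a$. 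Repeated Schauder estimates on nested cylinders, with radii shrinking from $2r$ to $r$ and initial times increasing from $\tau/2$ to $\tau$, then bound every $D_x^mU$.

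Time derivatives are handled by substituting the equation: $U_t$ and the mixed derivatives $\partial_t^j D_x^m U$ are expressed polynomially through spatial derivatives and factors of $x$, the latter bounded by $4r$. All constants depend only on $m,r,\tau,M,\lambda,\Lambda,N,K$, because each estimate is applied on cylinders of fixed size inside the region. This independence from $S$ is exactly what is needed for the $\sigma\to\infty$ applications.
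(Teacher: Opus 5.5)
Your proposal is correct and follows the paper's proof essentially step for step. You differentiate to get the divergence-form equation for $U_x$ with the zeroth-order terms cancelled, apply De Giorgi--Nash--Moser componentwise on fixed-size, time-translated cylinders (which gives independence from $S$), use Schauder for $U$ and then an inductive Schauder bootstrap on $D_x^mU$, and read off time derivatives from the equation. The only thing to tidy is the cylinder size: to obtain the H\"older bound on $[\tau/2,S)$ as you later use it, you need $t_0\ge\tau/2$ and $\rho^2<\tau/2$. Your stated choice $t_0\ge\tau$ does not cover $[\tau/2,\tau)$, and $\rho=\sqrt{\tau}$ would reach negative times there. The paper's choice $R_*=\min\{r,\sqrt\tau/2\}$ avoids this.
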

\begin{proof}
We keep the differentiated equations explicit, both to track the normalization terms and to explain why the bounds do not depend on the terminal time.

\smallskip
\noindent\emph{1. The equation for the first derivatives.}
Set $V=U_x$. For a vector $V=(V^1,\ldots,V^{N+K})$, write
\[
q(V)=1+\sum_{\alpha=1}^{N+K}\epsilon_\alpha(V^\alpha)^2,
\qquad a(V)=q(V)^{-1},
\]
where $\epsilon_\alpha=1$ for the $y$ components and $\epsilon_\alpha=-1$ for the $z$ components. The assumptions imply
\[
|V|\leq M,\qquad \Lambda^{-1}\leq q(V)\leq\lambda^{-1}.
\]
Thus $a$ and all its derivatives with respect to $V$ are bounded on the range under consideration. In particular,
\begin{align*}
a_{V^\alpha}&=-2\epsilon_\alpha V^\alpha q^{-2},\\
a_{V^\alpha V^\beta}
&=-2\epsilon_\alpha\delta_{\alpha\beta}q^{-2}
+8\epsilon_\alpha\epsilon_\beta V^\alpha V^\beta q^{-3}.
\end{align*}
Here $\delta_{\alpha\beta}$ is the Kronecker symbol, not the normalization parameter $\delta$.
Differentiating \eqref{graph-system} once in $x$ gives
\begin{align*}
V_t
&=\partial_x(a(V)U_{xx})
-\delta\partial_x(xU_x)+\delta U_x\\
&=\partial_x(a(V)V_x)-\delta(V+xV_x)+\delta V\\
&=(a(V)V_x)_x-\delta xV_x.
\end{align*}
The zeroth-order terms cancel and the drift has a minus sign. Each component $V^\alpha$ satisfies the scalar divergence-form equation
\begin{equation}\label{first-derivative-divergence-equation}
(V^\alpha)_t-(a(V)(V^\alpha)_x)_x
+\delta x(V^\alpha)_x=0.
\end{equation}
Although the components determine the common coefficient $a(V)$, the scalar interior estimate uses only its ellipticity bounds and the bound for the drift; it does not require derivatives of $a$.

\smallskip
\noindent\emph{2. Uniform cylinders and the first Schauder estimate.}
Fix $(x_0,t_0)\in[-r,r]\times[\tau,S)$, and let
\[
R_*=\min\{r,\sqrt{\tau}/2\},\qquad
Q_\rho=(x_0-\rho,x_0+\rho)\times(t_0-\rho^2,t_0]
\quad(0<\rho\leq R_*).
\]
The cylinder $Q_{R_*}$ lies strictly inside the spatial domain and has
$t_0-R_*^2\geq3\tau/4>0$. Its size is independent of $t_0$ and $S$.

Interior De Giorgi--Nash--Moser estimates applied componentwise to
\eqref{first-derivative-divergence-equation} yield some $0<\alpha<1$ and a constant $C$ such that, for every component index $\beta$,
\[
\|V^\beta\|_{C^{\alpha,\alpha/2}(Q_{3R_*/4})}
\leq C\|V^\beta\|_{L^\infty(Q_{R_*})}\leq CM.
\]
The constants depend on the stated data, not on $t_0$ or $S$.

To see explicitly that the leading coefficient is Holder continuous, use
\[
|a(V(P))-a(V(Q))|
=\frac{|q(V(P))-q(V(Q))|}{q(V(P))q(V(Q))}
\leq C|V(P)-V(Q)|
\]
for any two space-time points $P,Q$ in this cylinder. Therefore
$\|a(V)\|_{C^{\alpha,\alpha/2}}\leq C$.
Now write each component of the original equation as
\[
(U^\beta)_t-a(V)(U^\beta)_{xx}
+\delta x(U^\beta)_x-\delta U^\beta=0.
\]
Interior Schauder estimates, with the drift and zeroth-order terms included as coefficients, give
\[
\|U^\beta\|_{C^{2+\alpha,1+\alpha/2}(Q_{R_*/2})}
\leq C\|U^\beta\|_{C^0(Q_{3R_*/4})}\leq CM.
\]
In particular $U_{xx}$ and $U_t$ are bounded and parabolically Holder continuous. This argument does not assume a time-Holder bound for $U$ before applying the estimate.

\smallskip
\noindent\emph{3. Higher spatial derivatives and the normalization terms.}
For $m\geq1$, set $W_m=D_x^mU$. The product rule gives
\[
D_x^m(xU_x)=xD_x^{m+1}U+mD_x^mU,
\qquad
D_x^m(U-xU_x)=(1-m)W_m-x(W_m)_x.
\]
Differentiating the leading term by Leibniz' rule, we obtain the exact equation
\begin{align*}
(W_m)_t
&=\sum_{j=0}^m\binom mj(D_x^ja)D_x^{m-j+2}U
-\delta x(W_m)_x+\delta(1-m)W_m\\
&=a(W_m)_{xx}+(m a_x-\delta x)(W_m)_x
+\delta(1-m)W_m+R_m,
\end{align*}
where
\begin{equation}\label{higher-derivative-remainder}
R_1=0,\qquad
R_m=\sum_{j=2}^m\binom mj(D_x^ja)D_x^{m-j+2}U
\quad(m\geq2).
\end{equation}
For example,
\begin{align*}
(W_1)_t&=a(W_1)_{xx}+(a_x-\delta x)(W_1)_x,\\
(W_2)_t&=a(W_2)_{xx}+(2a_x-\delta x)(W_2)_x
+(a_{xx}-\delta)W_2.
\end{align*}
The coefficient derivatives are also explicit:
\begin{align*}
a_x&=\sum_\beta a_{V^\beta}(U^\beta)_{xx},\\
a_{xx}
&=\sum_\beta a_{V^\beta}(U^\beta)_{xxx}
+\sum_{\beta,\gamma}a_{V^\beta V^\gamma}
(U^\beta)_{xx}(U^\gamma)_{xx}.
\end{align*}
More generally, $D_x^ja(V)$ is a finite sum of products of derivatives of $a$ with respect to $V$ and derivatives $D_x^hU$ of order at most $j+1$.

The estimate in part~2 makes $a_x$ Holder continuous. Apply Schauder estimates to the equation for $W_1$ on a smaller cylinder to bound
$W_1$ in $C^{2+\alpha,1+\alpha/2}$, obtaining control of $U_{xxx}$.
Inductively, suppose derivatives of $U$ through order $m+1$ are bounded and parabolically Holder continuous on the current cylinder.
Then $D_x^ja$ for $j\leq m$ and every term of $R_m$ are Holder bounded: in \eqref{higher-derivative-remainder}, the derivative of $U$ outside the coefficient has order $m-j+2\leq m$.
The scalar Schauder estimate for each component of $W_m$ therefore gives a
$C^{2+\alpha,1+\alpha/2}$ bound on a smaller cylinder, controlling the next derivative of $U$.

For each fixed target order, only finitely many nested cylinders are required. Choose their radii between $R_*/2$ and $R_*/4$ after the first estimate. This provides bounds at every $(x_0,t_0)$ above, with constants depending on the order and $R_*$ but not on $t_0$ or $S$.

\smallskip
\noindent\emph{4. Time derivatives and the initial-time restriction.}
Once all needed spatial derivatives have been bounded, the original equation gives $U_t$. The equation for $W_m$ similarly gives
$\partial_tD_x^mU$. If further time derivatives are required, differentiate again; for example,
\[
U_{tt}
=\bigl(D_Va(U_x)\cdot U_{xt}\bigr)U_{xx}
+a(U_x)(U_t)_{xx}-\delta x(U_t)_x+\delta U_t.
\]
Each quantity on the right is controlled by the spatial estimates and the first time-derivative equations. Repeating this argument controls arbitrary fixed mixed derivatives. As $(x_0,t_0)$ was arbitrary, the asserted uniform bounds hold on
$[-r,r]\times[\tau,S)$.

The positive time length of the backward cylinders is essential: bounds for $U$ and $U_x$ alone do not bound arbitrarily high initial derivatives. Estimates including $t=0$ must additionally depend on the corresponding smooth initial data.
\end{proof}

The preceding theorem supplies uniform regularity. The following homogeneous estimate additionally preserves a decay rate when applied on sliding time intervals.

\begin{lemma}[Rate-preserving interior estimate]\label{rate-preserving interior estimate}
Let $W:S^1\times[t_*,\infty)\to\mathbb R^N$ be a smooth solution of
\[
W_t=\alpha W_{\theta\theta}+B W_\theta+C W,
\]
where $\alpha$ is scalar, $0<\lambda\leq\alpha\leq\Lambda$, and $B,C$ are matrix-valued. Assume that these coefficients and all their spatial derivatives are uniformly bounded. For every integer $m\geq0$ there is a constant $C_m$, depending only on $m,N$, ellipticity, and finitely many coefficient bounds, such that
\begin{equation}\label{app:rate-preserving-estimate}
\|W(t)\|_{C^m(S^1)}
\leq C_m\left(\int_{t-1}^{t}\|W(s)\|_{L^2(S^1)}^2\,ds\right)^{1/2},
\qquad t\geq t_*+1.
\end{equation}
In particular, the constant is independent of $t$ and of the size of $W$. No symmetry of $B,C$ or bounds on time derivatives of the coefficients are needed.
\end{lemma}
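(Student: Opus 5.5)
The estimate will follow from a rescaled parabolic bootstrap on sliding unit time windows. Fix $t\geq t_*+1$ and shift time so that the window is $[t-1,t]$. The equation is linear and the coefficients are bounded independently of $t$, so constants computed on the normalized window $S^1\times[0,1]$ do not depend on $t$. Linearity also makes the estimate homogeneous in $W$, so only its form matters. The plan has three steps. First, derive an $L^\infty$ bound at time $t$ from the $L^2$ space-time norm. Second, upgrade to all $C^m$ norms. Third, check that no time derivative of the coefficients enters.

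\emph{Energy and derivative bounds.} Take a time cutoff $\chi(s)$ vanishing near $s=t-1$ and equal to $1$ on $[t-1/2,t]$. Multiply the equation by $\chi^2 W$ and integrate over $S^1$. Integrating $\alpha W_{\theta\theta}\cdot W$ by parts gives $-\alpha|W_\theta|^2-\alpha_\theta W_\theta\cdot W$. The term $BW_\theta\cdot W$ is absorbed into the good term by Young's inequality, using $\alpha\geq\lambda$ and bounds on $B$ and $\alpha_\theta$; it needs no symmetry, since we never use $\int BW_\theta\cdot W=-\tfrac12\int B_\theta W\cdot W$. Gronwall then gives
\[
\sup_{[t-1/2,t]}\|W\|_{L^2}^2+\int_{t-1/2}^t\|W_\theta\|_{L^2}^2\leq C\int_{t-1}^t\|W\|_{L^2}^2 .
\]
Next, differentiate the equation $k$ times in $\theta$. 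The function $W^{(k)}=\partial_\theta^kW$ solves an equation with the same leading term $\alpha\partial_\theta^2$. Its lower-order terms involve $\partial_\theta^jW$ for $j\leq k+1$, with coefficients made from spatial derivatives of $\alpha,B,C$, all bounded by hypothesis. Only spatial differentiation of the coefficients is used, which is why time-derivative bounds on $\alpha,B,C$ are not needed. The inductive energy step is to multiply by $\chi_k^2W^{(k)}$ with nested cutoffs supported in $[t-1+2^{-k-1},t]$. The troublesome term $\alpha_\theta\cdot(\ldots)W^{(k+1)}$, together with the terms carrying $W^{(k+1)}$, is again absorbed by Young's inequality. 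This bounds $\sup_s\|W^{(k)}(s)\|_{L^2}$ at time $t$ by $C_k\|W\|_{L^2(S^1\times[t-1,t])}$.

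\emph{Conclusion.} Sobolev embedding on $S^1$, $\|f\|_{C^m}\leq C\|f\|_{H^{m+1}}$, then gives \eqref{app:rate-preserving-estimate}. Because every step is linear in $W$, the constant is independent of the size of $W$.

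The main obstacle I expect is the induction step: one must verify that after $k$ differentiations the commutator terms contain at most $k+1$ derivatives of $W$, paired so that the top order can be absorbed. I would handle it by writing the differentiated equation in divergence-like form, $W^{(k)}_t=(\alpha W^{(k)}_\theta)_\theta+(\text{terms with }\leq k+1\text{ derivatives})$, and integrating the single top-order commutator $(k\alpha_\theta-\alpha_\theta)W^{(k+1)}$ against $W^{(k)}$. This is a standard parabolic absorption, but it requires the nested cutoffs to lose only a fixed fraction of the window at each stage.
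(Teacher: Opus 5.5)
Your proposal is correct and is essentially the paper's argument: an $L^2$ energy estimate for $\partial_\theta^k W$, Young absorption of the top-order term $((k-1)\alpha_\theta I_N+B)\partial_\theta^{k+1}W$ without any symmetry of $B$, time cutoffs on nested subintervals of $[t-1,t]$, and the embedding $H^{m+1}(S^1)\hookrightarrow C^m(S^1)$. The paper only packages the induction through $E_j=\sum_{\ell\le j}\|\partial_\theta^\ell W\|_{L^2}^2$ and $E_j'+\lambda E_{j+1}\le C_jE_j$; one slip in your write-up is that your cutoff supports $[t-1+2^{-k-1},t]$ grow with $k$, whereas they must shrink toward $t$ (for instance $\chi_k$ supported in $[t-2^{-k},t]$, where $\chi_{k-1}\equiv1$) so that each stage can use the dissipation integral $\int\|\partial_\theta^{k}W\|_{L^2}^2$ produced by the previous stage.
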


\begin{proof}
We use the standard $L^2$ energy method. Set
\[
E_j(s)=\sum_{\ell=0}^j\|\partial_\theta^\ell W(s)\|_{L^2(S^1)}^2.
\]
We first derive the differential inequality for $E_j$. Write $W_\ell=\partial_\theta^\ell W$ and $\|\cdot\|_2=\|\cdot\|_{L^2(S^1)}$, using the fixed measure $d\theta$ and the Euclidean inner product on $\mathbb R^N$. Then
\[
\begin{aligned}
(W_\ell)_t
&=\partial_\theta^\ell\left(\alpha W_2+B W_1+C W_0\right)\\
&=\alpha W_{\ell+2}
  +\left(\ell\alpha_\theta I_N+B\right)W_{\ell+1}
  +\mathcal R_\ell,
\end{aligned}
\]
where $I_N$ is the identity matrix and
\[
\begin{aligned}
\mathcal R_\ell
={}&\sum_{r=2}^{\ell}\binom{\ell}{r}
       \left(\partial_\theta^r\alpha\right)W_{\ell-r+2}\\
&+\sum_{r=1}^{\ell}\binom{\ell}{r}
       \left(\partial_\theta^r B\right)W_{\ell-r+1}
 +\sum_{r=0}^{\ell}\binom{\ell}{r}
       \left(\partial_\theta^r C\right)W_{\ell-r},
\end{aligned}
\]
contains only low-order terms. So
\[
\|\mathcal R_\ell\|_2\leq K_\ell E_\ell^{1/2},
\]
where $K_\ell$ is independent of time and of $W$.

Taking the $L^2$ inner product of the differentiated equation with $W_\ell$, periodicity gives
\[
\int_{S^1}W_\ell\cdot\alpha W_{\ell+2}\,d\theta
=-\int_{S^1}\alpha|W_{\ell+1}|^2\,d\theta
 -\int_{S^1}\alpha_\theta W_\ell\cdot W_{\ell+1}\,d\theta.
\]
Consequently,
\[
\begin{aligned}
\frac12\frac{d}{dt}\|W_\ell\|_2^2
={}&-\int_{S^1}\alpha|W_{\ell+1}|^2\,d\theta\\
&+\int_{S^1}W_\ell\cdot
  \left(\left((\ell-1)\alpha_\theta I_N+B\right)W_{\ell+1}\right)\,d\theta
 +\int_{S^1}W_\ell\cdot\mathcal R_\ell\,d\theta.
\end{aligned}
\]
Let $P_\ell$ be a uniform bound for the operator norm of $(\ell-1)\alpha_\theta I_N+B$. Cauchy--Schwarz and Young's inequality imply
\[
\begin{aligned}
\left|\int_{S^1}W_\ell\cdot
  \left(\left((\ell-1)\alpha_\theta I_N+B\right)W_{\ell+1}\right)\,d\theta\right|
&\leq P_\ell\|W_\ell\|_2\|W_{\ell+1}\|_2\\
&\leq\frac{\lambda}{2}\|W_{\ell+1}\|_2^2
 +\frac{P_\ell^2}{2\lambda}\|W_\ell\|_2^2,\\
\left|\int_{S^1}W_\ell\cdot\mathcal R_\ell\,d\theta\right|
&\leq K_\ell\|W_\ell\|_2 E_\ell^{1/2}
\leq K_\ell E_\ell.
\end{aligned}
\]
Since $\alpha\geq\lambda$ and $\|W_\ell\|_2^2\leq E_\ell$, multiplying the resulting inequality by two yields
\[
\frac{d}{dt}\|W_\ell\|_2^2
+\lambda\|W_{\ell+1}\|_2^2
\leq\left(\frac{P_\ell^2}{\lambda}+2K_\ell\right)E_\ell.
\]
Summing over $\ell=0,\ldots,j$ and using $E_\ell\leq E_j$, we obtain
\[
E_j'+\lambda\sum_{\ell=0}^j\|W_{\ell+1}\|_2^2
\leq\left(\sum_{\ell=0}^j
  \left(\frac{P_\ell^2}{\lambda}+2K_\ell\right)\right)E_j.
\]
The sum on the left is $E_{j+1}-E_0$. Adding $\lambda E_0\leq\lambda E_j$ to the right-hand side therefore gives
\begin{equation}\label{app:derivative-energy}
E_j'+\lambda E_{j+1}\leq C_jE_j,
\end{equation}
where $C_j=\lambda+\sum_{\ell=0}^j\left(P_\ell^2/\lambda+2K_\ell\right)$.

Fix $t\geq t_*+1$ and arbitrarily choose $t-1\leq a<b<t$. Choose a smooth nondecreasing time cutoff function $\eta$ on $[a,t]$ such that
\[
0\leq\eta\leq1,\qquad \eta(a)=0,\qquad
\eta=1\text{ on }[b,t],\qquad
0\leq\eta'\leq\frac{2}{b-a}.
\]
Multiplying~\eqref{app:derivative-energy} by $\eta$ and using the product rule gives
\[
(\eta E_j)'+\lambda\eta E_{j+1}
\leq(C_j\eta+\eta')E_j.
\]
For any $\tau\in[b,t]$, integrate from $a$ to $\tau$. Since $\eta(a)=0$, $\eta(\tau)=1$, and $E_{j+1}\geq0$, we obtain
\[
\begin{aligned}
E_j(\tau)+\lambda\int_b^\tau E_{j+1}(s)\,ds
&\leq\int_a^\tau\left(C_j\eta(s)+\eta'(s)\right)E_j(s)\,ds\\
&\leq\left(C_j+\frac{2}{b-a}\right)\int_a^t E_j(s)\,ds.
\end{aligned}
\]
Discarding the nonnegative integral on the left and taking the supremum over $\tau\in[b,t]$ bounds $\sup_{[b,t]}E_j$ by the last expression. Taking instead $\tau=t$ and discarding $E_j(t)$ bounds $\int_b^t E_{j+1}$ by the same expression divided by $\lambda$. Adding these two bounds and absorbing the fixed factors into a constant independent of $a,b,t$ gives
\begin{equation}\label{app:cutoff-estimate}
\sup_{s\in[b,t]}E_j(s)+\int_b^t E_{j+1}(s)\,ds
\leq K_j\left(1+\frac{1}{b-a}\right)\int_a^t E_j(s)\,ds.
\end{equation}
Iterating for $j=0,\ldots,m+1$ on nested intervals, with $m+2$ equal gaps totaling $1/2$, yields
\[
\sup_{s\in[t-\frac12,t]}E_{m+1}(s)
\leq C_m\int_{t-1}^t E_0(s)\,ds.
\]
The coefficient bounds are uniform and the gaps depend only on $m$, so all constants are independent of $t$ and of the size of $W$. The one-dimensional embedding $H^{m+1}(S^1)\hookrightarrow C^m(S^1)$ proves~\eqref{app:rate-preserving-estimate}.
\end{proof}

\section{Evolution equations with tangential velocity}

Let $r(u,t)$ be a smooth closed strong spacelike curve in $\mathbb R^{n+2,k}$ evolving by
\begin{equation}\label{general-weighted-flow}
r_t=a r_{ss}+b r_s+\delta r,\qquad a>0,\quad\delta\in\{0,1\}.
\end{equation}
Here $u$ is a fixed periodic parameter, $s$ is the time-dependent pseudo-Euclidean arclength, and all $t$ derivatives are taken at fixed $u$. The Euclidean case is included by taking $k=0$. Set
\[
v=\sqrt{\langle r_u,r_u\rangle}>0,\qquad
ds=v\,du,\qquad \partial_s=v^{-1}\partial_u,
\]
and define
\[
T=r_s,\qquad \kappa=\sqrt{\langle r_{ss},r_{ss}\rangle}>0,\qquad
N=\frac{r_{ss}}{\kappa},\qquad \mathcal B=N_s+\kappa T.
\]
Since $\langle T,T\rangle=1$, differentiating in $s$ gives
$\langle T,T_s\rangle=0$, so $\langle T,N\rangle=0$ and
$\langle N,N\rangle=1$. Differentiating these identities once more yields
\[
\langle N_s,T\rangle=-\langle N,T_s\rangle=-\kappa,\qquad
\langle N_s,N\rangle=0.
\]
It follows that
\[
T_s=\kappa N,\qquad N_s=-\kappa T+\mathcal B,\qquad
\langle\mathcal B,T\rangle=\langle\mathcal B,N\rangle=0.
\]
A further differentiation of $\langle\mathcal B,N\rangle=0$ gives the identity needed below:
\begin{equation}\label{torsion-normal-identity}
\langle\mathcal B_s,N\rangle
=-\langle\mathcal B,N_s\rangle
=-\langle\mathcal B,-\kappa T+\mathcal B\rangle
=-\langle\mathcal B,\mathcal B\rangle.
\end{equation}

\begin{lemma}\label{derivative exchange of s and t}
For \eqref{general-weighted-flow},
\begin{equation}\label{metric-and-commutator}
\partial_t ds=(b_s+\delta-a\kappa^2)\,ds,\qquad
[\partial_t,\partial_s]=(a\kappa^2-b_s-\delta)\partial_s.
\end{equation}
\end{lemma}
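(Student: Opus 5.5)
We compute directly in the fixed parameter $u$, using that $\partial_t$ and $\partial_u$ commute. Write $ds=v\,du$ with $v^2=\langle r_u,r_u\rangle$. Then
\[
\partial_t(v^2)=2\langle r_{ut},r_u\rangle=2\langle\partial_u(ar_{ss}+br_s+\delta r),r_u\rangle .
\]
Since $r_u=vT$, this equals $2v^2\langle\partial_s(ar_{ss}+br_s+\delta r),T\rangle$. We expand the inner product term by term using the Frenet relations $T_s=\kappa N$ and $N_s=-\kappa T+\mathcal B$, and the orthogonality relations $\langle T,N\rangle=\langle T,\mathcal B\rangle=0$, $\langle T,T\rangle=1$.

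\emph{First term.} Since $ar_{ss}=a\kappa N$, we get $\partial_s(a\kappa N)=(a\kappa)_sN+a\kappa(-\kappa T+\mathcal B)$. Its $T$-component is $-a\kappa^2$.

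\emph{Second term.} We have $\partial_s(bT)=b_sT+b\kappa N$, whose $T$-component is $b_s$.

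\emph{Third term.} We have $\partial_s(\delta r)=\delta T$, whose $T$-component is $\delta$.

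Hence $\partial_t v^2=2v^2(b_s+\delta-a\kappa^2)$, so $\partial_t v=(b_s+\delta-a\kappa^2)v$. Because $du$ is time independent, this gives the first formula, $\partial_t ds=(b_s+\delta-a\kappa^2)\,ds$.

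For the commutator, apply both orders of differentiation to a function $f$. Since $\partial_s=v^{-1}\partial_u$ and $[\partial_t,\partial_u]=0$,
\[
\partial_t\partial_s f=-v^{-2}v_t f_u+v^{-1}f_{ut}=-(v_t/v)\,\partial_s f+\partial_s\partial_t f .
\]
Substituting $v_t/v=b_s+\delta-a\kappa^2$ yields $[\partial_t,\partial_s]=(a\kappa^2-b_s-\delta)\partial_s$. This holds for vector-valued $f$ componentwise.

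The computation is routine, so the only point needing care is bookkeeping. The pseudo-Euclidean signature enters only through $\langle T,T\rangle=1$, $\langle N,N\rangle=1$ and the orthogonality of $\mathcal B$, all of which were established above under the strong spacelike assumption $\kappa>0$. So no sign changes arise relative to the Euclidean case. The Euclidean copy flow with $k=0$ is covered by the same argument.
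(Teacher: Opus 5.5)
Your proof is correct and follows essentially the same route as the paper. Both differentiate $\langle r_u,r_u\rangle$ at fixed $u$, rewrite it with $\partial_u=v\partial_s$, read off the $T$-component of $\partial_s(a\kappa N+bT+\delta r)$ from the Frenet relations, and obtain the commutator from $\partial_s=v^{-1}\partial_u$.
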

\begin{proof}
Differentiate the metric density using the fixed parameter $u$:
\begin{align*}
v_t
&=\frac{1}{2v}\partial_t\langle r_u,r_u\rangle
=\frac1v\langle r_u,(r_t)_u\rangle\\
&=v\langle T,\partial_s(a\kappa N+bT+\delta r)\rangle.
\end{align*}
The spatial derivative of the velocity is
\begin{align*}
\partial_s(a\kappa N+bT+\delta r)
&=(a\kappa)_sN+a\kappa(-\kappa T+\mathcal B)
+b_sT+b\kappa N+\delta T\\
&=(-a\kappa^2+b_s+\delta)T
+\bigl((a\kappa)_s+b\kappa\bigr)N+a\kappa\mathcal B.
\end{align*}
Taking its inner product with $T$ therefore gives
\[
v_t=(b_s+\delta-a\kappa^2)v.
\]
Because $ds=v\,du$ with $u$ fixed, this proves the metric formula.

For any smooth scalar function $\psi(u,t)$, use
$\partial_s\psi=v^{-1}\psi_u$ to compute
\begin{align*}
\partial_t(\partial_s\psi)
&=\partial_t(v^{-1}\psi_u)
=-v^{-2}v_t\psi_u+v^{-1}(\psi_t)_u\\
&=(a\kappa^2-b_s-\delta)\partial_s\psi
+\partial_s(\partial_t\psi).
\end{align*}
This is the asserted commutator; it also applies componentwise to vector-valued functions.
\end{proof}

We record the associated integral differentiation rule. For any smooth $\psi$ on the closed parameter domain,
\begin{align}
\frac{d}{dt}\int_{r(t)}\psi\,ds
&=\frac{d}{dt}\int\psi(u,t)v(u,t)\,du \notag\\
&=\int_{r(t)}
\bigl(\psi_t+(b_s+\delta-a\kappa^2)\psi\bigr)\,ds.
\label{moving-curve-integral}
\end{align}

\begin{remark}
The metric and commutator do not require strong spacelikeness.
For a merely spacelike curve let $\mathcal H=T_s=r_{ss}$, without normalizing this vector. Then
\[
\langle T,\mathcal H\rangle=0,\qquad
\langle T,\mathcal H_s\rangle
=-\langle T_s,\mathcal H\rangle
=-\langle\mathcal H,\mathcal H\rangle.
\]
Consequently, the same metric calculation becomes
\begin{align*}
v_t/v
&=\langle T,\partial_s(a\mathcal H+bT+\delta r)\rangle\\
&=a_s\langle T,\mathcal H\rangle
+a\langle T,\mathcal H_s\rangle+b_s
+b\langle T,\mathcal H\rangle+\delta\\
&=b_s+\delta-a\langle\mathcal H,\mathcal H\rangle.
\end{align*}
Thus \eqref{metric-and-commutator} holds with $\kappa^2$ replaced by
$\langle\mathcal H,\mathcal H\rangle$, which may be negative or zero.
In particular, adding $b r_s$ changes the local commutator by $-b_s\partial_s$; this term cannot be omitted.
\end{remark}

\begin{proposition}\label{curvature evolution}
Under \eqref{general-weighted-flow},
\begin{equation}\label{weighted-curvature-evolution}
\kappa_t=(a\kappa)_{ss}+b\kappa_s+a\kappa^3
-a\kappa\langle\mathcal B,\mathcal B\rangle-\delta\kappa.
\end{equation}
\end{proposition}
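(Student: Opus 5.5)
The plan is to compute $\kappa_t$ from $\kappa^2=\langle r_{ss},r_{ss}\rangle$. Write $H=r_{ss}=\kappa N$, so that $\kappa\kappa_t=\langle H_t,H\rangle$ and hence $\kappa_t=\langle H_t,N\rangle$. Everything therefore reduces to computing the normal component of $H_t$.

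Set $c=a\kappa^2-b_s-\delta$. The commutator in Lemma~\ref{derivative exchange of s and t} gives
\[
T_t=\partial_s r_t+c\,T,
\qquad
H_t=\partial_t\partial_s T=\partial_s T_t+c\,T_s.
\]
First compute $\partial_s r_t$ using the Frenet relations. With $r_t=a\kappa N+bT+\delta r$ we get
\[
\partial_s r_t=(-a\kappa^2+b_s+\delta)T+\bigl((a\kappa)_s+b\kappa\bigr)N+a\kappa\mathcal B .
\]
This was already done in the proof of that lemma. Adding $cT$ cancels the tangential part, leaving
\[
T_t=\bigl((a\kappa)_s+b\kappa\bigr)N+a\kappa\mathcal B .
\]

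Next differentiate in $s$ and pair with $N$. The useful identities are:
\begin{itemize}
\item $\langle N_s,N\rangle=0$;
\item $\langle T,N\rangle=0$;
\item $\langle\mathcal B,N\rangle=0$;
\item $\langle\mathcal B_s,N\rangle=-\langle\mathcal B,\mathcal B\rangle$, from \eqref{torsion-normal-identity}.
\end{itemize}
Using them, the pairing gives
\[
\langle\partial_s T_t,N\rangle=(a\kappa)_{ss}+b_s\kappa+b\kappa_s-a\kappa\langle\mathcal B,\mathcal B\rangle .
\]
The remaining term is $\langle c\,T_s,N\rangle=c\kappa=a\kappa^3-b_s\kappa-\delta\kappa$.

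Summing the two pieces, the $b_s\kappa$ terms cancel, and the result is
\[
\kappa_t=(a\kappa)_{ss}+b\kappa_s+a\kappa^3-a\kappa\langle\mathcal B,\mathcal B\rangle-\delta\kappa,
\]
which is \eqref{weighted-curvature-evolution}.

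The main point needing care is the sign bookkeeping in the pseudo-Euclidean pairings. Strong spacelikeness guarantees $\langle N,N\rangle=1$, so the normalization $\kappa\kappa_t=\langle H_t,H\rangle$ is legitimate, and $\kappa>0$ allows division by $\kappa$. The vector $\mathcal B$ may be non-spacelike, but it only appears through $\langle\mathcal B,\mathcal B\rangle$, so no further assumption on it is needed.
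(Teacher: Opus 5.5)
Your proposal is correct and follows essentially the same route as the paper. Both arguments use the commutator to get $T_t=\bigl((a\kappa)_s+b\kappa\bigr)N+a\kappa\mathcal B$, write $\kappa_t=\langle N,(T_t)_s+(a\kappa^2-b_s-\delta)T_s\rangle$, and finish with \eqref{torsion-normal-identity} and the cancellation of the $b_s\kappa$ terms.
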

\begin{proof}
First apply the commutator to $T=r_s$:
\begin{align*}
T_t
&=\partial_s r_t+(a\kappa^2-b_s-\delta)r_s\\
&=\partial_s(a\kappa N+bT+\delta r)
+(a\kappa^2-b_s-\delta)T\\
&=\bigl((a\kappa)_s+b\kappa\bigr)N+a\kappa\mathcal B.
\end{align*}
The $T$ components cancel exactly. Abbreviate
$\alpha=(a\kappa)_s+b\kappa$. Differentiating this expression in $s$ gives
\begin{align*}
(T_t)_s
&=\alpha_sN+\alpha N_s+(a\kappa)_s\mathcal B+a\kappa\mathcal B_s\\
&=-\kappa\alpha T+\alpha_sN
+\bigl(\alpha+(a\kappa)_s\bigr)\mathcal B+a\kappa\mathcal B_s.
\end{align*}
Since $\kappa^2=\langle T_s,T_s\rangle$ and $\kappa>0$,
\begin{align*}
\kappa_t
&=\frac1{2\kappa}\partial_t\langle T_s,T_s\rangle
=\frac1\kappa\langle T_s,(T_s)_t\rangle\\
&=\left\langle N,(T_t)_s+
(a\kappa^2-b_s-\delta)T_s\right\rangle\\
&=\alpha_s+a\kappa\langle N,\mathcal B_s\rangle
+(a\kappa^2-b_s-\delta)\kappa.
\end{align*}
Now use \eqref{torsion-normal-identity} and
$\alpha_s=(a\kappa)_{ss}+b_s\kappa+b\kappa_s$:
\begin{align*}
\kappa_t
&=(a\kappa)_{ss}+b_s\kappa+b\kappa_s
-a\kappa\langle\mathcal B,\mathcal B\rangle
+a\kappa^3-b_s\kappa-\delta\kappa\\
&=(a\kappa)_{ss}+b\kappa_s+a\kappa^3
-a\kappa\langle\mathcal B,\mathcal B\rangle-\delta\kappa.
\end{align*}
This also exhibits the cancellation of the two $b_s\kappa$ terms.
\end{proof}

\begin{proposition}\label{total curvature evolution}
For the closed curves in \eqref{general-weighted-flow}, their total curvature satisfies
\[
K(t)=\int_{r(t)}\kappa\,ds,\qquad
K'(t)=-\int_{r(t)}a\kappa\langle\mathcal B,\mathcal B\rangle\,ds.
\]
\end{proposition}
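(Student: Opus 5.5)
We differentiate $K(t)=\int\kappa\,ds$ with the moving-curve integral rule \eqref{moving-curve-integral}, taking $\psi=\kappa$. This gives
\[
K'(t)=\int_{r(t)}\bigl(\kappa_t+(b_s+\delta-a\kappa^2)\kappa\bigr)\,ds .
\]

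Next we substitute the curvature evolution \eqref{weighted-curvature-evolution} from Proposition~\ref{curvature evolution}:
\[
K'=\int\Bigl((a\kappa)_{ss}+b\kappa_s+a\kappa^3-a\kappa\langle\mathcal B,\mathcal B\rangle-\delta\kappa+b_s\kappa+\delta\kappa-a\kappa^3\Bigr)\,ds .
\]
The terms $a\kappa^3$ cancel, and so do the $\delta\kappa$ terms. That cancellation is the scaling invariance of total curvature, and it is why the normalization term plays no role.

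The remaining terms group as $(a\kappa)_{ss}+(b\kappa)_s$, using $b\kappa_s+b_s\kappa=(b\kappa)_s$. Both are exact $s$-derivatives of smooth periodic functions. Since $ds=v\,du$ and $\partial_s=v^{-1}\partial_u$, we have $\int\partial_s\phi\,ds=\int\phi_u\,du=0$ on the closed parameter domain. So both terms integrate to zero, which leaves
\[
K'(t)=-\int a\kappa\langle\mathcal B,\mathcal B\rangle\,ds .
\]

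There is no real obstacle here. The only point needing care is that the tangential coefficient $b$ contributes twice: once through the metric factor $b_s$, and once through $b\kappa_s$ in the curvature equation. These two contributions must be combined into the total derivative $(b\kappa)_s$ before integrating. We also need $\kappa>0$ (strong spacelikeness) throughout, so that $\kappa$ is smooth and Proposition~\ref{curvature evolution} applies.
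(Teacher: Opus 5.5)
Your proof is correct and follows the paper's argument essentially line for line. You substitute $\psi=\kappa$ into \eqref{moving-curve-integral}, insert \eqref{weighted-curvature-evolution}, cancel the $a\kappa^3$ and $\delta\kappa$ terms, combine $b\kappa_s+b_s\kappa=(b\kappa)_s$, and kill the exact derivatives by periodicity. The only cosmetic difference is how you justify that last step: you use $\int\partial_s\phi\,ds=\int\phi_u\,du$ on the fixed parameter circle, whereas the paper reparametrizes by arclength on $[0,L(t)]$.
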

\begin{proof}
Substitute $\psi=\kappa$ in \eqref{moving-curve-integral} and then use
\eqref{weighted-curvature-evolution}:
\begin{align*}
K'
&=\int_{r(t)}\bigl(\kappa_t+
(b_s+\delta-a\kappa^2)\kappa\bigr)\,ds\\
&=\int_{r(t)}\bigl((a\kappa)_{ss}+b\kappa_s
+a\kappa^3-a\kappa\langle\mathcal B,\mathcal B\rangle-\delta\kappa\\
&\hspace{42mm}{}+b_s\kappa+\delta\kappa-a\kappa^3\bigr)\,ds\\
&=\int_{r(t)}\bigl((a\kappa)_{ss}+(b\kappa)_s
-a\kappa\langle\mathcal B,\mathcal B\rangle\bigr)\,ds.
\end{align*}
At a fixed time, parametrize once around the full parameter domain by arclength $s\in[0,L(t)]$. Periodicity gives
\[
\int_0^{L(t)}(a\kappa)_{ss}\,ds
=[(a\kappa)_s]_0^{L(t)}=0,\qquad
\int_0^{L(t)}(b\kappa)_s\,ds=[b\kappa]_0^{L(t)}=0.
\]
This proves the formula. The argument uses periodicity of the parametrized curve, not injectivity, so it remains valid for closed immersions and multiple coverings.
\end{proof}

\smallskip
\noindent\emph{Two sign conventions and their consequences.}
In Euclidean space,
$\tau^2=|\mathcal B|_E^2=\langle\mathcal B,\mathcal B\rangle\geq0$,
and the formulas read
\[
\kappa_t=(a\kappa)_{ss}+b\kappa_s+a\kappa^3-a\kappa\tau^2-\delta\kappa,
\qquad K'=-\int a\kappa\tau^2\,ds\leq0.
\]
In $\mathbb R^{2,k}$, the orthogonal complement of the positive definite plane
$\operatorname{span}\{T,N\}$ is negative definite. Thus $\mathcal B$ is timelike or zero. Defining
$\tau^2=-\langle\mathcal B,\mathcal B\rangle\geq0$ gives
\[
\kappa_t=(a\kappa)_{ss}+b\kappa_s+a\kappa^3+a\kappa\tau^2-\delta\kappa,
\qquad K'=\int a\kappa\tau^2\,ds\geq0.
\]
In particular, for unnormalized CSF ($a=1$, $b=\delta=0$) in $\mathbb R^{2,1}$,
\[
\kappa_t=\kappa_{ss}+\kappa^3+\kappa\tau^2,\qquad
K'=\int\kappa\tau^2\,ds.
\]
At a spatial minimum of $\kappa$, the first equation gives
$\kappa_t\geq\kappa^3\geq0$. On a smooth spacelike time interval starting from a closed strong spacelike curve, the maximum principle therefore yields
$\min\kappa(\cdot,t)\geq\min\kappa(\cdot,0)>0$ as long as $\kappa$ is defined.
If strong spacelikeness were first lost at a smooth spacelike time, continuity would force $\langle r_{ss},r_{ss}\rangle=\kappa^2$ to reach zero, contradicting this bound. This is the preservation argument used for the clam-shell example.

\section{Exponential convergence}\label{sec:exponential-convergence}

We now improve the smooth convergence proved in Section~\ref{sec:isoperimetric-convergence} to an explicit exponential rate. The argument uses polar graphs, two energy estimates, and the rate-preserving interior estimate of Lemma~\ref{rate-preserving interior estimate}. All norms and all inner products in the energies below are taken in fixed auxiliary Euclidean coordinates; the indefinite transverse product is always indicated explicitly. Constants may depend on the given flow and the derivative order, but not on the late normalized time.

\begin{theorem}[Exponential round-point convergence]\label{exponential convergence}
For the normalized flow in the round-point case established in Sections~\ref{sec:tangent-estimates}--\ref{sec:isoperimetric-convergence}, let
\[
C_\infty(\theta)=v_1\cos\theta+v_2\sin\theta,
\qquad \langle v_i,v_j\rangle=\delta_{ij},
\]
be its limiting single-covered centered unit circle. There are smooth diffeomorphisms $\varphi_\sigma:S^1\to S^1$ and constants $C_m$ such that
\begin{equation}\label{exp:curve-rate}
\|\gamma(\varphi_\sigma(\cdot),\sigma)-C_\infty\|_{C^m(S^1;E)}
\leq C_m e^{-2\sigma},\qquad m\geq0,
\end{equation}
on a sufficiently late time interval. The circle and its plane are fixed in time.
\end{theorem}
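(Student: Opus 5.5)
The plan is to reduce to a polar-graph system over the limiting circle, linearize around $C_\infty$, and read the rate $e^{-2\sigma}$ off the spectrum. The neutral and unstable modes are to be killed with the global information already proved (the limit point, the area, and the fixed plane). After a fixed pseudo-Euclidean isometry we may assume $C_\infty(\theta)=(\cos\theta,\sin\theta,0,\ldots,0)$ in the $xy$-plane. The late-time projections are strictly convex and enclose $B_r$, so we parametrize by the polar angle of the projection; this choice defines $\varphi_\sigma$. We then write
\[
\gamma=\bigl((1+w)\cos\theta,(1+w)\sin\theta,Z\bigr),\qquad Z=(z_1,\ldots,z_n,w_1,\ldots,w_k),
\]
and impose a tangential gauge so that $\theta$ stays the polar angle. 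Smooth convergence from Section~\ref{sec:isoperimetric-convergence} gives $\|w\|_{C^m}+\|Z\|_{C^m}\to0$. The resulting system has the form
\[
w_\sigma=\alpha\, w_{\theta\theta}+2w+N_1,\qquad Z_\sigma=\alpha\, Z_{\theta\theta}+Z+N_2,
\]
with $\alpha\to1$ and $N_i$ quadratic in $(w,w_\theta,w_{\theta\theta},Z,Z_\theta,Z_{\theta\theta})$. Its linearization has the Fourier modes $\cos n\theta,\sin n\theta$ with exponents $2-n^2$ for $w$ and $1-n^2$ for $Z$. The first stable exponent for $w$ is $-2$ (at $n=2$), which is the claimed rate; for $Z$ the first stable exponent is already $-3$.

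The low modes are handled in turn. The $w$-mode $n=0$ (exponent $2$) is the scaling mode; it is controlled by $A(\sigma)\to\pi$ together with the ODE $A'=2A-J$, since any nonzero projection onto it would grow. The $w$-modes $n=1$ and the $Z$-mode $n=0$ (exponents $1$) are translations. They are excluded because we normalized about the point $a$ to which $\Gamma$ shrinks, using the same maximum-principle argument for linear functions $\lambda(\gamma)$ as in Step~5. The genuinely delicate mode is the $Z$-mode $n=1$, which has exponent $0$: it is the infinitesimal tilting of the plane. I would handle it with the monotone inclination from Lemma~\ref{three point monotonicity}. The quantity $\Delta(\mathscr T(\sigma))$ is nonincreasing and tends to $0$, and it dominates the $n=1$ coefficient of $Z$ up to quadratic errors. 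Projecting the $Z$-equation onto $\cos\theta,\sin\theta$ then gives an ODE $\beta'=O(\|(w,Z)\|^2)$ with $\beta\to0$. Integrating from $\infty$ bounds $|\beta(\sigma)|$ by $\int_\sigma^\infty$ of the squared deviation, so $\beta$ inherits twice the decay rate of everything else.

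With these modes controlled, I would set up the two energies. The first is $\mathcal Q=\|w^{\perp}\|_{L^2}^2+\|Z^{\perp}\|_{L^2}^2$, where $\perp$ removes the low modes. The second is the dissipation-type energy $\mathcal E=\|w^\perp_\theta\|^2-4\|w^\perp\|^2+\|Z^\perp_\theta\|^2-\|Z^\perp\|^2$, with dissipation $\mathcal D$. The spectral gap gives $\mathcal Q'\le-(4-\varepsilon(\sigma))\mathcal Q+C\,(\text{low modes})^2$ with $\varepsilon(\sigma)\to0$. Combined with the low-mode estimates, this first yields decay $e^{-(2-\varepsilon)\sigma}$ for the $L^2$ size. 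A bootstrap then removes the $\varepsilon$: the coefficient error $\alpha-1$ is itself quadratic in the deviation, so $\int^\infty\varepsilon(\sigma)\,d\sigma<\infty$ once any exponential rate is known. This upgrades the bound to $\|(w,Z)(\sigma)\|_{L^2}\le Ce^{-2\sigma}$. Finally, Lemma~\ref{rate-preserving interior estimate} applies to $W=(w,Z)$, whose coefficients are uniformly bounded in all derivatives by Corollary~\ref{coordinate derivative bound}. It converts the $L^2$ rate on unit time windows into the $C^m$ rate \eqref{exp:curve-rate}.

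The main obstacle I expect is the neutral tilting mode together with the removal of the $\varepsilon$-loss. Without sharp control of the $n=1$ component of $Z$, the argument only gives decay like $e^{-(2-\varepsilon)\sigma}$ or merely algebraic decay. The first thing I would try is the integrate-from-infinity argument, with its quadratic source estimated by the already-established slower exponential rate.
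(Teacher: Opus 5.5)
\textbf{There is a genuine gap at the neutral tilt mode.} Your linearization and mode count are correct, and you rightly single out this mode as the crux. But your treatment of it is circular, and nothing else in the plan produces a \emph{first} exponential rate.

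Projecting the $Z$-equation onto $\cos\theta,\sin\theta$ gives $\beta'=O(\|w\|_{C^2}\|Z\|_{C^2}+\|Z\|_{C^2}^3)$. This is because $\alpha-1=-2w+O(|(w,Z)|^2)$, which is linear in $w$, not quadratic as you claim. However, $w$ is itself driven by $\langle Z_\theta,Z_\theta\rangle_J$, whose modes $0$ and $2$ force $w$ to be of size $\beta^2$. Indeed, a unit circle in a slightly tilted spacelike plane is an exact equilibrium whose projection is an ellipse with $w\sim\beta^2$.

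Hence integrating from infinity gives only $|\beta(\sigma)|\lesssim\int_\sigma^\infty|\beta|^3\,ds$ plus faster terms, which is consistent with $\beta\sim\sigma^{-1/2}$. The same problem appears in your inequality for $\|w^\perp\|^2+\|Z^\perp\|^2$, whose source contains $\beta^2$. The ``already-established slower exponential rate'' you want to feed in does not exist: Sections~\ref{sec:tangent-estimates}--\ref{sec:isoperimetric-convergence} give only qualitative convergence. The monotone inclination of Lemma~\ref{three point monotonicity} also tends to zero without a rate. To close the argument in your variables, you would have to exhibit the cancellation of the $\beta^3$ terms coming from the manifold of tilted circles, for instance by modulating to the nearest tilted circle. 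The proposal does not do this.

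\textbf{The missing idea is the change of variables $h=Z/r$, with $r=1+w$.} Subtracting $h$ times the $r$-equation from the $Z$-equation cancels the mixed first-order terms. The result is the \emph{exactly linear} equation $h_\sigma=\alpha(h_{\theta\theta}+h)$, with no remainder.

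Then $\mathcal Q=\int(|h_\theta|^2-|h|^2)\,d\theta$ satisfies $\mathcal Q'=-2\int\alpha|L_1h|^2\,d\theta$. So $\mathcal Q$ is nonincreasing with limit zero, hence nonnegative. Parseval gives $\|L_1h\|_{L^2}^2\ge3\mathcal Q$, so with $\alpha\ge2/3$ one gets $\mathcal Q\le Ce^{-4\sigma}$, independently of $w$ and of the kernel of $L_1$. Lemma~\ref{rate-preserving interior estimate}, applied to $v=L_1h$, gives $\|v\|_{C^m}\le C_me^{-2\sigma}$. Then $h(\sigma)=-\int_\sigma^\infty\alpha v\,ds$ controls the tilt and transverse-translation modes at once.

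Once $Z=O(e^{-2\sigma})$, the radial source is $O(e^{-4\sigma})$. Take the energy $\mathcal E=\int(w_\theta^2-2w^2)\,d\theta$ with dissipation $\|w_{\theta\theta}+2w\|^2\ge2\mathcal E$, and apply the same ``monotone with zero terminal value'' device. This handles the unstable scaling and translation modes without any appeal to the area or the extinction point. It yields first the rate $e^{-\sigma}$, and then, with a time-dependent Young parameter, the rate $e^{-2\sigma}$.
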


\begin{proof}
\emph{1. Polar equations and the interior estimate.}
Work in the fixed coordinates chosen at the end of Section~\ref{sec:planar-convergence}, so that Section~\ref{sec:isoperimetric-convergence} gives the limit $C(\theta)=(\cos\theta,\sin\theta,0)$. In this section, we change the notion of $\theta$ to the common polar parameter on the $xy$-plane and write
\[
\gamma=(r e_r,Z),\qquad e_r=(\cos\theta,\sin\theta),\qquad
r=1+u>0,\qquad Z\in\mathbb R^{n,k}.
\]
We derive the evolution equations of $u=r-1$ and $Z$ under the polar parameterization. Let
\begin{equation}\label{exp:graph-data}
\begin{gathered}
J=\operatorname{diag}(I_n,-I_k),\qquad
\langle z,w\rangle_J=z^{\mathsf T}Jw,\qquad h=Z/r,\\
g=\langle\gamma_\theta,\gamma_\theta\rangle
=r^2+r_\theta^2+\langle Z_\theta,Z_\theta\rangle_J,
\qquad \alpha=g^{-1}.
\end{gathered}
\end{equation}
Here $u$ denotes the radial error only in this section, and $\alpha$ is the diffusion coefficient, not the extinction point $a$. The previously established smooth convergence and Corollary~\ref{coordinate derivative bound} imply
\[
u,Z\to0,\qquad \alpha\to1
\quad\hbox{in every spatial }C^m\hbox{ norm}.
\]
In particular all coefficient derivatives are uniformly bounded, and we may assume $r\geq1/2$ and $2/3\leq\alpha\leq2$.

Since $\partial_s=g^{-1/2}\partial_\theta$,
\[
\gamma_{ss}=\alpha\gamma_{\theta\theta}
-\frac{g_\theta}{2g^2}\gamma_\theta.
\]
The tangential correction
\[
\beta=\frac{g_\theta}{2g^2}-\frac{2r_\theta}{gr}
\]
in $\gamma_\sigma=\gamma_{ss}+\gamma+\beta\gamma_\theta$ cancels its angular component. The radial and transverse components are therefore
\begin{align}
r_\sigma&=\alpha\left(r_{\theta\theta}-r-\frac{2r_\theta^2}{r}\right)+r,
\label{exp:r-equation}\\
Z_\sigma&=\alpha\left(Z_{\theta\theta}-\frac{2r_\theta}{r}Z_\theta\right)+Z.
\label{exp:Z-equation}
\end{align}
Substituting $Z=rh$ and subtracting~\eqref{exp:r-equation} multiplied by $h$ cancels the mixed first derivatives and gives
\begin{equation}\label{exp:h-equation}
h_\sigma=\alpha L_1h,\qquad L_1=\partial_\theta^2+1.
\end{equation}
Similarly, $g-1=2u+u^2+u_\theta^2+\langle Z_\theta,Z_\theta\rangle_J$ yields
\begin{equation}\label{exp:u-equation}
u_\sigma=\alpha L_2u+R_r+R_z,\qquad L_2=\partial_\theta^2+2,
\end{equation}
where the exact remainders are
\begin{equation}\label{exp:remainders}
R_r=\alpha\left[3u^2+u^3+\left(r-\frac2r\right)u_\theta^2\right],
\qquad R_z=\alpha r\langle Z_\theta,Z_\theta\rangle_J.
\end{equation}

We record why Lemma~\ref{rate-preserving interior estimate} applies without an independent source term. Along the given solution, $U=(u,Z)$ satisfies
\[
U_\sigma=\alpha U_{\theta\theta}+BU_\theta+CU.
\]
If $\epsilon_j=J_{jj}$, the nonzero matrix entries are
\begin{align*}
B_{00}&=\alpha(r-2/r)u_\theta,& B_{0j}&=\alpha r\epsilon_jZ_\theta^j,\\
B_{jj}&=-2\alpha u_\theta/r,& C_{00}&=\alpha r(r+1),\qquad C_{jj}=1,
\end{align*}
for $1\leq j\leq n+k$. Every unspecified entry is zero. These coefficients have uniform spatial derivative bounds. Thus the cited lemma gives
\begin{equation}\label{exp:U-smoothing}
\|U(\sigma)\|_{C^m}
\leq C_m\left(\int_{\sigma-1}^{\sigma}\|U(s)\|_{L^2}^2\,ds\right)^{1/2}.
\end{equation}

\smallskip
\emph{2. Transverse decay.}
Set
\[
\mathcal Q=\int_{S^1}(|h_\theta|_E^2-|h|_E^2)\,d\theta,
\qquad v=L_1h.
\]
Periodic integration by parts in~\eqref{exp:h-equation} gives
\begin{equation}\label{exp:Q-dissipation}
\mathcal Q'=-2\int_{S^1}\alpha|v|_E^2\,d\theta.
\end{equation}
Although this quadratic form is not nonnegative on all functions, $h\to0$ gives $\mathcal Q(\infty)=0$. Its monotonicity therefore implies $\mathcal Q\geq0$ along the present solution. Parseval's identity gives, with $h_j$ the Fourier coefficients,
\[
\|v\|_{L^2}^2-3\mathcal Q
=2\pi\sum_{j\in\mathbb Z}(j^2-1)(j^2-4)|h_j|_E^2\geq0.
\]
Every displayed multiplier is nonnegative for integer $j$, so no low mode has been discarded. Since $\alpha\geq2/3$,~\eqref{exp:Q-dissipation} implies
\begin{equation}\label{exp:transverse-energy-rate}
0\leq\mathcal Q(\sigma)\leq Ce^{-4\sigma},\qquad
\int_{\sigma-1}^{\sigma}\|v(s)\|_{L^2}^2\,ds\leq Ce^{-4\sigma}.
\end{equation}
The equation
\[
v_\sigma=L_1(\alpha v)
=\alpha v_{\theta\theta}+2\alpha_\theta v_\theta
+(\alpha_{\theta\theta}+\alpha)v
\]
has uniformly bounded coefficients of all spatial orders. Lemma~\ref{rate-preserving interior estimate} gives $\|v\|_{C^m}\leq C_m e^{-2\sigma}$. Since $h(\infty)=0$,~\eqref{exp:h-equation} now yields the convergent terminal integral
\[
h(\sigma)=-\int_\sigma^\infty\alpha(s)v(s)\,ds.
\]
The product rule and $Z=rh$ consequently give
\begin{equation}\label{exp:transverse-rate}
\|h(\sigma)\|_{C^m}+\|Z(\sigma)\|_{C^m}\leq C_m e^{-2\sigma}.
\end{equation}
In particular the terminal integral controls the first harmonics in $\ker L_1$; their coefficients need not vanish identically. When $n+k=0$, the transverse quantities are absent and~\eqref{exp:transverse-rate} is vacuous.

\smallskip
\emph{3. A preliminary radial rate.}
Define
\[
\mathcal E=\int_{S^1}(u_\theta^2-2u^2)\,d\theta,
\qquad \mathcal D=\|L_2u\|_{L^2}^2.
\]
Again by Parseval,
\begin{equation}\label{exp:radial-coercivity}
\begin{gathered}
\mathcal D-2\mathcal E
=2\pi\sum_{j\in\mathbb Z}(j^2-2)(j^2-4)|u_j|^2\geq0,\\
\|u\|_{H^2}\leq C\mathcal D^{1/2}.
\end{gathered}
\end{equation}
The last estimate follows because $L_2$ has no zero Fourier eigenvalue. Neither estimate requires $\mathcal E\geq0$ or any orthogonality condition on $u$. Differentiation and~\eqref{exp:u-equation} give
\begin{equation}\label{exp:radial-energy-identity}
\mathcal E'=-2\int\alpha(L_2u)^2\,d\theta
-2\int(L_2u)(R_r+R_z)\,d\theta.
\end{equation}
Equations~\eqref{exp:remainders},~\eqref{exp:transverse-rate}, and~\eqref{exp:radial-coercivity} imply
\[
\|R_r\|_{L^2}\leq C\|u\|_{C^1}\mathcal D^{1/2},\qquad
\|R_z\|_{L^2}\leq C\|Z_\theta\|_\infty\|Z_\theta\|_{L^2}
\leq Ce^{-4\sigma}.
\]
Using $\|u\|_{C^1}\to0$, $\alpha\to1$, and Young's inequality in~\eqref{exp:radial-energy-identity}, we obtain on a later tail
\begin{equation}\label{exp:coarse-energy-inequality}
\mathcal E'\leq-\mathcal D+C_8e^{-8\sigma}.
\end{equation}
Put
\[
H_8(\sigma)=\int_\sigma^\infty C_8e^{-8s}\,ds,
\qquad \mathcal E_8=\mathcal E+H_8.
\]
Then $\mathcal E_8'\leq-\mathcal D$ and $\mathcal E_8(\infty)=0$, so $\mathcal E_8\geq0$. By~\eqref{exp:radial-coercivity},
\[
\mathcal E_8'\leq-2\mathcal E_8+2H_8.
\]
Gronwall's inequality and the dissipation estimate give
\[
\mathcal E_8(\sigma)\leq Ce^{-2\sigma},\qquad
\int_{\sigma-1}^{\sigma}\mathcal D(s)\,ds
\leq\mathcal E_8(\sigma-1)\leq Ce^{-2\sigma}.
\]
Combining this with~\eqref{exp:radial-coercivity},~\eqref{exp:transverse-rate}, and the already established interior estimate~\eqref{exp:U-smoothing} yields
\begin{equation}\label{exp:preliminary-rate}
\|u(\sigma)\|_{C^m}\leq C_m e^{-\sigma}.
\end{equation}

\smallskip
\emph{4. The endpoint exponent.}
Equation~\eqref{exp:graph-data} and the preceding rates imply
\[
\|\alpha-1\|_\infty\leq Ce^{-\sigma},\qquad
\|R_r\|_{L^2}\leq Ce^{-\sigma}\mathcal D^{1/2},\qquad
\|R_z\|_{L^2}\leq Ce^{-4\sigma}.
\]
Apply~\eqref{exp:radial-energy-identity} once more. This time choose a time-dependent Young parameter:
\[
Ce^{-4\sigma}\mathcal D^{1/2}
\leq e^{-\sigma}\mathcal D+C'e^{-7\sigma}.
\]
Consequently, for fixed constants $C_0,C_7$,
\begin{equation}\label{exp:sharp-energy-inequality}
\mathcal E'\leq-(2-C_0e^{-\sigma})\mathcal D+C_7e^{-7\sigma}.
\end{equation}
As in Step~3, define
\[
H_7(\sigma)=\int_\sigma^\infty C_7e^{-7s}\,ds,
\qquad \mathcal E_7=\mathcal E+H_7.
\]
On a tail where $2-C_0e^{-\sigma}\geq1$, its dissipation and zero terminal limit give $\mathcal E_7\geq0$. Using~\eqref{exp:radial-coercivity} now gives
\begin{equation}\label{exp:sharp-energy-ode}
\mathcal E_7'\leq-(4-2C_0e^{-\sigma})\mathcal E_7+Ce^{-7\sigma}.
\end{equation}
The coefficient error is integrable, and for $\sigma\geq s\geq\sigma_0$,
\[
\exp\left(-\int_s^\sigma(4-2C_0e^{-\xi})\,d\xi\right)
\leq C e^{-4(\sigma-s)}.
\]
Gronwall's inequality applied to~\eqref{exp:sharp-energy-ode} therefore yields
\[
\mathcal E_7(\sigma)\leq Ce^{-4\sigma},\qquad
\int_{\sigma-1}^{\sigma}\mathcal D(s)\,ds\leq Ce^{-4\sigma}.
\]
The second estimate follows by integrating~\eqref{exp:sharp-energy-inequality} after adding $H_7$. By~\eqref{exp:radial-coercivity},~\eqref{exp:transverse-rate}, and~\eqref{exp:U-smoothing},
\begin{equation}\label{exp:final-graph-rate}
\|u(\sigma)\|_{C^m}+\|Z(\sigma)\|_{C^m}\leq C_m e^{-2\sigma}.
\end{equation}
Finally $\gamma-C=(u e_r,Z)$, and all derivatives of $e_r$ are bounded. The estimate follows in the polar gauge. Returning to the original fixed coordinates changes only the constants and proves~\eqref{exp:curve-rate}, hence~\eqref{main-exponential-rate}.
\end{proof}

\begin{corollary}\label{exponential rate in original time}
In the round-point case, with the extinction point $a$ and the reparametrizations of Theorem~\ref{exponential convergence},
\begin{equation}\label{exp:original-time-rate}
\|\Gamma(\varphi_t(\cdot),t)-a-\sqrt{2(T-t)}\,C_\infty\|_{C^m(S^1;E)}
\leq\widetilde C_m(T-t)^{3/2}
\end{equation}
for every $m\geq0$ and all sufficiently late $t<T$, where $\varphi_t=\varphi_{\sigma(t)}$.
\end{corollary}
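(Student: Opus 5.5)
The plan is to unwind the normalization in the estimate of Theorem~\ref{exponential convergence}. By the definition of $\gamma$ in Theorem~\ref{main theorem 2},
\[
\Gamma(\theta,t)-a=\sqrt{2(T-t)}\,\gamma(\theta,\sigma(t)),\qquad \sigma(t)=-\tfrac12\log\tfrac{T-t}{T},
\]
so $e^{-2\sigma}=(T-t)/T$. We reparametrize by $\varphi_t=\varphi_{\sigma(t)}$, which is a diffeomorphism of $S^1$ independent of the ambient scaling. Then
\[
\Gamma(\varphi_t(\cdot),t)-a-\sqrt{2(T-t)}\,C_\infty=\sqrt{2(T-t)}\bigl(\gamma(\varphi_\sigma(\cdot),\sigma)-C_\infty\bigr).
\]

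Next we take the $C^m(S^1;E)$ norm. Derivatives are taken in the fixed parameter $\theta\in S^1$, not in arclength, so the scalar factor $\sqrt{2(T-t)}$ simply pulls out of every seminorm. Applying \eqref{exp:curve-rate} gives
\[
\|\cdots\|_{C^m}\le \sqrt{2(T-t)}\,C_m e^{-2\sigma}=\sqrt2\,C_m T^{-1}(T-t)^{3/2}.
\]
This is \eqref{exp:original-time-rate} with $\widetilde C_m=\sqrt2\,C_mT^{-1}$. The phrase ``sufficiently late $t$'' corresponds to $\sigma$ beyond the threshold of Theorem~\ref{exponential convergence}, because $\sigma(t)$ is increasing.

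There is essentially no obstacle here. The only point to check is that the $C^m$ norms are taken with respect to the fixed parameter circle rather than any time-dependent arclength, since otherwise rescaling would introduce powers of $(T-t)$ in higher derivatives. The statement of Theorem~\ref{exponential convergence} uses $C^m(S^1;E)$ on the parameter circle, so the factor is exactly linear.
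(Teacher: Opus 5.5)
Your argument is correct and is essentially the paper's own one-line proof: write $\Gamma-a=\sqrt{2(T-t)}\,\gamma$ via the CSF/NF correspondence, use $e^{-2\sigma}=(T-t)/T$, and multiply \eqref{exp:curve-rate} by $\sqrt{2(T-t)}$, which gives $\widetilde C_m=\sqrt2\,C_m/T$. Your remark that the $C^m$ norms are taken on the fixed parameter circle, so the scalar factor pulls out linearly, is the only point that needs checking, and you handled it correctly.
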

\begin{proof}
Use the CSF/NF correspondence~\eqref{expression of NF} with $T_0=T$ after translating by $a$. Since $e^{-2\sigma}=(T-t)/T$, multiplying~\eqref{exp:curve-rate} by $\sqrt{2(T-t)}$ gives~\eqref{exp:original-time-rate}.
\end{proof}

\begin{remark}\label{exp:transverse-improvement}
In the fixed limiting-plane gauge, the transverse estimate improves further to $\|Z(\sigma)\|_{C^m}\leq C_m e^{-3\sigma}$. Indeed,~\eqref{exp:final-graph-rate} gives $\|\alpha-1\|_\infty\leq Ce^{-2\sigma}$, and~\eqref{exp:Q-dissipation} then implies $\mathcal Q'\leq-6(1-Ce^{-2\sigma})\mathcal Q$. The same integrable-error comparison and Step~2 give the claim.
\end{remark}

\bibliographystyle{alpha}
\bibliography{2}
\end{document}